\documentclass[11pt]{amsart}
\usepackage{amssymb}
\usepackage{amscd}
\usepackage{amsmath}
\usepackage[all]{xy}
\usepackage{mathrsfs}
 \usepackage{color}

\usepackage[dvipdfmx]{hyperref}

\usepackage{bm}

\calclayout

\theoremstyle{plain}
\newtheorem{theorem}{Theorem}[section]
\newtheorem{theorem*}{Theorem}
\newtheorem{proposition}[theorem]{Proposition}
\newtheorem{lemma}[theorem]{Lemma}
\newtheorem{corollary}[theorem]{Corollary}

\theoremstyle{remark}
\newtheorem{remark}[theorem]{Remark}

\theoremstyle{definition}
\newtheorem{definition}[theorem]{Definition}
\newtheorem{example}[theorem]{Example}
\newtheorem{hypothesis}[theorem]{Hypothesis}

\newtheorem*{acknowledgments}{Acknowledgements}

  1

\DeclareMathOperator{\Gal}{Gal}

\DeclareMathOperator{\Hom}{Hom}

\DeclareMathOperator{\im}{im}
\DeclareMathOperator{\coker}{coker}

\newcommand{\TT}{\mathbb{T}}

\newcommand{\QQ}{\mathbb{Q}}

\newcommand{\cF}{\mathcal{F}}

\newcommand{\cI}{\mathcal{I}}

\newcommand{\cL}{\mathcal{L}}

\newcommand{\cN}{\mathcal{N}}
\newcommand{\cO}{\mathcal{O}}
\newcommand{\cP}{\mathcal{P}}
\newcommand{\cQ}{\mathcal{Q}}
\newcommand{\cR}{\mathcal{R}}
\newcommand{\cS}{\mathcal{S}}
\newcommand{\cT}{\mathcal{T}}

\newcommand{\fd}{\mathfrak{d}}

\newcommand{\fq}{\mathfrak{q}}

\newcommand{\fn}{\mathfrak{n}}
\newcommand{\fm}{\mathfrak{m}}
\newcommand{\fz}{\mathfrak{z}}

\newcommand{\FF}{\mathbb{F}}

\newcommand{\NN}{\mathbb{N}}

\newcommand{\ZZ}{\mathbb{Z}}

\newcommand{\sF}{\mathscr{F}}

\newcommand{\rgamma}{\mathbf{R}\Gamma}
\newcommand{\rhom}{\mathbf{R}\Hom}
\newcommand{\lotimes}{\otimes^{\mathbf{L}}}

\makeatletter
 
  \@addtoreset{equation}{subsection}
\makeatother

\begin{document}

\title{On Selmer complexes, Stark systems and derived $p$-adic heights}

\author{Daniel Macias Castillo and Takamichi Sano}

\begin{abstract}
We develop the theory of Nekov\'a\v{r}'s Selmer complexes. 
We prove that, under mild hypotheses, Nekov\'a\v{r}'s Selmer complexes are canonically quasi-isomorphic to ``Poitou-Tate complexes", which arise from Poitou-Tate global duality exact sequences. We give two applications. Firstly, we prove that the determinant of a Selmer complex is canonically isomorphic to the module of Stark systems and, by using this result, we construct a canonical ``Heegner point Stark system" which controls Selmer groups. Secondly, we prove that the derived $p$-adic height pairing of Bertolini-Darmon concides with that of Nekov\'a\v{r}. 
\end{abstract}

\address{Instituto de Ciencias Matem\'aticas, Calle Nicol\'as Cabrera 13-15, Campus de Cantoblanco, 28049 Madrid (Spain)}
\email{daniel.macias@icmat.es}

\address{Osaka Metropolitan University,
Department of Mathematics,
3-3-138 Sugimoto\\Sumiyoshi-ku\\Osaka\\558-8585,
Japan}
\email{tsano@omu.ac.jp}

\maketitle

\tableofcontents

\section{Introduction}

\subsection{Background}

Selmer complexes, introduced by Nekov\'a\v{r} \cite{nekovar}, unify various operations in Iwasawa theory in a systematic way. For example, Mazur's control theorem is interpreted by the language of Selmer complexes in a neat way (see \cite[\S 8.10]{nekovar}). Cassels-Tate pairings and $p$-adic height pairings are also treated systematically by using Selmer complexes and homological algebra (see \cite[\S\S 10 and 11]{nekovar}). In particular, a formalism of ``higher height pairings" arises naturally from this approach (see \cite[\S 11.5]{nekovar}).

Furthermore, as implicitly observed in \cite[Footnote (8) on page 28]{nekovar}, Selmer complexes provide a unified framework for formulating Iwasawa main conjectures. To give an example of the importance of this observation we note that, by using natural formulations of anticyclotomic main conjectures in terms of Selmer complexes (as in \cite{ks}), the second author \cite{sanoderived} is able to establish new descent results. Note also that, in the recent work \cite{cs}, such a reformulation of Iwasawa main conjectures in terms of Selmer complexes  plays a key role in proving new cases of the ``refined Kurihara/Kolyvagin conjectures".

Moreover, it has become clear in the last few years that Selmer complexes are essential to the formulation and study of equivariant leading term conjectures. Indeed, from an equivariant perspective, ``finite-support cohomology complexes" (whose cohomology modules canonically identify with classical or Bloch-Kato Selmer groups and are themselves important examples of Selmer complexes) are not always ``perfect" (i.e., isomorphic in the derived category to a bounded complex of finitely generated, projective modules) and thus need to be approximated by more general Selmer complexes. See for instance the discussion in \S 2 of  \cite{BMC}. In loc. cit., Selmer complexes for abelian varieties defined over number fields are used to formulate and study a general equivariant refinement of the Birch and Swinnerton-Dyer conjecture. In \cite{BB}, a theory of Euler systems that are valued in the cohomology groups of Selmer complexes of $p$-adic representations (rather than just in their Galois cohomology groups) is developed, and allows the authors to prove important new cases of the equivariant Tamagawa number conjecture. In \cite{BM}, the idea of approximating finite-support cohomology complexes by more general Selmer complexes plays a key role in the proof of a substantial part of the ``refined conjectures of the Birch and Swinnerton-Dyer type" of Mazur and Tate. 


In this article, we aim to develop the theory of Selmer complexes. More precisely, we study the following topics. 
\begin{itemize}
\item[(a)] Comparison between Selmer complexes and ``Poitou-Tate complexes" (which naturally arise from Poitou-Tate global duality exact sequences).
\item[(b)] Relation between Selmer complexes and ``Stark systems". 
\item[(c)] Comparison between the ``derived $p$-adic height pairing" of Bertolini-Darmon \cite{BDMT}, \cite{BD der} and the ``higher height pairing" of Nekov\'a\v{r} \cite[\S 11.5]{nekovar}.
\end{itemize}
We shall explain details of these topics in \S\S \ref{s2}, \ref{s3} and \ref{s4} respectively. 

\subsection{Selmer and Poitou-Tate complexes}\label{s2}

Let us briefly recall the definition of Selmer complexes. Let $p$ be a prime number. For simplicity, we assume that $p$ is odd throughout this article. Let $K$ be a number field. Let $S_p$ and $S_\infty$ be the sets of $p$-adic and infinite places of $K$ respectively. Fix a finite set $S$ of $K$ which contains $S_p\cup S_\infty$ and set $S_f:=S\setminus S_\infty$. Let $G_{K,S}$ be the Galois group of the maximal extension $K_S/K$ unramified outside $S$. Let $T$ be a free $\ZZ_p$-module of finite rank endowed with a continuous linear action of $G_{K,S}$. A ``Greenberg local condition"
$$\sF=(\sF_v)_{v\in S_p}$$
is a collection of exact sequences of $G_{K_v}$-representations:
$$\sF_v:0\to T_v^+\to T\to T_v^-\to 0.$$
Nekov\'a\v{r}'s Selmer complex attached to $\sF$ is defined by
$$C_{\rm Nek}:=\widetilde \rgamma_\sF(K,T) := {\rm Cone}\left(\rgamma(G_{K,S},T)\to \bigoplus_{v\in S_p}\rgamma(K_v,T_v^-)\oplus \bigoplus_{v\in S_f\setminus S_p}\rgamma_{/{\rm ur}}(K_v,T)\right)[-1].$$
(See Definition \ref{def sel}.) 

On the other hand, we can also consider Selmer groups defined in a classical way. It is convenient to consider Selmer structures in the sense of Mazur-Rubin \cite{MRkoly}. 
We fix a positive integer $n$ and set
$$A=A_n:=T/p^n. $$
(We often abbreviate $M/p^nM$ to $M/p^n$ for any abelian group $M$.) 
For the Greenberg local condition $\sF$, we naturally define a Selmer structure $\cF$ on $A$ by
$$H^1_\cF(K_v,A):=\begin{cases}
\im(H^1(K_v,A_v^+) \to H^1(K_v,A)) &\text{if $v\in S_p$,}\\
H^1_f(K_v,A) &\text{if $v\notin S_p$},
\end{cases}$$
where we set $A_v^+:=T_v^+/p^n $ and $H^1_f(K_v,A)$ denotes the Bloch-Kato local condition as in \cite[\S 1.3]{R}. The Selmer group for $\cF$ is defined by 
$$H^1_\cF(K,A):=\ker\left(H^1(G_{K,S},A)\to \bigoplus_{v\in S_f}H^1_{/\cF}(K_v,A)\right).$$
One can also consider the dual Selmer structure $\cF^\ast$ on the Kummer dual $A^\ast(1)$ of $A$, and the corresponding Selmer group $H^1_{\cF^\ast}(K,A^\ast(1))$. 

One naturally expects that $H^1_\cF(K,A)$ is related with $H^1$ of $C_{\rm Nek}/p^n:=C_{\rm Nek}\lotimes_\ZZ \ZZ/p^n$. In general, these do not coincide. However, under mild hypotheses, we have a canonical map
$$\kappa_1: H^1(C_{\rm Nek})/p^n \to H^1_\cF(K,A)$$
and a canonical exact sequence
$$0\to H^1(C_{\rm Nek})/p^n \xrightarrow{q_1} H^1(C_{\rm Nek}/p^n)\to H^2(C_{\rm Nek})[p^n]\to 0.$$
Moreover, we have canonical isomorphisms
$$\kappa_2: H^2(C_{\rm Nek})/p^n \xrightarrow{\sim} H^1_{\cF^\ast}(K,A^\ast(1))^\ast$$
and 
$$q_2: H^2(C_{\rm Nek})/p^n\xrightarrow{\sim} H^2(C_{\rm Nek}/p^n).$$
(See Lemmas \ref{lem Cnek} and \ref{lem NekPT}.)

The first aim of this article is to compare Nekov\'a\v{r}'s Selmer complexes with classical Selmer groups ``on the level of complexes". To do this, we introduce the notion of ``Poitou-Tate complexes". The definition is as follows. In the theory of Mazur-Rubin \cite{MRkoly}, it is important to consider a ``core vertex" $\fn$ and the Poitou-Tate global duality exact sequence
$$0\to H^1_\cF(K,A) \to H^1_{\cF^\fn}(K,A) \xrightarrow{\lambda_\fn} \bigoplus_{v\mid \fn}H^1_{/\cF}(K_v,A)\to H^1_{\cF^\ast}(K,A^\ast(1))^\ast\to 0,$$
where $\cF^\fn$ is the Selmer structure obtained from $\cF$ by ``relaxing" local conditions at $v\mid \fn$ and $\lambda_\fn$ is induced by canonical localization maps. 
We define the Poitou-Tate complex by
$$C_{{\rm PT}}:=\left[H^1_{\cF^\fn}(K,A) \xrightarrow{\lambda_\fn} \bigoplus_{v\mid \fn}H^1_{/\cF}(K_v,A)\right].$$
One checks that, in the derived category, this complex is independent of the choice of a core vertex $\fn$. (See Definition \ref{def PT} and Propositions \ref{prop change S} and \ref{prop core} for details.) By definition, we have
$$H^1(C_{\rm PT})=H^1_\cF(K,A) \text{ and }H^2(C_{\rm PT})=H^1_{\cF^\ast}(K,A^\ast(1))^\ast.$$

We have the following comparison result. 

\begin{theorem}[See Theorem \ref{qithm}]\label{thm1}
Under mild hypotheses, there exists a canonical quasi-isomorphism
$$\varphi: C_{\rm PT} \to C_{\rm Nek}/p^n$$
such that the following diagrams are commutative:
$$\xymatrix{
 H^1(C_{\rm Nek})/p^n \ar[dr]^{q_1} \ar[d]_{\kappa_1}& \\
H^1_\cF(K,A)=H^1(C_{\rm PT}) \ar[r]_-{H^1(\varphi)} & H^1(C_{\rm Nek}/p^n),
} 
$$
$$\xymatrix{
 H^2(C_{\rm Nek})/p^n \ar[rd]^{q_2} \ar[d]_{-\kappa_2}& \\
H^1_{\cF^\ast}(K,A^\ast(1))^\ast = H^2(C_{\rm PT})  \ar[r]_-{H^2(\varphi)}  & H^2(C_{\rm Nek}/p^n) .
}$$
\end{theorem}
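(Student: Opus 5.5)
We would construct $\varphi$ explicitly at the level of cochains, using the cone description of $C_{\rm Nek}/p^n$. Reducing the defining triangle modulo $p^n$ identifies $C_{\rm Nek}/p^n$ with
$$\widetilde\rgamma_\sF(K,A)={\rm Cone}\Bigl(\rgamma(G_{K,S},A)\to \bigoplus_{v\in S_p}\rgamma(K_v,A_v^-)\oplus\bigoplus_{v\in S_f\setminus S_p}\rgamma_{/{\rm ur}}(K_v,A)\Bigr)[-1].$$
A degree-one cocycle of this complex is a pair $(z,(h_v)_v)$. Here $z$ is a $1$-cocycle on $G_{K,S}$ with values in $A$, and each $h_v$ is a local $0$-cochain with $dh_v$ equal to the image of $z$ in the local complex.

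First, we enlarge $S$ to contain the primes dividing the core vertex $\fn$. By Propositions \ref{prop change S} and \ref{prop core}, $C_{\rm PT}$ does not depend on $\fn$ or $S$ up to canonical quasi-isomorphism. We therefore take $\fn$ as the product of the primes in $S_f\setminus S_p$ that are relevant, together with auxiliary core primes. The analogous invariance of $C_{\rm Nek}$ under enlarging $S$ follows because the unramified condition at a prime $v\nmid p$ where $T$ is unramified has acyclic $\rgamma_{/{\rm ur}}$.

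Next, we define $\varphi$ in degree $1$ by choosing a cocycle representative $z_c$ for each class $c\in H^1_{\cF^\fn}(K,A)$. The complex $C_{\rm PT}$ is a complex of finite free $\ZZ/p^n$-modules only when $\fn$ is core, and we will use exactly that. Under the hypotheses we expect $H^1_{\cF^\fn}(K,A)$ to be free, so a basis can be chosen. For $v\nmid\fn$ the local class lies in $\cF_v$, so we pick $h_v$ trivializing it. For $v\mid\fn$ we send $c$ to the pair $(z_c,(h_v))$ with $h_v=0$. In degree $2$ we send $\bigoplus_{v\mid\fn}H^1_{/\cF}(K_v,A)$ into the degree-$2$ part, placing a representing local $1$-cochain in the local summand. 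The discrepancy at $v\mid\fn$ is then exactly $\lambda_\fn(c)$, so $\varphi$ is a chain map up to this sign convention, which is the source of the $-\kappa_2$ in the statement. Independence from the choices holds up to chain homotopy, since different choices differ by coboundaries.

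To prove that $\varphi$ is a quasi-isomorphism, we compare long exact sequences. On $H^1$, a class of $C_{\rm Nek}/p^n$ is a global class with local trivializations. Using the mild hypotheses (no $H^0$ of $A$ or $A^\ast(1)$, and $H^0(K_v,A_v^-)=0$), the local $H^0$ terms vanish, so $H^1(C_{\rm Nek}/p^n)=H^1_\cF(K,A)$ and $H^1(\varphi)$ is an isomorphism. For $H^2$, we compare the Poitou–Tate sequence defining $C_{\rm PT}$ with the long exact sequence of the cone, together with Tate duality identifying the $H^2$ terms. Since $\fn$ is core, both $H^2$ groups are identified with $H^1_{\cF^\ast}(K,A^\ast(1))^\ast$ via maps compatible with $H^2(\varphi)$, and we conclude with the five lemma. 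The commutativity of the triangles then reduces to unwinding the definitions of $\kappa_1,\kappa_2,q_1,q_2$ from Lemmas \ref{lem Cnek} and \ref{lem NekPT}. The sign in $-\kappa_2$ comes from the cone convention.

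We expect the main obstacle to be the $H^2$ comparison. It requires showing that the global $H^2(G_{K,S},A)$ contributes nothing beyond the dual Selmer group once the $\fn$-relaxed conditions are imposed. This means checking that the Poitou–Tate connecting map matches the cone differential up to sign, and that the core property kills $H^1_{(\cF^\fn)^\ast}(K,A^\ast(1))$.
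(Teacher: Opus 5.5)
\textbf{Verdict: genuine gap.} Your plan leaves the second commutative diagram, with its sign, unproved. That diagram is part of the statement, and your $H^2$ step also relies on it.

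\textbf{What is correct.} Your cochain-level construction is essentially the paper's $\varphi^1,\varphi^2$: global $1$-cocycles in degree one, and local $1$-cochains entering with the sign forced by the cone differential in degree two. Your degree-one argument is sound and more direct than the paper's. The paper proves $\varphi^1$ injective and then compares orders via Lemma \ref{lem NekPT}(iii), which needs Hypothesis \ref{hyp core} and Euler characteristics. You instead read off $H^1(C_{\rm Nek}/p^n)\simeq H^1_\cF(K,A_n)$ directly from the cone sequence. To make this work you need three things you did not mention:
\begin{itemize}
\item At $v\in S_f\setminus S_p$, the vanishing of the local $H^0$ uses torsion-freeness of $H^1_{/{\rm ur}}(K_v,T)$ (Lemma \ref{tflemma}(ii)), hence Hypothesis \ref{hyp tech}(iv), not only vanishing of $H^0$'s.
\item The local condition must be $\rgamma_{\rm ur}(K_v,T)\lotimes\ZZ/p^n$, not one built from $A_n^{I_v}$.
\item Identifying the local kernel with $H^1_f(K_v,A_n)$ uses $H^1_f(K_v,A_n)=\im H^1_f(K_v,T)$.
\end{itemize}
Separately, your claim that $\rgamma_{/{\rm ur}}(K_v,T)$ is acyclic at $v\nmid p$ where $T$ is unramified is false: at a core prime $\fq$ one has $H^1_{/\cF}(K_\fq,A_n)\simeq\cR_n\neq 0$. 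The $S$-independence of $C_{\rm Nek}$ you want does hold, but for a different reason (Nekov\'a\v{r}, Prop.~7.8.8(ii)).

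\textbf{The gap.} Your ``five lemma'' step presupposes exactly the compatibility of the two identifications with $H^1_{\cF^\ast}(K,A_n^\ast(1))^\ast$, i.e.\ the second diagram. This is not an unwinding of definitions. The map $\kappa_2$ comes from Nekov\'a\v{r}'s duality $\widetilde H^2_\sF(K,T)\simeq\widetilde H^1_{\sF^\ast}(K,T^\vee(1))^\vee$ for Selmer complexes. The identification $H^2(C_{\rm PT})\simeq H^1_{\cF^\ast}(K,A_n^\ast(1))^\ast$ is instead the Poitou--Tate map $\nu=\Lambda^\ast\circ\overline u$, built from local Tate duality. The cone convention explains the minus sign inside $\varphi^2$, but it says nothing about how these two dualities compare. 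So it cannot yield $\kappa_2\circ q_2^{-1}\circ H^2(\varphi)=-\nu$.

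The paper bridges them through compactly supported cohomology:
\begin{itemize}
\item It uses the triangle $\rgamma_c(G_{K,S},T)\xrightarrow{\alpha}C_{\rm Nek}\to\bigoplus_{v\in S_p}\rgamma(K_v,T_v^+)\oplus\bigoplus_{v\in S_f\setminus S_p}\rgamma_{\rm ur}(K_v,T)$, with $H^2(\alpha)$ surjective because $H^2(K_v,T_v^+)=0$.
\item It uses Nekov\'a\v{r}'s compatibility (Prop.~6.3.3) of the Selmer duality with $\sigma:H^2_c(G_{K,S},T)\simeq H^1(G_{K,S},T^\vee(1))^\vee$.
\item It uses the morphism of duality triangles giving $\sigma_n\circ\delta=\Lambda^\ast\circ u$.
\end{itemize}
Only then does the explicit cochain computation show that the relevant composite is $-1$ times the map induced by $\delta$. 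This gives the diagram with its sign.

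\textbf{A partial fix.} If you only want bijectivity of $H^2(\varphi)$, your last paragraph can be turned into an independent argument. Work with $C_{{\rm PT},S}$, where $S$ contains the primes dividing $\fn$.
\begin{itemize}
\item The cokernel of $H^2(\varphi)$ injects into $\ker\bigl(H^2(G_{K,S},A_n)\to\bigoplus_{v\in S_f}H^2(K_v,A_n)\bigr)$.
\item By Poitou--Tate, this kernel is dual to $\ker\bigl(H^1(G_{K,S},A_n^\ast(1))\to\bigoplus_{v\in S}H^1(K_v,A_n^\ast(1))\bigr)$, which lies in $H^1_{(\cF^\fn)^\ast}(K,A_n^\ast(1))=0$.
\item The orders of $H^2(C_{{\rm PT},S})$ and $H^2(C_{\rm Nek}/p^n)$ agree by Lemmas \ref{lem NekPT}(ii) and \ref{lem Cnek}(ii).
\end{itemize}
This still leaves the sign-sensitive second diagram unproved. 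That diagram is part of the statement, and it is what Theorem \ref{main} depends on.
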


\begin{remark}If $T$ and $\sF$ are endowed with the action of a local Gorenstein order $\cR$, then $\varphi$ is $\cR$-equivariant.
\end{remark}


\begin{remark}
Theorem \ref{thm1} was partially proved by Burns and the first author in \cite[Lemma 10.7]{BMC} in the setting of Tate modules of abelian varieties (with the ordinary Greenberg local conditions). Although $\varphi$ specializes to recover the same morphism that was constructed in loc. cit., the latter was not shown to be a quasi-isomorphism, so the claim of Theorem \ref{thm1} is stronger.
\end{remark}

\begin{remark}
A similar result to Theorem \ref{thm1} is proved by Kataoka for the complex $\rgamma(G_{K,S},A)$ (see \cite[Proposition 5.8]{kataoka} and \cite[Proposition 3.23]{ks}). Our method follows \cite{BMC} and is different from \cite{kataoka}. Moreover, Theorem \ref{thm1} is more accurate than Kataoka's result, since we deal with the sign appearing in the second diagram. (This sign is important in the comparison of derived heights: see Theorem \ref{thm4} below.)
\end{remark}


\subsection{Stark systems}\label{s3}

The theory of Stark systems was introduced by Mazur-Rubin \cite{MRselmer} and developed, for example, in \cite{sakamoto}, \cite{sbA}, \cite{bss1}, \cite{bss2}, \cite{kataoka}, \cite{ks}. As an application of Theorem \ref{thm1}, we relate Selmer complexes with Stark systems. 

\subsubsection{The general theory}

For a non-negative integer $r$, let ${\rm SS}_r(A,\cF)$ be the module of Stark systems for $A$ and $\cF$ (see Definition \ref{def stark}). Let $\chi(\cF)$ denote the core rank of $\cF$. (The notion of core rank was introduced by Mazur-Rubin \cite{MRkoly}, but we use a slightly modified definition. See Definition \ref{def core}.) Then it is not difficult to show that there is a canonical isomorphism
$${\det}_{\ZZ/p^n}^{-1}(C_{\rm PT})\simeq {\rm SS}_{\chi(\cF)}(A,\cF).$$
(See Proposition \ref{prop stark}.) Combining this with Theorem \ref{thm1}, we obtain the following. 

\begin{theorem}[See Theorem \ref{thm stark}]\label{thm2}
Under mild hypotheses, there is a canonical isomorphism
$$\varpi_n: {\det}_{\ZZ_p}^{-1}(C_{\rm Nek})/p^n \xrightarrow{\sim} {\rm SS}_{\chi(\cF)}(A,\cF).$$
\end{theorem}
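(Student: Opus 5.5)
The isomorphism $\varpi_n$ will be built as a composite of three canonical isomorphisms: base change for determinants, the quasi-isomorphism $\varphi$ of Theorem \ref{thm1}, and the identification of ${\det}^{-1}(C_{\rm PT})$ with Stark systems (Proposition \ref{prop stark}).

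First I check that $C_{\rm Nek}$ is a perfect complex of $\ZZ_p$-modules, concentrated in degrees $[0,3]$ or smaller under the mild hypotheses. This should follow from Nekov\'a\v{r}'s results on Selmer complexes with Greenberg local conditions, since $\rgamma(G_{K,S},T)$ and the local complexes are perfect; so ${\det}_{\ZZ_p}(C_{\rm Nek})$ makes sense. Determinants commute with derived base change, which gives a canonical isomorphism
$${\det}_{\ZZ_p}^{-1}(C_{\rm Nek})\otimes_{\ZZ_p}\ZZ/p^n \simeq {\det}_{\ZZ/p^n}^{-1}(C_{\rm Nek}\lotimes_{\ZZ_p}\ZZ/p^n)={\det}_{\ZZ/p^n}^{-1}(C_{\rm Nek}/p^n).$$

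Next, $C_{\rm PT}$ is a two-term complex of $\ZZ/p^n$-modules. When $\fn$ is a core vertex, both terms $H^1_{\cF^\fn}(K,A)$ and $\bigoplus_{v\mid\fn}H^1_{/\cF}(K_v,A)$ should be free over $\ZZ/p^n$; this is part of the Mazur--Rubin core vertex theory used in Definition \ref{def PT}. So $C_{\rm PT}$ is perfect and its determinant is defined. The quasi-isomorphism $\varphi$ of Theorem \ref{thm1} then induces a canonical isomorphism ${\det}^{-1}(C_{\rm Nek}/p^n)\simeq {\det}^{-1}(C_{\rm PT})$. Composing with the isomorphism ${\det}^{-1}_{\ZZ/p^n}(C_{\rm PT})\simeq {\rm SS}_{\chi(\cF)}(A,\cF)$ of Proposition \ref{prop stark} yields $\varpi_n$.

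The main obstacle is \textbf{canonicity}. The isomorphism of Proposition \ref{prop stark} is defined using a choice of core vertex $\fn$, whereas $C_{\rm PT}$ is only independent of $\fn$ in the derived category (Propositions \ref{prop change S} and \ref{prop core}). One must therefore check the following compatibility for core vertices $\fn\mid\fm$. The quasi-isomorphism between the two Poitou--Tate complexes, used to define $C_{\rm PT}$ up to canonical isomorphism, induces on determinants exactly the transition maps $\Phi_{\fm,\fn}$ in the inverse system defining ${\rm SS}_{\chi(\cF)}(A,\cF)$. The comparison map between the two complexes is the inclusion $H^1_{\cF^\fn}\subset H^1_{\cF^\fm}$ together with the inclusion of local terms, and its cokernel is the acyclic complex $\bigoplus_{v\mid \fm/\fn}[H^1_{/\cF}(K_v,A)\xrightarrow{=} \cdots]$. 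I would verify that the induced determinant isomorphism agrees with the Stark-system transition map, up to the standard sign conventions in exterior powers. I would also check that the construction of $\varphi$ in Theorem \ref{thm1} is compatible with changing $\fn$, so that $\varphi$ is well defined in the derived category.

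Once these compatibilities are in place, $\varpi_n$ is independent of all choices. It is also compatible with change of $n$, and with the $\cR$-action when present.
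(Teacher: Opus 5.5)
Your proposal is correct and matches the paper's argument: Theorem \ref{thm stark} is obtained by composing the base-change isomorphism ${\det}^{-1}_{\cR}(C_{\rm Nek})/\pi^n\simeq{\det}^{-1}_{\cR_n}(C_{\rm Nek}/\pi^n)$, the isomorphism on determinants induced by the quasi-isomorphism $\varphi$ of Theorem \ref{qithm}, and the isomorphism of Proposition \ref{prop stark}. The core-vertex compatibilities you flag as the main obstacle are not extra work at this stage, since they are the inputs you are already citing: the well-definedness of $\psi$ in the proof of Proposition \ref{prop stark} (which the paper calls an easy check), together with Propositions \ref{prop change S} and \ref{prop core}(ii).
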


This result is a generalization of Kataoka's result \cite[Theorem 5.6]{kataoka}.

Note that, by the general theory of Stark systems in \cite{bss1}, we know the following: for a basis $\epsilon \in {\rm SS}_{\chi(\cF)}(A,\cF)$ and $i \geq 0$, we can define an ideal $I_i(\epsilon)$ of $\ZZ/p^n$, and we have
\begin{equation}\label{eq fitt}
{\rm Fitt}_{\ZZ/p^n}^i (H^1_{\cF^\ast}(K,A^\ast(1))^\ast) = I_i(\epsilon).
\end{equation}
Here ${\rm Fitt}^i$ denotes the $i$-th Fitting ideal. Using this fact and Theorem \ref{thm2}, we can show that any basis of ${\det}_{\ZZ_p}^{-1}(C_{\rm Nek})$ determines all higher Fitting ideals of $H^1_{\cF^\ast}(K,A^\ast(1))^\ast$. (See Corollary \ref{cor stark}.) 

\begin{remark}\label{equivrk} If $T$ and $\sF$ are endowed with the action of a local Gorenstein order $\cR$, then we actually prove equivariant generalizations of Theorem \ref{thm2} and  (\ref{eq fitt}) pertaining to the determinant ${\det}_{\cR}^{-1}(C_{\rm Nek})$ and the Fitting ideals ${\rm Fitt}_{\cR/p^n}^i (H^1_{\cF^\ast}(K,A^\ast(1))^\ast)$.\end{remark}

\subsubsection{The Heegner point Stark system}

We give an application of Theorem \ref{thm2} to the Heegner point setting. We construct a canonical Stark system by using Heegner points. 

Let $K$ be an imaginary quadratic field. Let $E$ be an elliptic curve defined over $\QQ$. We assume that $E$ has good ordinary reduction at $p$ and every prime divisor of the conductor of $E$ splits in $K$ (Heegner hypothesis). Let $K_\infty/K$ be the anticyclotomic $\ZZ_p$-extension. We set $\Gamma:=\Gal(K_\infty/K)$ and $\Lambda:=\ZZ_p[[\Gamma]]$. 
For a non-negative integer $m$, let $K_m$ be the $m$-th layer of $K_\infty/K$ and set $\Gamma_m:=\Gal(K_m/K)$. 
Let $T$ be the $p$-adic Tate module of $E$ and consider its anticyclotomic deformation $\TT:=T\otimes_{\ZZ_p}\Lambda$. In this setting, we have a natural ``ordinary" Greenberg local condition $\sF$ (see Example \ref{ex greenberg}) and we can consider the corresponding $\Lambda$-adic Selmer complex $\widetilde \rgamma_\sF(K,\TT)$. Under mild hypotheses, the ``Heegner point main conjecture" is proved in \cite{BCK}, and we can construct a canonical $\Lambda$-basis
$$\fz_\infty^{\rm Hg} \in {\det}_{\Lambda}^{-1}(\widetilde \rgamma_\sF(K,\TT))$$
by using Heegner points (see Theorem \ref{HIMC}). Fix $m\geq 0$ and let
$$\fz_m^{\rm Hg} \in {\det}_{\ZZ_p[\Gamma_m]}^{-1}(\widetilde \rgamma_\sF(K,{\rm Ind}_{K_m/K}(T)))$$
be the image of $\fz_\infty^{\rm Hg}$ under the natural surjection
$${\det}_\Lambda^{-1}(\widetilde \rgamma_\sF(K,\TT))\twoheadrightarrow {\det}_{\Lambda}^{-1}(\widetilde \rgamma_\sF(K,\TT))\otimes_\Lambda \ZZ_p[\Gamma_m]\simeq {\det}_{\ZZ_p[\Gamma_m]}^{-1}(\widetilde \rgamma_\sF(K,{\rm Ind}_{K_m/K}(T))). $$
(The last isomorphism is due to the ``control theorem": see \cite[Proposition 8.10.1]{nekovar}.) 
For a positive integer $n$, we set 
$$\cR_{n,m}:=\ZZ/p^n[\Gamma_m] \text{ and }A_{n,m}:={\rm Ind}_{K_m/K}(T/p^n).$$
We then define the Heegner point Stark system for $A_{n,m}$ by
$$\epsilon_{n,m}^{\rm Hg}:=\varpi_{n,m} (\fz_m^{\rm Hg}) \in {\rm SS}_{\chi(\cF)}(A_{n,m},\cF),$$
where $\varpi_{n,m}$ is the canonical isomorphism as in Theorem \ref{thm2} but with $\ZZ_p$ and $T$ replaced by $\ZZ_p[\Gamma_m]$ and ${\rm Ind}_{K_m/K}(T)$ respectively, as allowed by Remark \ref{equivrk} (see also Definition \ref{def heeg stark}). It is worth noting that {\it the core rank $\chi(\cF)$ is zero} in this setting (see Lemma \ref{lem hyp}).

Applying the general result (\ref{eq fitt}) (or rather its generalization alluded to in Remark \ref{equivrk}) to the Heegner point Stark system $\epsilon_{n,m}^{\rm Hg}$, we obtain the following. 

\begin{theorem}[See Theorem \ref{thm heeg}]\label{thm3}
Under mild hypotheses we have
$${\rm Fitt}_{\cR_{n,m}}^i({\rm Sel}_{p^n}(E/K_m)^\ast) = I_i(\epsilon_{n,m}^{\rm Hg})$$
for any $n\geq 1$, $m\geq 0$ and $i\geq 0$. Here ${\rm Sel}_{p^n}(E/K_m)^\ast$ denotes the Pontryagin dual of the $p^n$-Selmer group for $E$ over $K_m$. 
\end{theorem}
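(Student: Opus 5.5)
The plan is to apply the equivariant version of (\ref{eq fitt}) (Remark \ref{equivrk}) to the Heegner point Stark system $\epsilon_{n,m}^{\rm Hg}$, and then to identify the Selmer group it computes with ${\rm Sel}_{p^n}(E/K_m)$. First, over $\cR=\ZZ_p[\Gamma_m]$ (a local Gorenstein order, since $\Gamma_m$ is a $p$-group) and $A_{n,m}={\rm Ind}_{K_m/K}(T/p^n)$, the equivariant Theorem \ref{thm2} gives an isomorphism $\varpi_{n,m}$. Since $\fz_\infty^{\rm Hg}$ is a $\Lambda$-basis (Theorem \ref{HIMC}), its image $\fz_m^{\rm Hg}$ is a $\ZZ_p[\Gamma_m]$-basis of ${\det}^{-1}_{\ZZ_p[\Gamma_m]}(\widetilde\rgamma_\sF(K,{\rm Ind}_{K_m/K}(T)))$. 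This uses the control isomorphism \cite[Proposition 8.10.1]{nekovar}, which I take from the paper. Consequently $\epsilon_{n,m}^{\rm Hg}$ is an $\cR_{n,m}$-basis of ${\rm SS}_{\chi(\cF)}(A_{n,m},\cF)$, and by Lemma \ref{lem hyp} we have $\chi(\cF)=0$. The equivariant form of (\ref{eq fitt}) from \cite{bss1} then yields
$${\rm Fitt}^i_{\cR_{n,m}}\bigl(H^1_{\cF^\ast}(K,A_{n,m}^\ast(1))^\ast\bigr)=I_i(\epsilon^{\rm Hg}_{n,m}).$$

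The remaining task is to identify $H^1_{\cF^\ast}(K,A_{n,m}^\ast(1))^\ast$ with ${\rm Sel}_{p^n}(E/K_m)^\ast$ as $\cR_{n,m}$-modules. The Weil pairing gives $T^\ast(1)\simeq T$, hence $A_{n,m}^\ast(1)\simeq {\rm Ind}_{K_m/K}(E[p^n])$, with the $\Gamma_m$-action twisted by the involution $\iota$. Shapiro's lemma then identifies $H^1(K,{\rm Ind}_{K_m/K}E[p^n])$ with $H^1(K_m,E[p^n])$, compatibly with the local decompositions at each $v$. Next I would check that the dual local conditions $\cF^\ast$ correspond to the Kummer-image conditions:
\begin{itemize}
\item at $v\nmid p$ the Bloch--Kato finite condition is self-dual and equals the Kummer image, since $E(K_{m,w})\otimes\ZZ_p$ is finite there and the unramified computation applies;
\item at $v\mid p$ the ordinary Greenberg condition $\im H^1(K_v,A^+)$ is its own orthogonal complement under local Tate duality, and its image matches the Kummer image $E(K_{m,w})/p^n$, using the standard description of the Kummer image for ordinary curves together with triviality of $E(k_w)[p]$-type obstructions (one of the mild hypotheses).
\end{itemize}
The involution $\iota$ does not affect Fitting ideals if $I_i$ is interpreted compatibly; alternatively I would absorb it, as the paper's conventions presumably do, into the identification. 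This gives $H^1_{\cF^\ast}(K,A^\ast_{n,m}(1))\simeq {\rm Sel}_{p^n}(E/K_m)$.

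I expect the main obstacle to be the local comparison at $p$ over the ramified layers $K_m$. There the Greenberg ordinary condition need not coincide exactly with the Kummer image modulo $p^n$ without hypotheses such as $E(\FF_{v})[p]=0$ or non-anomalous reduction. I would first try to impose this as part of the mild hypotheses and verify the equality via the exact sequence $0\to \hat E\to E\to\tilde E\to0$ over $K_{m,w}$. I would also need to check that the Stark-system formalism is insensitive to the extra primes in $S$ (Proposition \ref{prop change S}).
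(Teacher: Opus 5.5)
Your proposal is correct and is essentially the paper's argument. The paper proves Theorem \ref{thm heeg} by applying Corollary \ref{cor stark} to the basis $\epsilon^{\rm Hg}_{n,m}$; this is the equivariant form of (\ref{eq fitt}), with Lemma \ref{lem hyp} supplying a core vertex in $\cN_{n,m}$ and $\chi(\cF)=\chi(\sF)=0$, and Hypothesis \ref{hyp heeg} implying Hypothesis \ref{hyp tech}. It then invokes Proposition \ref{prop canonical selmer}, which identifies ${\rm Sel}_{p^n}(E/K_m)$ with $H^1_{\cF^\ast}(K,A_{n,m}^\ast(1))$ using exactly the local checks you list: the Weil pairing, Greenberg's description of the Kummer image at $v\mid p$ under the non-anomalous hypothesis (Hypothesis \ref{hyp heeg}(iv)), and the standard comparison at $v\nmid p$. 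Your worry about the $\iota$-twist does not arise here, since in the anticyclotomic setting complex conjugation induces an $\iota$-semilinear automorphism of ${\rm Sel}_{p^n}(E/K_m)$, so the relevant Fitting ideals are unchanged.
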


This result can be regarded as an equivariant generalization of Kolyvagin's structure theorem \cite{kolyvagin1}, \cite{kolyvagin2} (see also \cite[Theorem 4.7]{zhang}). 

We remark that it is possible to apply Theorem \ref{thm2} to the cyclotomic setting and give a similar result to Theorem \ref{thm3}, which essentially recovers Kurihara's result \cite[Theorem B]{kurihara} for modular symbols. (See Remark \ref{rem kurihara} for details.) Note that our method is totally different: we construct a Stark system directly by using Theorem \ref{thm2}, and we do not consider Kolyvagin systems. It would also be interesting to compare our method with recent works by Kim \cite{kim} and Angurel  \cite{angurel}.

\subsection{Derived $p$-adic heights}\label{s4}

As the second application of Theorem \ref{thm1}, we prove that the derived $p$-adic height pairing introduced by Bertolini-Darmon \cite{BDMT}, \cite{BD der} coincides with the ``higher height pairing" of Nekov\'a\v{r} \cite[\S 11.5]{nekovar}. 

Let $K_\infty/K$ be any $\ZZ_p$-extension. We set $\Gamma:=\Gal(K_\infty/K) $ and $\Lambda:=\ZZ_p[[\Gamma]]$. Let $I\subset \Lambda$ be the augmentation ideal and set
$$Q^k:=I^k/I^{k+1}.$$
Let $E$ be an elliptic curve defined over $K$. We assume that $E$ has good ordinary reduction at every $v\in S_p$. Let ${\rm Sel}_{p^n}(E/K)$ be the $p^n$-Selmer group for $E/K$ and set
$$S_p(E/K):=\varprojlim_n {\rm Sel}_{p^n}(E/K).$$
Under mild hypotheses, Bertolini-Darmon \cite{BD der} defined a filtration
$$S_p(E/K)=S_p^{(1)}\supset S_p^{(2)}\supset \cdots \supset S_p^{(p)}$$
and derived $p$-adic height pairings
\begin{equation*}
\langle \cdot,\cdot \rangle_k^{\rm BD}: S_p^{(k)}\times S_p^{(k)} \to Q^k
\end{equation*}
for $1\leq k\leq p-1$. When $k=1$, this is the usual $p$-adic height pairing. 

On the other hand, Nekov\'a\v{r} \cite[\S 11.5]{nekovar} defined a similar pairing by a systematic method using spectral sequences. Let $T$ be the $p$-adic Tate module of $E$ and set $\TT:=T\otimes_{\ZZ_p}\Lambda$. Let $\widetilde \rgamma_{\sF}(K,\TT)$ be the Selmer complex for $\TT$ attached to the ordinary Greenberg local condition $\sF$. Nekov\'a\v{r} considered the spectral sequence $E_k^{i,j}$ arising from the $I$-adic filtration on $\widetilde \rgamma_{\sF}(K,\TT)$ and defined a pairing
$$\langle \cdot,\cdot \rangle_k^{\rm Nek}: E_k^{0,1}\times E_k^{0,1} \to Q^k.$$

We have the following comparison result. 

\begin{theorem}[See Theorem \ref{main}]\label{thm4}
For any $1\leq k\leq p-1$, we have $E_k^{0,1}=S_p^{(k)}$ and 
$$\langle \cdot,\cdot \rangle_k^{\rm BD} = -\langle \cdot,\cdot\rangle_k^{\rm Nek}.$$
\end{theorem}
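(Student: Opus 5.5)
The comparison will go through Theorem \ref{thm1}, applied to the torsion representations $A=T/p^n$ and their twists by $\Lambda/I^{k+1}$, so that both filtrations and both pairings can be computed inside Poitou--Tate complexes. Bertolini--Darmon's filtration and pairing are defined via ``derivative classes'' living in the finite layers. Concretely, an element $x\in S_p^{(k)}$ lifts, modulo $p^n$ and after the Kolyvagin-type derivative $D_{k-1}$, to a norm-compatible class in ${\rm Sel}_{p^n}(E/K_m)$ for $p^m\geq p^n$. The pairing $\langle x,y\rangle_k^{\rm BD}$ is then computed by a local-global recipe: lift a derived class to a cochain, take its local image at primes above $p$ in the ordinary quotient, and evaluate via local Tate duality/the Artin map with $y$, giving an element of $I^k/I^{k+1}$. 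Nekov\'a\v{r}'s pairing is defined instead via the Bockstein spectral sequence of $\widetilde\rgamma_\sF(K,\TT)$ for the $I$-adic filtration. Here $E_1^{i,j}=H^{i+j}(\widetilde\rgamma_\sF(K,T))\otimes Q^i$, $d_k$ is the $k$-th higher Bockstein, and the pairing on $E_k^{0,1}$ is $\langle x,y\rangle_k^{\rm Nek}=\langle d_k x, y\rangle$, using the duality $E_k^{k,2-k}$-$E_k^{0,1}$ (Cassels--Tate/Poitou--Tate duality pairing on $H^2$ with $H^1$).

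\textbf{Step 1.} I would first reduce everything to finite level. Both sides are compatible with $n$, so it suffices to compare the mod $p^n$ versions, using $\kappa_1$ and the exact sequences of Lemmas \ref{lem Cnek}, \ref{lem NekPT} to pass from $H^1(C_{\rm Nek})/p^n$ to $H^1_\cF(K,A)$. Using the control theorem \cite[Proposition 8.10.1]{nekovar}, $\widetilde\rgamma_\sF(K,\TT)\lotimes_\Lambda \Lambda/I^{k+1}$ is the Selmer complex of $T\otimes\Lambda/I^{k+1}$. Its reduction mod $p^n$ is then, by Theorem \ref{thm1} (in its equivariant form with $\cR=\ZZ/p^n[\Gamma/\Gamma^{p^m}]$ or with $\Lambda/(I^{k+1},p^n)$), quasi-isomorphic to the Poitou--Tate complex $C_{\rm PT}$ for $A\otimes\Lambda/I^{k+1}$, built from a common core vertex $\fn$.

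\textbf{Step 2.} Next I would compute the Bockstein differentials in the Poitou--Tate model. The filtration $I^j/I^{k+1}$ gives short exact sequences of two-term complexes $[H^1_{\cF^\fn}\to\bigoplus_{v\mid\fn}H^1_{/\cF}]$. Then $d_k x$ is obtained by lifting $x$ to a class in $H^1_{\cF^\fn}(K,A\otimes\Lambda/I^{k+1})$ and applying $\lambda_\fn$, landing in $\bigoplus_{v\mid\fn} H^1_{/\cF}(K_v,A)\otimes Q^k$, whose image is in $H^2(C_{\rm PT})\otimes Q^k=H^1_{\cF^\ast}(K,A^\ast(1))^\ast\otimes Q^k$. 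Identifying $H^1(K,A\otimes\Lambda/I^{k+1})$ with classes over $K_m$ via Shapiro, a class killed by $d_1,\dots,d_{k-1}$ lifts exactly as a ``$k$-th derivative'' class. This should give $E_k^{0,1}=S_p^{(k)}$ by induction on $k$, using $k\le p-1$ so that the divided powers $D_k$ make sense and $Q^k\cong\ZZ_p$ via $(\gamma-1)^k$.

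\textbf{Step 3.} Finally I would match the pairings. The value $\langle d_kx,y\rangle$ is the sum of local Tate pairings of $\lambda_\fn(\tilde x)$ with ${\rm loc}_v(y)$. Bertolini--Darmon's recipe is precisely such a sum of local pairings at the relevant places, so the global reciprocity law lets one move the places from $\fn$ to $S_p$. The sign comes from the $-\kappa_2$ in the second diagram of Theorem \ref{thm1}: Nekov\'a\v{r}'s pairing uses $\kappa_2$ via $q_2$, while the Poitou--Tate computation naturally uses $H^2(\varphi)$, which produces $-\langle\,,\rangle^{\rm Nek}$.

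\textbf{Main obstacle.} I expect the hardest step to be Step 3's sign and normalization bookkeeping. One must check that Nekov\'a\v{r}'s cone and duality conventions, the Bockstein connecting map conventions, and Bertolini--Darmon's derivative operator $D_k$ versus the identification $Q^k\cong I^k/I^{k+1}$ all agree up to exactly the one sign coming from $-\kappa_2$. My first attempt would be to verify it in the case $k=1$, where both pairings reduce to the classical $p$-adic height, and then show that the inductive construction introduces no further signs.
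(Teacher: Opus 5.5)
Your overall architecture agrees with the paper's. You work at level $n$, replace $C/p^n$ by a Poitou--Tate complex via Theorem \ref{thm1}, compute the derived Bockstein there by lifting and applying the differential, and take the single sign from $-\kappa_2$. But Step 3 rests on a wrong description of $\langle\cdot,\cdot\rangle_k^{\rm BD}$, and as planned it does not prove the statement. Bertolini--Darmon do not define their pairing through localizations at $p$ and the Artin map. They fix an admissible set $\Sigma$ of auxiliary primes (prime to $p$ and to the bad primes, split in $L$) and use the sequence (\ref{BD seq}). They set $\langle s,t\rangle_{k,n}^{\rm BD}=\ell(x_s)(y_t)\bmod \cI^{k+1}$, where $x_s\in\bigoplus_{v\in\Sigma}E(L_v)/p^n$ and $y_t\in{\rm Sel}^\Sigma_{p^n}(E/L)$ satisfy $D^{(k-1)}x_s=\tilde s$ and $D^{(k-1)}y_t=\tilde t$, with $(\gamma-1)^{k-1}\tilde s=s$ and $(\gamma-1)^{k-1}\tilde t=t$. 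So your step ``use global reciprocity to move the places from $\fn$ to $S_p$'' aims at a formula that is not the definition. For $k\geq 2$, equating BD's $\Sigma$-construction with a local-at-$p$ expression would be a separate non-trivial comparison, and your proposal does not supply it.

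The missing idea is to choose the core vertex to be $\fn_\Sigma=\prod_{v\in\Sigma}v$. By Lemma \ref{global duality} it is a core vertex of core rank $0$ (Remark \ref{rem adm core}). Moreover, $C_{{\rm PT},\fn_\Sigma}=[{\rm Sel}^\Sigma_{p^n}(E/L)\to\bigoplus_{v\in\Sigma}H^1(L_v,E)[p^n]]$ is, via local Tate duality, exactly the dual of BD's sequence. No reciprocity is then needed.

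The comparison becomes the purely algebraic Theorem \ref{compari} for a two-term complex $[X\xrightarrow{\ell}Y^\ast]$ of free $\ZZ/p^n[G]$-modules, applied to the dual sequence with Remark \ref{rem symm} to swap the two variables. Its proof has two inputs:
\begin{itemize}
\item Proposition \ref{prop relate} gives $\beta^{(k)}\circ\pi=\rho\circ\psi^{(k)}$, where $\psi^{(k)}$ is the connecting map of $0\to\cI^kC/\cI^{k+1}C\to C/\cI^{k+1}C\to C/\cI^kC\to 0$. This is the precise form of your ``lift and apply $\lambda_\fn$''.
\item Lemma \ref{lem spec}(i) identifies $E_k^{0,1}=\im(H^1(C/\cI^kC)\to H^1(C/\cI C))$ with $\cS_0\cap\cI^{k-1}\cS$ directly, via $N=(\gamma-1)^{k-1}D^{(k-1)}$ and Lemma \ref{der lemma}. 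No induction on $k$ is needed, and no Shapiro identification for $A\otimes\Lambda/I^{k+1}$ (which is not an induced module).
\end{itemize}
This algebraic comparison introduces no sign. The only sign is the $-\kappa_2$ in Theorem \ref{thm1}, so the sign bookkeeping you flag as the main obstacle disappears.

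Finally, Theorem \ref{thm1} should be applied with $\cR=\ZZ_p[\Gal(L/K)]$ and ${\rm Ind}_{L/K}(T)$, and the result then reduced mod $p^n$. It needs an $\cO$-order over which the lattice is free, not $\ZZ/p^n[\cdots]$ or $\Lambda/(I^{k+1},p^n)$. You must also check Hypotheses \ref{hyp tech} and \ref{hyp core}. These follow from Hypothesis \ref{hyp} (through $E(L)[p]=0$) and from the admissible set.
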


We remark that Theorem \ref{thm4} is a refinement of the comparison result of Burns and the first author in \cite[Theorem 10.3]{BMC}, in which only the case $k=1$ is considered. 

In the proof of Theorem \ref{thm4}, Theorem \ref{thm1} (which is a generalization of \cite[Lemma 10.7]{BMC}) plays the key role, since the Poitou-Tate exact sequence is essentially used in the construction of $\langle \cdot,\cdot \rangle_k^{\rm BD}$. 
Another key ingredient is to study relations between ``derived" Bockstein maps (as in \cite{sanoderived}) and ``generalized" Bockstein maps considered by Lam-Liu-Sharifi-Wake-Wan \cite{LLSWW}. We show that these are related in a natural commutative diagram (see Proposition \ref{prop relate}). Using this result, we can prove an algebraic comparison result (see Theorem \ref{compari}). Theorem \ref{thm4} is a direct consequence of Theorems \ref{thm1} and \ref{compari}. 

\subsection{Organization}

The organization of this article is as follows. 

In \S \ref{sec nek selmer}, we review the definition and basic properties of Nekov\'a\v{r}'s Selmer complexes. In \S \ref{sec PT}, we study Poitou-Tate complexes and state the main result (see Theorem \ref{qithm}). \S \ref{sec pf qi} is devoted to the proof of Theorem \ref{qithm}. 

In \S \ref{sec stark}, we review the definition of Stark systems and, as an application of Theorem \ref{qithm}, we prove that the determinant of a Selmer complex is canonically isomorphic to the module of Stark systems (see Theorem \ref{thm stark}). In \S \ref{sec heeg}, we apply Theorem \ref{thm stark} to construct a canonical Heegner point Stark system and prove Theorem \ref{thm3} (= Theorem \ref{thm heeg}). 

In \S \ref{sec statement}, we give a precise statement of Theorem \ref{thm4} (= Theorem \ref{main}). In \S \ref{sec review}, we review the definition of the derived $p$-adic height pairing of Bertolini-Darmon. In \S \ref{sec nek}, we review the definition of the derived $p$-adic height pairing of Nekov\'a\v{r}. Finally, in \S \ref{sec final}, we prove Theorem \ref{main}. 

In appendices, we give necessary algebraic ingredients. In Appendix \ref{sec bock}, we study ``derived" and ``generalized" Bockstein maps and compare them. The results in \S \ref{sec coker} are not used in this article: we give another proof of a result in \cite{sanoderived} (see Corollary \ref{cor sano}). In Appendix \ref{sec abs}, we define two pairings $\langle \cdot,\cdot\rangle_k^{\rm BD}$ and $\langle \cdot,\cdot\rangle_k^{\rm Boc}$ in an abstract setting and prove that they coincide (see Theorem \ref{compari}). This result is used in the proof of Theorem \ref{main}.

\subsection{Notation}


For a commutative ring $R$ and an $R$-module $M$, we set
$$M^\ast:=\Hom_R(M,R). $$
For an element $\pi \in R$, we set
$$M[\pi^n]:=\{a \in M \mid \pi^n \cdot a =0\} \text{ and }M[\pi^\infty]:=\bigcup_n M[\pi^n].$$
The quotient $M/\pi^n M$ is often abbreviated to $M/\pi^n$. 
If $M$ is a $\ZZ_p$-module, then we set
$$M^\vee :=\Hom_{\ZZ_p}(M,\QQ_p/\ZZ_p). $$

For a profinite group $G$, let $\rgamma(G,-):=C^\bullet_{\rm cont}(G,-)$ be the complex of continuous cochains as in \cite[\S 3.4]{nekovar}. The cohomology groups of $\rgamma(G,-)$ are denoted by $H^i(G,-)$.  For a field $F$, the absolute Galois group of $F$ is denoted by $G_F$. 
We abbreviate $\rgamma(G_F,-)$ and $H^i(G_F,-)$ to $\rgamma(F,-)$ and $H^i(F,-)$ respectively.
For a Galois extension $L/F$, we 
sometimes abbreviate $\rgamma(\Gal(L/F),-)$ and $H^i(\Gal(L/F),-)$ to $\rgamma(L/F,-)$ and $H^i(L/F,-)$ respectively.

Let $K$ be a number field. The set of infinite places of $K$ is denoted by $S_\infty(K)$. The set of places of $K$ lying above a given prime number $p$ is denoted by $S_p(K)$. We often abbreviate $S_\infty(K)$ and $S_p(K)$ to $S_\infty$ and $S_p$ respectively. 

For a finite set $S$ of places of $K$ containing $S_\infty$, let $K_S/K$ be the maximal extension unramified outside $K$ and set $G_{K,S}:=\Gal(K_S/K)$. We set
$$S_f:=S\setminus S_\infty.$$

For a $G_K$-module $X$ and a finite abelian extension $L/K$, let ${\rm Ind}_{L/K}(X):={\rm Ind}_{G_L}^{G_K}(X)$ be the induced module. 

\begin{acknowledgments} 
We would like to thank Dominik Bullach for carefully reading an earlier draft of this article and giving helpful comments. We would also like to thank Alberto Angurel Andres, Takenori Kataoka, Chan-Ho Kim, and Masato Kurihara for helpful discussions and comments. 

The first author acknowledges support for this article as part of Grants CEX2023-001347-S, PID2022-142024NB-I00 and CNS2023-145167 funded by \newline MICIU/AEI/10.13039/501100011033. 

The second author was supported by JSPS KAKENHI Grant Number 22K13896.
\end{acknowledgments}

\section{Selmer complexes}

\subsection{Nekov\'a\v{r}'s Selmer complexes}\label{sec nek selmer}

We give a review on Nekov\'a\v{r}'s Selmer complexes \cite{nekovar}. 

Let $p$ be an odd prime number. Let $K$ be a number field. We fix a finite set $S$ of places of $K$ which contains $S_\infty\cup S_p$. Let $\Phi/\QQ_p$ be a finite extension. Let $\cO:=\cO_\Phi$ be its ring of integers and $\pi\in \cO$ a prime element. Let $\cR$ be a local Gorenstein $\cO$-order. Let $T$ be a free $\cR$-module of finite rank endowed with a continuous linear action of $G_{K,S}$. 

Suppose that the following data are given: for each $v\in S_p$, an exact sequence of $\cR[G_{K_v}]$-modules
\begin{equation}\label{greenberg local}
\sF_v: 0\to T_v^+\to T\to T_v^-\to 0
\end{equation}
such that $T_v^\pm$ are free as $\cR$-modules. 

We also assume that $T^{I_v}$ is a free $\cR$-module for every  $v \in S_f\setminus S_p$, where $I_v$ denotes the inertia subgroup of $G_{K_v}$. 

\begin{example}\label{ex greenberg}
Let $E$ be an elliptic curve defined over $K$. Assume that $E$ has good ordinary reduction at every prime lying above $p$. For $v\in S_p$, let $\widetilde E_v$ be the reduction of $E$ modulo $v$. Let $T:=T_p(E)$ be the $p$-adic Tate module of $E$. Then, by setting $T_v^-:=T_p(\widetilde E_v)$ and defining $T_v^+$ as the kernel of the reduction map $T\to T_v^-$, we obtain an exact sequence of the form (\ref{greenberg local}). 

More generally, we can consider the following equivariant setting. Let $L/K$ be a finite abelian $p$-extension unramified outside $p$ and set $G:=\Gal(L/K)$. Then $\cR:=\ZZ_p[G]$ is a local Gorenstein $\ZZ_p$-order. If we set $T:={\rm Ind}_{L/K}(T_p(E))$, then it has an exact sequence of the form (\ref{greenberg local}) for each $v\in S_p$ by setting $T_v^-:={\rm Ind}_{L/K}(T_p(\widetilde E_v))$ and $T_v^+:=\ker(T\to T_v^-)$. 
\end{example}

For a finite place $v$ of $K$, the unramified cohomology complex is defined by
$$\rgamma_{\rm ur}(K_v, T):= \rgamma(K_v^{\rm ur}/K_v, T^{I_v}),$$
where $K_v^{\rm ur}$ denotes the maximal unramified extension of $K_v$ and $I_v:=G_{K_v^{\rm ur}}$ the inertia group. We set
$$\rgamma_{/{\rm ur}}(K_v,T):={\rm Cone}\left(\rgamma_{\rm ur}(K_v,T) \xrightarrow{-{\rm Inf}_{K_v^{\rm ur}/K_v}} \rgamma(K_v,T)\right),$$
where ${\rm Inf}_{K_v^{\rm ur}/K_v}$ is the inflation morphism. 

\begin{definition}\label{def sel}
Let $\sF=(\sF_v)_{v\in S_p}$ be the collection of exact sequences as in (\ref{greenberg local}). We define {\it Nekov\'a\v{r}'s Selmer complex} attached to $\sF$ by
$$\widetilde \rgamma_\sF(K,T) := {\rm Cone}\left(\rgamma(G_{K,S},T)\to \bigoplus_{v\in S_p}\rgamma(K_v,T_v^-)\oplus \bigoplus_{v\in S_f\setminus S_p}\rgamma_{/{\rm ur}}(K_v,T)\right)[-1].$$
\end{definition}

\begin{remark}
The complex $\widetilde \rgamma_\sF(K,T)$ is independent of $S$ up to quasi-isomorphism (see \cite[Prop. 7.8.8(ii)]{nekovar}). 
\end{remark}

\begin{proposition}\label{prop perf}\
\begin{itemize}
\item[(i)] $\widetilde \rgamma_\sF(K,T)$ is a perfect complex of $\cR$-modules. 
\item[(ii)] We set
$$\chi(\sF):= \sum_{v\in S_\infty}{\rm rank}_{\cR}(H^0(K_v,T^\ast(1))) - \sum_{v\in S_p}[K_v:\QQ_p] \cdot {\rm rank}_{\cR}(T_v^-).$$
Then the Euler characteristic of $\widetilde \rgamma_\sF(K,T)$ is $-\chi(\sF)$, i.e., for any bounded complex $C^\bullet$ of free $\cR$-modules which is quasi-isomorphic to $\widetilde \rgamma_\sF(K,T)$, we have
$$\sum_{i\in \ZZ} (-1)^i {\rm rank}_\cR(C^i)= -\chi(\sF). $$
\end{itemize}
\end{proposition}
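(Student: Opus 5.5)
The plan is to read both claims off the defining exact triangle
$$\widetilde \rgamma_\sF(K,T)\to \rgamma(G_{K,S},T)\to \bigoplus_{v\in S_p}\rgamma(K_v,T_v^-)\oplus \bigoplus_{v\in S_f\setminus S_p}\rgamma_{/{\rm ur}}(K_v,T)\to,$$
using the fact that perfect complexes form a thick (triangulated) subcategory of the derived category of $\cR$-modules, and that Euler characteristics (computed as alternating sums of ranks of free representatives) are additive in exact triangles.

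\textbf{Step 1: perfectness of each term.} For $\rgamma(G_{K,S},T)$ and $\rgamma(K_v,T_v^-)$, $\rgamma(K_v,T)$, I would invoke the standard results of Nekov\'a\v{r} \cite[Prop.~4.2.9 and \S 5]{nekovar}. Their hypotheses are that $T$ and $T_v^\pm$ are finitely generated and $\cR$-free, and that $p$ is odd so the real places cause no trouble. These results show that continuous cochain complexes of a profinite group with finite $p$-cohomological dimension and finiteness for finite modules are perfect. For $\rgamma_{\rm ur}(K_v,T)=\rgamma(\widehat{\ZZ},T^{I_v})$, I would use the explicit two-term model $[T^{I_v}\xrightarrow{{\rm Fr}_v-1}T^{I_v}]$ in degrees $0,1$. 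This is perfect precisely because $T^{I_v}$ is assumed $\cR$-free. Hence $\rgamma_{/{\rm ur}}(K_v,T)$, being a cone of perfect complexes, is perfect, and so is $\widetilde \rgamma_\sF(K,T)$. This gives (i).

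\textbf{Step 2: the Euler characteristic.} It suffices to compute ranks after base change to the total fraction ring, or at a minimal prime, where ranks of free modules are preserved. Alternatively, one can reduce modulo the maximal ideal of $\cR$ and use the Tate formulas over the residue field, since $\chi$ commutes with $\otimes^{\mathbf L}$ for perfect complexes. I would then apply:
\begin{itemize}
\item the global formula $\chi(\rgamma(G_{K,S},T))=\sum_{v\in S_\infty}{\rm rank}\,H^0(K_v,T)-[K:\QQ]\,{\rm rank}\,T$;
\item the local formula $\chi(\rgamma(K_v,M))=-[K_v:\QQ_p]\,{\rm rank}\,M$ for $v\mid p$, and $0$ for $v\nmid p$;
\item $\chi(\rgamma_{\rm ur}(K_v,T))=0$ from the two-term model, which gives $\chi(\rgamma_{/{\rm ur}})=0$ for $v\in S_f\setminus S_p$.
\end{itemize}
Additivity in the triangle then yields
$$\chi(\widetilde\rgamma_\sF)=\sum_{v\in S_\infty}{\rm rank}\,H^0(K_v,T)-[K:\QQ]\,{\rm rank}\,T+\sum_{v\in S_p}[K_v:\QQ_p]\,{\rm rank}\,T_v^- .$$

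\textbf{Step 3: matching the stated formula.} This bookkeeping is the main obstacle. Since $\sum_{v\mid p}[K_v:\QQ_p]=[K:\QQ]=\sum_{v\in S_\infty}[K_v:\mathbb R]$, it remains to check, place by place at infinity, that
$$[K_v:\mathbb R]\,{\rm rank}\,T-{\rm rank}\,H^0(K_v,T)={\rm rank}\,H^0(K_v,T^\ast(1)).$$
For complex $v$ this is immediate: both sides equal ${\rm rank}\,T$. For real $v$ with complex conjugation $c$, I would note that, $p$ being odd, $T$ splits as $T^{c=1}\oplus T^{c=-1}$ with free summands (passing to the residue field if needed). Then $T^\ast(1)^{c=1}\cong (T^{c=-1})^\ast$ because $c$ acts by $-1$ on $\ZZ_p(1)$. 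Summing over places and substituting $[K:\QQ]\,{\rm rank}\,T$ gives
$$\chi(\widetilde\rgamma_\sF)=-\sum_{v\in S_\infty}{\rm rank}\,H^0(K_v,T^\ast(1))+\sum_{v\in S_p}[K_v:\QQ_p]\,{\rm rank}\,T_v^- =-\chi(\sF),$$
which is (ii). The care needed here lies in sign conventions and in justifying rank statements over a non-domain $\cR$, which I would handle by reducing to the residue field.
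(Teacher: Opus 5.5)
Your proposal is correct and is essentially the paper's argument. The paper simply cites \cite[Proposition 9.7.2(ii)]{nekovar} for perfectness, adding (exactly as you do) that $\rgamma_{\rm ur}(K_v,T)$ is perfect because $T^{I_v}$ is $\cR$-free, and cites \cite[Theorem 7.8.6]{nekovar} for the Euler characteristic; your defining triangle, additivity of $\chi$, Tate's local and global formulas over the residue field, and the identity $[K_v:\mathbb{R}]\,{\rm rank}\,T-{\rm rank}\,H^0(K_v,T)={\rm rank}\,H^0(K_v,T^\ast(1))$ at infinite places are precisely the content of those citations.
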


\begin{proof}
Claim (i) follows from \cite[Proposition 9.7.2(ii)]{nekovar}. (Note that $\rgamma_{\rm ur}(K_v,T)$ is perfect for $v\in S_f\setminus S_p$ since we assume $T^{I_v}$ is a free $\cR$-module.) Claim (ii) follows from \cite[Theorem 7.8.6]{nekovar}.
\end{proof}

We set
$$V:=\Phi\otimes_\cO T \text{ and }A:=V/T.$$
Then the exact sequence (\ref{greenberg local}) naturally gives the exact sequences
$$0\to V_v^+\to V\to V_v^-\to 0 \text{ and }0\to A_v^+\to A\to A_v^-\to 0.$$
Using these, we can define Selmer complexes for $V$ and $A$
$$\widetilde \rgamma_\sF(K,V)\text{ and }\widetilde \rgamma_\sF(K,A)$$
exactly in the same way.

\subsection{Poitou-Tate complexes}\label{sec PT}

We continue to suppose that the data $\sF=(\sF_v)_{v\in S_p}$ as in (\ref{greenberg local}) are given. 

Let $n$ be a positive integer. We set
$$\cR_n:=\cR/\pi^n \cR \text{ and }A_n:=T/\pi^n T.$$
Note that $\sF_v$ induces an exact sequence
$$0\to A_{n,v}^+\to A_n \to A_{n,v}^-\to 0$$
for each $v\in S_p$. 

For a finite place $v$, let $H^1_f(K_v,-)$ denote the Bloch-Kato local condition (as in \cite[\S 1.3]{R}). For the definition of a general Selmer structure, see \cite[Definition 2.1.1]{MRkoly}. We consider the following Selmer structure attached to $\sF$. 

\begin{definition}\label{def sel str}
Let $X \in \{T,V,A,A_n\}$. 
We define a Selmer structure $\cF$ on $X$ (attached to $\sF$) as follows. 
\begin{itemize}
\item If $v\in S_p$, then we set
$$H^1_\cF(K_v,X):= \im (H^1(K_v,X_v^+)\to H^1(K_v,X)).$$
\item If $v \notin S_p$, then we set 
$$H^1_\cF(K_v,X):= H^1_f(K_v,X).$$
\end{itemize}
\end{definition}

The Selmer group attached to the Selmer structure $\cF$ is defined by
$$H^1_\cF(K,X):=\ker\left(H^1(G_{K,S},X)\to \bigoplus_{v\in S_f}H^1_{/\cF}(K_v,X)\right),$$
where for each $v$, $H^1_{/\cF}(K_v,X)$ denotes the quotient of $H^1(K_v,X)$ by $H^1_{\cF}(K_v,X)$.

Let $\cF^\ast$ be the dual of the Selmer structure $\cF$ (see \cite[\S 2.3]{MRkoly}). 
Let 
\begin{equation}\label{PT}
0\to H^1_\cF(K,A_n) \to H^1(G_{K,S},A_n)\xrightarrow{\lambda} \bigoplus_{v\in S_f}H^1_{/\cF}(K_v,A_n)\xrightarrow{\nu} H^1_{\cF^\ast}(K,A_n^\ast(1))^\ast
\end{equation}
be the Poitou-Tate global duality exact sequence (see \cite[Theorem 2.3.4]{MRkoly}). We mainly consider the case when the last map $\nu$ is surjective:

\begin{hypothesis}\label{hyp surj}
The map $\nu$ in (\ref{PT}) is surjective. 
\end{hypothesis}

Using the Poitou-Tate sequence (\ref{PT}), we give the following definition. 

\begin{definition}\label{def PT}
We define a {\it Poitou-Tate complex} by
$$C_{{\rm PT},S}=C_{{\rm PT},S,n} := \left[ H^1(G_{K,S},A_n)\xrightarrow{\lambda} \bigoplus_{v\in S_f}H^1_{/\cF}(K_v,A_n)\right],$$
where the first term is placed in degree one. 
\end{definition}

\begin{remark}
By definition, note that 
$$H^1(C_{{\rm PT},S})= H^1_\cF(K,A_n)$$
and, if Hypothesis \ref{hyp surj} is satisfied, then
$$H^2(C_{{\rm PT},S}) = H^1_{\cF^\ast}(K,A_n^\ast(1))^\ast. $$
\end{remark}

\begin{proposition}\label{prop change S}
Let $S'$ be a finite set of places of $K$ such that $S\subset S'$. Assume that Hypothesis \ref{hyp surj} is satisfied for $S$. Then Hypothesis \ref{hyp surj} is also satisfied for $S'$, and $C_{{\rm PT},S}$ is canonically quasi-isomorphic to $C_{{\rm PT},S'}$. 
\end{proposition}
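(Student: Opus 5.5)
The plan is to compare the two Poitou-Tate sequences directly, using the inflation map $H^1(G_{K,S},A_n)\to H^1(G_{K,S'},A_n)$ together with the fact that the Selmer structure $\cF$ at places $v\in S'\setminus S$ is the unramified condition. Such places lie outside $S_p$, and $A_n$ is unramified at them, so $H^1_\cF(K_v,A_n)=H^1_f(K_v,A_n)=H^1_{\rm ur}(K_v,A_n)$. The Selmer groups $H^1_\cF(K,A_n)$ and $H^1_{\cF^\ast}(K,A_n^\ast(1))$ do not depend on whether they are computed with $S$ or with $S'$. For $H^1_\cF$ this is because a class in $H^1(G_{K,S'},A_n)$ that is unramified at all $v\in S'\setminus S$ is inflated from $G_{K,S}$. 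For the dual Selmer group the same argument applies to $A_n^\ast(1)$, whose dual condition at these $v$ is again unramified. Consequently the last terms of the two Poitou-Tate sequences (\ref{PT}) agree.

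To see that Hypothesis \ref{hyp surj} passes from $S$ to $S'$, I will use the commutative square formed by the inflation map and the inclusion
$$\bigoplus_{v\in S_f}H^1_{/\cF}(K_v,A_n)\hookrightarrow \bigoplus_{v\in S'_f}H^1_{/\cF}(K_v,A_n),$$
extended by zero in the new components. The maps $\nu_S$ and $\nu_{S'}$ are compatible with this square, since both are defined by the same local Tate pairing against classes of $H^1_{\cF^\ast}(K,A_n^\ast(1))$. Therefore $\nu_{S'}$ restricted to the image of the $S$-part equals $\nu_S$, and $\nu_{S'}$ is surjective whenever $\nu_S$ is.

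Next I will construct the morphism of complexes $\iota:C_{{\rm PT},S}\to C_{{\rm PT},S'}$, given by inflation in degree one and by the above inclusion in degree two. Commutativity of the square amounts to checking that the inflated class has trivial image in $H^1_{/\cF}(K_v,A_n)$ for $v\in S'\setminus S$, which holds because it is unramified there. On $H^1$ the map $\iota$ induces the identity of $H^1_\cF(K,A_n)$. On $H^2$, under the identification of both cohomology groups with $H^1_{\cF^\ast}(K,A_n^\ast(1))^\ast$ provided by surjectivity of $\nu$, the map $\iota$ is compatible with $\nu_S$ and $\nu_{S'}$ and hence induces the identity. It follows that $\iota$ is a quasi-isomorphism. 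It is canonical, and for $S\subset S'\subset S''$ the maps compose correctly.

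The main obstacle I expect is the careful verification that the Poitou-Tate map $\nu$ (equivalently, the global duality) is compatible with enlarging $S$. This requires that the sum-of-local-invariants description of $\nu$ is the same in both cases and that $H^1_{\cF^\ast}$ is genuinely independent of $S$. The latter rests on the identification $H^1_f=H^1_{\rm ur}$ for unramified modules at places not dividing $p$, as in \cite[\S 1.3]{R}, and on the fact that these unramified conditions are exact orthogonal complements under local duality. With this in place, the argument consists only of formal diagram checks.
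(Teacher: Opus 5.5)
Your proposal is correct and follows the paper's proof. It uses the same morphism $C_{{\rm PT},S}\to C_{{\rm PT},S'}$, given by inflation in degree one and the natural inclusion in degree two. You also make explicit what the paper leaves as ``clear'': the $S$-independence of $H^1_\cF(K,A_n)$ and $H^1_{\cF^\ast}(K,A_n^\ast(1))$ via inflation--restriction, and the compatibility of $\nu_{S'}$ with $\nu_S$ through this morphism, which gives both the surjectivity of $\nu_{S'}$ (for which the paper simply cites \cite[Theorem 2.3.4]{MRkoly}) and the bijectivity on $H^2$.
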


\begin{proof}
Note that $K_S \subset K_{S'}$ if $S\subset S'$. The inflation map
$$H^1(G_{K,S},A_n)\to H^1(G_{K,S'},A_n)$$
(which is injective) and the natural injection
$$\bigoplus_{v\in S_f}H^1_{/\cF}(K_v,A_n) \hookrightarrow \bigoplus_{v\in S_f'} H^1_{/\cF}(K_v,A_n)$$
define a morphism of complexes
$$C_{{\rm PT},S}\to C_{{\rm PT},S'}.$$
If Hypothesis \ref{hyp surj} is satisfied for $S$, then it is satisfied also for $S'$ by \cite[Theorem 2.3.4]{MRkoly}. The morphism defined above is clearly a quasi-isomorphism. 
\end{proof}

\begin{remark}\label{rem abb}
In the following, we often abbreviate $C_{{\rm PT},S}$ to $C_{\rm PT}$. This is justified by Proposition \ref{prop change S}.
\end{remark}

We now assume the following.

\begin{hypothesis}\label{hyp tech}\
\begin{itemize}
\item[(i)] $H^0(K,A)=H^0(K,T^\vee(1))=0$.
\item[(ii)] $H^0(K_v,T_v^-)=0$ for any $v\in S_p$. 
\item[(iii)] $H^0(K_v,A_v^-)=H^0(K_v,(T_v^+)^\vee(1))=0$ for any $v\in S_p$. 
\item[(iv)] $H^1_{\rm ur}(K_v,T)=H^1_f(K_v,T)$ for any finite place $v$ with $v\notin S_p$. 
\end{itemize}
\end{hypothesis}

\begin{remark}\label{HypimplyHyp}
In the setting of Example \ref{ex greenberg}, Hypothesis \ref{hyp tech} is satisfied if $p$ does not divide $\# E(K)_{\rm tors}\cdot {\rm Tam}(E/K)\cdot \prod_{v\mid p} \# \widetilde E_v(\FF_v)$, 
where ${\rm Tam}(E/K)$ denotes the product of Tamagawa factors of $E/K$ and $\FF_v$ the residue field of $v$. 
\end{remark}

\begin{lemma}\label{lem cartesian}
Assume Hypothesis \ref{hyp tech}(iii). Then, for any $v\in S_p$, the natural surjection $T\twoheadrightarrow A_n$ induces a surjection
$$H^1_\cF(K_v, T) \twoheadrightarrow H^1_\cF(K_v,A_n).$$
\end{lemma}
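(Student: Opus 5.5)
We reduce the claim to the surjectivity of $H^1(K_v,T_v^+)\to H^1(K_v,A_{n,v}^+)$, and deduce that from a vanishing $H^2$ via local duality. Fix $v\in S_p$. By Definition \ref{def sel str}, $H^1_\cF(K_v,T)$ is the image of $H^1(K_v,T_v^+)\to H^1(K_v,T)$, and $H^1_\cF(K_v,A_n)$ is the image of $H^1(K_v,A_{n,v}^+)\to H^1(K_v,A_n)$. The reduction maps $T\to A_n$ and $T_v^+\to A_{n,v}^+$ are compatible with the inclusions $T_v^+\subset T$ and $A_{n,v}^+\subset A_n$, so we get a commutative square
$$\xymatrix{
H^1(K_v,T_v^+) \ar[r] \ar[d] & H^1(K_v,T) \ar[d]\\
H^1(K_v,A_{n,v}^+) \ar[r] & H^1(K_v,A_n).
}$$
In particular the image of $H^1_\cF(K_v,T)$ in $H^1(K_v,A_n)$ is the image of $H^1(K_v,T_v^+)$ under the composite through the lower-left corner. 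Hence it is enough to show that the left vertical map $H^1(K_v,T_v^+)\to H^1(K_v,A_{n,v}^+)$ is surjective.

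Since $T_v^+$ is free over $\cR$, and hence $\pi$-torsion free, the sequence $0\to T_v^+\xrightarrow{\pi^n}T_v^+\to A_{n,v}^+\to 0$ is exact. Its long exact cohomology sequence identifies the cokernel of $H^1(K_v,T_v^+)\to H^1(K_v,A_{n,v}^+)$ with $H^2(K_v,T_v^+)[\pi^n]$. So it suffices to show $H^2(K_v,T_v^+)[\pi^n]=0$; we will in fact show $H^2(K_v,T_v^+)=0$.

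For this we use local Tate duality. The group $H^2(K_v,T_v^+)$ is Pontryagin dual to $H^0(K_v,(T_v^+)^\vee(1))$, which vanishes by Hypothesis \ref{hyp tech}(iii). To apply duality for the compact module $T_v^+$, write $H^2(K_v,T_v^+)=\varprojlim_m H^2(K_v,T_v^+/\pi^m)$, which is legitimate since the $H^1$'s of finite modules are finite, so Mittag-Leffler holds. Finite-level duality then gives $H^2(K_v,T_v^+/\pi^m)\cong H^0(K_v,(T_v^+/\pi^m)^\vee(1))^\vee$. Each $(T_v^+/\pi^m)^\vee(1)$ embeds into $(T_v^+)^\vee(1)$, so every $H^0(K_v,(T_v^+/\pi^m)^\vee(1))$ vanishes, and therefore so does the limit $H^2(K_v,T_v^+)$.

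The only step requiring care is this passage from finite to continuous cohomology in the duality argument. Alternatively, one can avoid it: it suffices to have $H^2(K_v,A_{n,v}^+)=0$ (dual to $H^0(K_v,(A_{n,v}^+)^\vee(1))\subset H^0(K_v,(T_v^+)^\vee(1))=0$), because $\pi$-multiplication then gives $H^2(K_v,T_v^+)=\pi H^2(K_v,T_v^+)$. A finitely generated $\cR$-module with this property is zero by Nakayama, and this forces $H^2(K_v,T_v^+)[\pi^n]=0$.
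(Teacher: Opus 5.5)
Your proposal is correct and follows the same route as the paper: you reduce to the surjectivity of $H^1(K_v,T_v^+)\to H^1(K_v,A_{n,v}^+)$, and you deduce that from $H^2(K_v,T_v^+)\simeq H^0(K_v,(T_v^+)^\vee(1))^\vee=0$ under Hypothesis \ref{hyp tech}(iii). Your limit argument for the duality step, and the Nakayama alternative, just make explicit a standard fact that the paper uses without comment.
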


\begin{proof}
We have $H^2(K_v,T_v^+)\simeq H^0(K_v,(T_v^+)^\vee(1))^\vee =0$ by Hypothesis \ref{hyp tech}(iii). This implies that the map
$$H^1(K_v,T_v^+) \to H^1(K_v,A_{n,v}^+)$$
is surjective. The claim follows from the definition of the Selmer structure $\cF$. 
\end{proof}

\begin{lemma}\label{lem sel}
Assume Hypothesis \ref{hyp tech}. 
\begin{itemize}
\item[(i)] We have a natural identification $\widetilde H^1_\sF(K,T)=H^1_\cF(K,T)$. 
\item[(ii)] There is a canonical isomorphism $\widetilde H^2_{\sF}(K,T) \simeq  H^1_{\cF^\ast}(K,T^\vee(1))^\vee$. 
\item[(iii)] The natural injection $A_n^\ast(1)\hookrightarrow T^\vee(1)$ (which is the dual of the natural surjection $T \twoheadrightarrow T/\pi^nT=A_n$) induces an isomorphism
$$H^1_{\cF^\ast}(K,A_n^\ast(1))\xrightarrow{\sim} H^1_{\cF^\ast}(K,T^\vee(1))[\pi^n].$$
\end{itemize}
\end{lemma}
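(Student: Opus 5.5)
The plan is to read off all three parts from the defining exact triangle of the Selmer complex, combined with the vanishing statements in Hypothesis \ref{hyp tech} and local and global Tate duality.

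For (i), I take the long exact cohomology sequence of the cone defining $\widetilde\rgamma_\sF(K,T)$:
$$0\to \widetilde H^0_\sF(K,T)\to H^0(G_{K,S},T)\to \bigoplus_{v\in S_p}H^0(K_v,T_v^-)\oplus\bigoplus_{v\in S_f\setminus S_p}H^0_{/{\rm ur}}(K_v,T)\to \widetilde H^1_\sF(K,T)\to H^1(G_{K,S},T)\to \bigoplus_{v\in S_p}H^1(K_v,T_v^-)\oplus\bigoplus_{v\in S_f\setminus S_p}H^1_{/{\rm ur}}(K_v,T)\to\cdots$$
\begin{itemize}
\item By Hypothesis \ref{hyp tech}(ii), $H^0(K_v,T_v^-)=0$.
\item For $v\notin S_p$, $H^0_{/{\rm ur}}(K_v,T)=0$, because $H^0_{\rm ur}(K_v,T)=H^0(K_v,T)$ (inflation is an isomorphism in degree $0$). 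Also $H^1_{/{\rm ur}}(K_v,T)=H^1(K_v,T)/H^1_{\rm ur}(K_v,T)$, since inflation is injective on $H^1$.
\end{itemize}
Hence $\widetilde H^1_\sF(K,T)$ is the kernel of $H^1(G_{K,S},T)\to\bigoplus H^1(K_v,T_v^-)\oplus\bigoplus H^1(K_v,T)/H^1_{\rm ur}$. Two identifications turn this into $H^1_\cF(K,T)$:
\begin{itemize}
\item At $v\in S_p$, the kernel of $H^1(K_v,T)\to H^1(K_v,T_v^-)$ is the image of $H^1(K_v,T_v^+)$, by the long exact sequence of $\sF_v$. This is exactly $H^1_\cF(K_v,T)$.
\item At $v\notin S_p$, Hypothesis \ref{hyp tech}(iv) gives $H^1_{\rm ur}=H^1_f$.
\end{itemize}

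For (ii), I use Nekov\'a\v{r}'s duality theorem for Selmer complexes \cite[\S 6, Thm.\ 6.3.4]{nekovar}. It identifies $\widetilde H^2_\sF(K,T)$, up to the torsion/Pontryagin comparisons, with $\widetilde H^1_{\sF^\perp}(K,T^\vee(1))^\vee$. Here the dual local conditions are $(T^\vee(1))_v^+=(T_v^-)^\vee(1)$ at $p$ and unramified conditions elsewhere. Applied to the discrete module $T^\vee(1)$, the argument of (i) identifies $\widetilde H^1_{\sF^\perp}(K,T^\vee(1))$ with $H^1_{\cF^\ast}(K,T^\vee(1))$, using two inputs:
\begin{itemize}
\item The vanishing needed now is $H^0(K_v,(T_v^+)^\vee(1))=0$, which is Hypothesis \ref{hyp tech}(iii).
\item One needs the orthogonality facts that the image of $H^1(K_v,(T_v^-)^\vee(1))$ is the exact annihilator of $H^1_\cF(K_v,T)$ under local Tate duality, and that the unramified conditions are mutually orthogonal.
\end{itemize}
If one wishes to avoid quoting the duality theorem, an alternative route is to compare the long exact sequence in degrees $\ge 2$ with the Poitou--Tate sequence. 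One uses $H^2(K_v,T_v^-)\simeq H^0(K_v,(T_v^-)^\vee(1))^\vee$ and global $H^3=0$, and the five lemma then matches $\widetilde H^2$ with the cokernel of $H^1(G_{K,S},T)\to\bigoplus H^1_{/\cF}(K_v,T)$. By compact Poitou--Tate this cokernel is $H^1_{\cF^\ast}(K,T^\vee(1))^\vee$, with the $H^2$-terms vanishing thanks to $H^0(K,T^\vee(1))=0$ and the local vanishings.

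For (iii), I take the sequence $0\to A_n^\ast(1)\to T^\vee(1)\xrightarrow{\pi^n}T^\vee(1)\to0$. Its cohomology gives
$$0\to H^0(K,T^\vee(1))/\pi^n\to H^1(G_{K,S},A_n^\ast(1))\to H^1(G_{K,S},T^\vee(1))[\pi^n]\to 0,$$
and the first term vanishes by Hypothesis \ref{hyp tech}(i). It remains to check that the local conditions match: a class in $H^1(G_{K,S},A_n^\ast(1))$ lies in $\cF^\ast$ if and only if its image does.
\begin{itemize}
\item At $v\in S_p$, $H^1_{\cF^\ast}$ is the image of $H^1(K_v,(T_v^-)^\vee(1))$, or equivalently its $A_n$-version. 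Here I use the injectivity of $H^1(K_v,A_{n,v}^{-,\ast}(1))\to H^1(K_v,(T_v^-)^\vee(1))$, which follows from $H^0(K_v,(T_v^-)^\vee(1))$ being divisible or zero, together with Lemma \ref{lem cartesian} dualised (the cartesian property).
\item Away from $p$, I use the standard cartesian property of unramified conditions.
\end{itemize}

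I expect the main obstacle to be (ii): pinning down the dual local conditions and the orthogonality at $p$, and keeping the torsion/Pontryagin duality bookkeeping straight.
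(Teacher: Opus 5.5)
Your main line is the paper's proof. Part (i) comes from the long exact sequence of the cone together with Hypothesis \ref{hyp tech}(ii),(iv). Part (ii) is Nekov\'a\v{r}'s duality $\widetilde H^2_\sF(K,T)\simeq \widetilde H^1_{\sF^\ast}(K,T^\vee(1))^\vee$, followed by the argument of (i) for $T^\vee(1)$, using Hypothesis \ref{hyp tech}(iii),(iv) and local orthogonality. Part (iii) is the $\pi^n$-torsion sequence plus the cartesian property of the local conditions. By local duality that property amounts to surjectivity of $H^1_\cF(K_v,T)\to H^1_\cF(K_v,A_n)$: this is Lemma \ref{lem cartesian} for $v\in S_p$, and for $v\notin S_p$ it is the fact that $H^1_f(K_v,A_n)$ is the image of $H^1_f(K_v,T)$. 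Away from $p$ it is these propagated conditions that matter, not the unramified conditions on $A_n$.

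Two side remarks should be removed.

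First, in (iii), the injectivity of $H^1(K_v,(A^-_{n,v})^\ast(1))\to H^1(K_v,(T_v^-)^\vee(1))$ is not available. Hypothesis \ref{hyp tech} imposes nothing on $H^0(K_v,(T_v^-)^\vee(1))$; for an ordinary elliptic curve this is the $p$-power torsion of the formal group over $K_v$, which can be finite and nonzero. It is also not needed. What your argument actually uses is the injectivity of $H^1(K_v,(A^+_{n,v})^\ast(1))\to H^1(K_v,(T_v^+)^\vee(1))$. That follows from $H^0(K_v,(T_v^+)^\vee(1))=0$, and it is the dual of the surjectivity $H^1(K_v,T_v^+)\to H^1(K_v,A^+_{n,v})$ underlying Lemma \ref{lem cartesian}.

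Second, in (ii), the ``alternative route'' is wrong as stated. In general $\widetilde H^2_\sF(K,T)$ is not the cokernel of $H^1(G_{K,S},T)\to\bigoplus_{v\in S_f}H^1_{/\cF}(K_v,T)$. It is an extension of $\ker\bigl(H^2(G_{K,S},T)\to\bigoplus_{v\in S_f}H^2(K_v,T)\bigr)$ by that cokernel. This kernel is dual to the classes in $H^1(G_{K,S},T^\vee(1))$ that vanish locally at every $v\in S_f$. It is not killed by $H^0(K,T^\vee(1))=0$ or by the local hypotheses; for an elliptic curve over $\QQ$ of Mordell--Weil rank at least $2$ it even has positive $\ZZ_p$-rank. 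Now $H^1_{\cF^\ast}(K,T^\vee(1))^\vee$ is an extension of the same two groups. Identifying it with $\widetilde H^2_\sF(K,T)$ therefore requires a map compatible with both extensions, which is exactly what the duality theorem supplies. So this route does not bypass it.
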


\begin{proof}
(i) By definition, $\widetilde H^1_\sF(K,T)$ lies in the exact sequence
$$\bigoplus_{v\in S_p}H^0(K_v,T_v^-) \to \widetilde H^1_\sF(K,T)\to H^1(G_{K,S},T) \to \bigoplus_{v\in S_p} H^1(K_v,T_v^-)\oplus \bigoplus_{v\in S_f\setminus S_p}H^1_{/{\rm ur}}(K_v,T).$$
By Hypothesis \ref{hyp tech}(ii) and (iv), and the definition of $\cF$, we have
$$\widetilde H^1_\sF(K,T)=\ker \left( H^1(G_{K,S},T) \to \bigoplus_{v\in S_p}H^1(K_v,T_v^-)\oplus \bigoplus_{v\in S_f\setminus S_p}H^1_{/f}(K_v,T)\right) = H^1_\cF(K,T). $$
This proves claim (i). 

(ii) Let $\widetilde \rgamma_{\sF^\ast}(K,T^\vee(1))$ be the Selmer complex defined by the dual of (\ref{greenberg local}):
$$\sF_v^\ast: 0 \to (T_v^-)^\vee(1) \to T^\vee(1)\to (T_v^+)^\vee(1)\to 0$$
for each $v\in S_p$. Then, by duality (see \cite[Proposition 9.7.2(i)]{nekovar}), we have a canonical isomorphism
$$\widetilde H^2_\sF(K,T)\simeq \widetilde H^1_{\sF^\ast}(K,T^\vee(1))^\vee.$$
Hence it is sufficient to show $\widetilde H^1_{\sF^\ast}(K,T^\vee(1))\simeq H^1_{\cF^\ast}(K,T^\vee(1))$. By definition, $\widetilde H^1_{\sF^\ast}(K,T^\vee(1))$ lies in the exact sequence
\begin{multline*}
\bigoplus_{v\in S_p}H^0(K_v,(T_v^+)^\vee(1)) \to \widetilde H^1_{\sF^\ast}(K,T^\vee(1))\to H^1(G_{K,S},T^\vee(1)) \\
\to \bigoplus_{v\in S_p} H^1(K_v,(T_v^+)^\vee(1))\oplus \bigoplus_{v\in S_f\setminus S_p}H^1_{/{\rm ur}}(K_v,T^\vee(1)).
\end{multline*}
The claim follows from this because the first term in this exact sequence vanishes by Hypothesis \ref{hyp tech}(iii) and because for each $v\in S_f\setminus S_p$,
$$H^1_{/{\rm ur}}(K_v,T^\vee(1))\simeq H^1_{{\rm ur}}(K_v,T)^\vee=H^1_{f}(K_v,T)^\vee=H^1_{\cF}(K_v,T)^\vee\simeq H^1_{\cF^\ast}(K_v,T^\vee(1))$$ by Hypothesis \ref{hyp tech}(iv).

(iii) We set $X:=T^\vee(1)$. Then note that $X[\pi^n]=A_n^\ast(1)$. Since we have $H^0(K,X)=0$ by Hypothesis \ref{hyp tech}(i), the natural injection $X[\pi^n]\hookrightarrow X$ induces an isomorphism
$$H^1(G_{K,S},X[\pi^n])\xrightarrow{\sim} H^1(G_{K,S},X)[\pi^n].$$
Consider the following commutative diagram with exact rows:
$$
\xymatrix{
0 \ar[r] & H^1_{\cF^\ast}(K,X[\pi^n]) \ar[r] \ar[d]& H^1(G_{K,S},X[\pi^n]) \ar[r] \ar[d]^\simeq& \bigoplus_{v\in S_f}H^1_{/\cF^\ast}(K_v,X[\pi^n])\ar[d]\\
0 \ar[r] & H^1_{\cF^\ast}(K,X)[\pi^n] \ar[r]& H^1(G_{K,S},X)[\pi^n] \ar[r]& \bigoplus_{v\in S_f}H^1_{/\cF^\ast}(K_v,X)[\pi^n].\\
}
$$
To prove the claim, it is sufficient to check that the right vertical arrow is an injection, or equivalently, the natural map
$$H^1_\cF(K_v,T) \to H^1_\cF(K_v,A_n)$$
is surjective for each $v\in S_f$. If $v\in S_f\setminus S_p$, then this follows from the fact that $H^1_f(K_v,A_n)$ is the image of $H^1_f(K_v,T)$ (see \cite[Lemma 1.3.8(i)]{R}). If $v\in S_p$, the claim is proved in Lemma \ref{lem cartesian}. This completes the proof of claim (iii). 
\end{proof}

We set 
$$C_{\rm Nek}:=\widetilde \rgamma_\sF(K,T).$$
In the rest of this section we fix $n\geq 1$ and, after recalling that $\cR_n:=\cR/\pi^n \cR$, we also set
$$C_{\rm Nek}/\pi^n:= C_{\rm Nek}\lotimes_\cR \cR_n.$$ 

\begin{lemma}\label{lem Cnek}
Assume Hypothesis \ref{hyp tech}(i). Then $C_{\rm Nek}$ and $C_{\rm Nek}/\pi^n$ are acyclic outside degrees one and two, and the following hold.  
\begin{itemize}
\item[(i)] There is a canonical exact sequence
$$0\to H^1(C_{\rm Nek})/\pi^n\xrightarrow{q_1} H^1(C_{\rm Nek}/\pi^n) \to H^2(C_{\rm Nek})[\pi^n]\to 0.$$
\item[(ii)] There is a canonical isomorphism
$$q_2:H^2(C_{\rm Nek})/\pi^n\xrightarrow{\sim} H^2(C_{\rm Nek}/\pi^n).$$
\end{itemize}
\end{lemma}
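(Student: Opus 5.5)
We first locate the cohomology of $C_{\rm Nek}$ in degrees one and two, and then use the universal coefficient sequence for $-\lotimes_\cR \cR_n$.

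\textbf{Step 1: $C_{\rm Nek}$ is concentrated in degrees $[0,3]$, and degrees $0$ and $3$ vanish.} By Definition \ref{def sel} and the long exact sequence of the cone, $H^i(C_{\rm Nek})$ vanishes for $i<0$. It also vanishes for $i>3$, because global and local cohomology vanish beyond degree two ($p$ is odd) and the unramified complexes have cohomological dimension one.

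For degree zero, the exact sequence gives $H^0(C_{\rm Nek})\hookrightarrow H^0(G_{K,S},T)$. The latter is zero because $T$ is $\pi$-torsion free and $H^0(K,A)=0$ by Hypothesis \ref{hyp tech}(i): any nonzero invariant $t$ can be divided so that $t\notin \pi T$, and then its image in $A[\pi]\subset A$ is a nonzero invariant.

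For degree three, Nekov\'a\v{r}'s duality \cite[Proposition 9.7.2(i)]{nekovar} (already used in Lemma \ref{lem sel}(ii)) gives $\widetilde H^3_\sF(K,T)\simeq \widetilde H^0_{\sF^\ast}(K,T^\vee(1))^\vee$. This injects into $H^0(K,T^\vee(1))^\vee$, which is zero by Hypothesis \ref{hyp tech}(i). Alternatively, one can argue directly: $H^2(G_{K,S},T)\to\bigoplus_{v\in S_p}H^2(K_v,T)\to \bigoplus_{v\in S_p} H^2(K_v,T_v^-)$ is surjective. Here the first map is surjective by Poitou--Tate, since its cokernel is dual to $H^0(K,T^\vee(1))$, and the second is surjective since local $H^3$ vanishes. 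I will go with the duality route. This step is the main point to verify carefully.

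\textbf{Step 2: $C_{\rm Nek}$ is a perfect complex with amplitude in $[1,2]$.} By Proposition \ref{prop perf}(i), $C_{\rm Nek}$ is perfect. Perfect complexes over the local ring $\cR$ can be represented by bounded complexes of finitely generated free modules. Combining this with Step 1 and the standard truncation argument for perfect complexes, we want a representative $P^1\to P^2$ with projective terms.

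The point needing care is that truncating at the bottom requires $H^1$ to be suitably nice. Rather than rely on that, I will simply compute with the bounded free representative $P^\bullet$ in degrees $[0,3]$. For perfect $P^\bullet$ and $\cR$-flat terms, $C_{\rm Nek}/\pi^n=P^\bullet/\pi^n$. Since $\pi$ is a nonzerodivisor on $\cR$ (as $\cR$ is an $\cO$-order, hence $\cO$-free), we get the short exact sequence $0\to P^\bullet\xrightarrow{\pi^n}P^\bullet\to P^\bullet/\pi^n\to 0$.

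\textbf{Step 3: the long exact sequence.} Taking cohomology of this short exact sequence gives, for each $i$,
$$0\to H^i(C_{\rm Nek})/\pi^n\to H^i(C_{\rm Nek}/\pi^n)\to H^{i+1}(C_{\rm Nek})[\pi^n]\to 0.$$
By Step 1, this yields $H^i(C_{\rm Nek}/\pi^n)=0$ for $i\neq 1,2$. For $i=0$ we use $H^0=H^1[\pi^n]$-term: $H^1(C_{\rm Nek})[\pi^n]$ must vanish, which follows because $H^1(C_{\rm Nek})\subset H^1(G_{K,S},T)$ modulo the image of $H^0$ terms, and $H^1(G_{K,S},T)$ is torsion free since $H^0(G_{K,S},A)=0$. (Alternatively this is not needed for acyclicity in degree $0$, since $H^0(C_{\rm Nek}/\pi^n)$ injects into $H^1(C_{\rm Nek})[\pi^n]$; I will verify it vanishes by the torsion-freeness just indicated.) For $i=3$ we get $H^3(C_{\rm Nek}/\pi^n)=H^3(C_{\rm Nek})/\pi^n=0$.

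For $i=1$, the sequence is exactly (i), with $q_1$ the map induced by $P^\bullet\to P^\bullet/\pi^n$. For $i=2$, the term $H^3(C_{\rm Nek})[\pi^n]$ vanishes, so $q_2$ is an isomorphism, giving (ii). Canonicity follows because these maps are the Bockstein long exact sequence attached to the exact triangle $C_{\rm Nek}\xrightarrow{\pi^n}C_{\rm Nek}\to C_{\rm Nek}/\pi^n\to$, which is independent of the choice of representative.
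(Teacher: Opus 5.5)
Your proposal is correct and follows essentially the same route as the paper: $H^0(C_{\rm Nek})=0$ from $H^0(K,T)=0$, $H^3(C_{\rm Nek})=0$ from Nekov\'a\v{r}'s duality and $H^0(K,T^\vee(1))=0$, $\cO$-torsion-freeness of $H^1(C_{\rm Nek})$, and then the long exact sequence of the triangle $C_{\rm Nek}\xrightarrow{\pi^n}C_{\rm Nek}\to C_{\rm Nek}/\pi^n$. Only cosmetic fixes are needed. The free representative in degrees $[0,3]$ is unnecessary, since any bounded free representative, or just the triangle, suffices. The dual of $\widetilde H^0_{\sF^\ast}(K,T^\vee(1))$ is a quotient of $H^0(K,T^\vee(1))^\vee$, not a submodule; this is harmless because the latter vanishes. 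For torsion-freeness of $H^1(C_{\rm Nek})$, you should also note that the local terms $H^0(K_v,T_v^-)$ injecting into it are torsion-free, and that $H^0_{/{\rm ur}}(K_v,T)=0$.
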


\begin{proof}
By $H^0(K,T^\vee(1))=0$, we have $H^3(C_{\rm Nek})=0$ by duality (see \cite[Proposition 9.7.2(iv)]{nekovar}). Also, by $H^0(K,A)=0$, we have $H^0(K,T)=0$ and $H^0(C_{\rm Nek})=0$. Note that the same assumption also implies that $H^1(C_{\rm Nek})$ is $\cO$-free (see also Lemma \ref{tflemma}(iii) below). The claim follows immediately from the long exact sequence associated to the exact triangle
$$C_{\rm Nek}\xrightarrow{\pi^n} C_{\rm Nek}\to C_{\rm Nek}/\pi^n.$$
\end{proof}

Let $\fn$ be a square-free product of finite places of $K$. 
Let $\cF^\fn$ be the Selmer structure obtained from $\cF$ by relaxing the conditions at $v \mid \fn$, namely, 
$$H^1_{\cF^\fn}(K_v,-):=\begin{cases}
H^1(K_v,-) &\text{if $v\mid \fn$,}\\
H^1_\cF(K_v,-) &\text{if $v\nmid \fn$}.
\end{cases}$$

Recall (from \cite[Definition 3.12]{bss2}, for example) the definition of ``core vertices". 

\begin{definition}\label{def core}
Let $\fn$ be a square-free product of places of $K$ which do not belong to $S_\infty\cup S_p\cup S_{\rm ram}(T)$, where $S_{\rm ram}(T)$ denotes the set of places of $K$ at which $T$ ramifies. We say that $\fn$ is a {\it core vertex} for $\cF$ on $A_n$ if 
\begin{itemize}
\item $H^1_{\cF^\fn}(K,A_n)$ is a free $\cR_n$-module, 
\item $H^1_{/\cF}(K_v,A_n)$ is a free $\cR_n$-module for every $v\mid \fn$, and
\item $H^1_{(\cF^\fn)^\ast}(K,A_n^\ast(1))=0$. 
\end{itemize}
(Note that our definition of core vertices is slightly general, since we do not assume $H^1_{/\cF}(K_v,A_n)$ is free {\it of rank one} for $v\mid \fn$.) 
We define the {\it core rank} $\chi(\cF)$ by
\begin{equation}\label{core formula}
\chi(\cF):={\rm rank}_{\cR_n}(H^1_{\cF^\fn}(K,A_n)) - \sum_{v\mid \fn} {\rm rank}_{\cR_n}(H^1_{/\cF}(K_v,A_n)).
\end{equation}
One checks that this is well-defined, namely, independent of the choice of a core vertex. (In fact, $-\chi(\cF)$ coincides with the Euler characteristic of $C_{{\rm PT}}$: see Proposition \ref{prop core}(ii) below.)
\end{definition}

We consider the following condition. 

\begin{hypothesis}\label{hyp core}
There exists a core vertex for $\cF$ on $A_n$ with core rank
$$\chi(\cF)= \chi(\sF),$$
where $\chi(\sF)$ is defined in Proposition \ref{prop perf}(ii). 
\end{hypothesis}

\begin{proposition}\label{prop core}
Assume Hypothesis \ref{hyp core} and let $\fn$ be a core vertex. 
\begin{itemize}
\item[(i)] We have the Poitou-Tate exact sequence
$$0\to H^1_\cF(K,A_n) \to H^1_{\cF^\fn}(K,A_n) \xrightarrow{\lambda_\fn} \bigoplus_{v\mid \fn}H^1_{/\cF}(K_v,A_n)\to H^1_{\cF^\ast}(K,A_n^\ast(1))^\ast\to 0.$$
\item[(ii)] If $S$ contains all prime divisors of $\fn$, then Hypothesis \ref{hyp surj} is satisfied for $S$, and $C_{{\rm PT},S}$ is canonically quasi-isomorphic to the complex
$$C_{{\rm PT},\fn}:=\left[H^1_{\cF^\fn}(K,A_n) \xrightarrow{\lambda_\fn} \bigoplus_{v\mid \fn}H^1_{/\cF}(K_v,A_n)\right].$$
In particular, $C_{{\rm PT},S}$ is a perfect complex of $\cR_n$-modules. 
\end{itemize}
\end{proposition}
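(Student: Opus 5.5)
Part (i) comes from comparing the global duality sequence for $\cF$ with the one for $\cF^\fn$. Poitou--Tate duality \cite[Theorem 2.3.4]{MRkoly}, applied to the pair $\cF\subset\cF^\fn$ (which differ only at $v\mid\fn$), gives an exact sequence
$$0\to H^1_\cF(K,A_n)\to H^1_{\cF^\fn}(K,A_n)\xrightarrow{\lambda_\fn}\bigoplus_{v\mid\fn}H^1_{/\cF}(K_v,A_n)\to H^1_{\cF^\ast}(K,A_n^\ast(1))^\ast\to H^1_{(\cF^\fn)^\ast}(K,A_n^\ast(1))^\ast\to 0 .$$
Since $\fn$ is a core vertex, the last term vanishes by definition. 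This yields (i).

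For (ii), first enlarge $S$ if needed so that it contains the primes dividing $\fn$; the statement assumes this already. Then:

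\begin{itemize}
\item[(a)] Surjectivity of $\nu$ (Hypothesis \ref{hyp surj}). The map $\nu$ restricted to the summands $v\mid\fn$ is the last map in (i), and that map is already surjective. Hence $\nu$ is surjective.
\item[(b)] The comparison morphism. The natural inclusion $H^1_{\cF^\fn}(K,A_n)\hookrightarrow H^1(G_{K,S},A_n)$ and the inclusion of the summands $\bigoplus_{v\mid\fn}\hookrightarrow\bigoplus_{v\in S_f}$ give a morphism of complexes $\iota\colon C_{{\rm PT},\fn}\to C_{{\rm PT},S}$. This is compatible with the differentials: for $c\in H^1_{\cF^\fn}(K,A_n)$, the localizations at $v\in S_f$ with $v\nmid\fn$ lie in $H^1_\cF(K_v,A_n)$, so they vanish in $H^1_{/\cF}$.
\item[(c)] $\iota$ is a quasi-isomorphism. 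On $H^1$, both complexes have cohomology $H^1_\cF(K,A_n)$ and $\iota$ induces the identity. On $H^2$, both cokernels map isomorphically onto $H^1_{\cF^\ast}(K,A_n^\ast(1))^\ast$, via $\nu$ and via the map in (i), and these maps are compatible with $\iota$. Hence $H^2(\iota)$ is an isomorphism.
\end{itemize}

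Canonicity: a different core vertex, or a larger $S$, can be compared through a common $S'$ containing both, using Proposition \ref{prop change S}. In that comparison, every map induces identities on $H^1_\cF$ and on $H^1_{\cF^\ast}(\cdot)^\ast$.

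Perfectness follows directly. $C_{{\rm PT},\fn}$ is a two-term complex of free $\cR_n$-modules by the first two conditions in Definition \ref{def core}, so $C_{{\rm PT},S}$ is perfect. Its Euler characteristic is $-\chi(\cF)$.

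The main point needing care is step (a), which together with (c) requires checking that the two duality sequences are compatible, i.e.\ that $\nu$ restricted to $\bigoplus_{v\mid\fn}$ agrees with the map in (i). Both maps are defined by the same local Tate pairings summed over places, so this should be formal, but it is the step where the bookkeeping has to be verified.
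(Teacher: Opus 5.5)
Your proposal is correct and follows essentially the same route as the paper. Part (i) comes from \cite[Theorem 2.3.4]{MRkoly} for $\cF\le\cF^\fn$ together with the vanishing of $H^1_{(\cF^\fn)^\ast}(K,A_n^\ast(1))$. Part (ii) uses the natural inclusion morphism $C_{{\rm PT},\fn}\to C_{{\rm PT},S}$, exactly as in the proof of Proposition \ref{prop change S}. The compatibility you flag is indeed formal: both maps are given by the same sum of local Tate pairings, and in the paper's notation $\nu=\Lambda^\ast\circ\overline u$.
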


\begin{proof}
Claim (i) follows from \cite[Theorem 2.3.4]{MRkoly} and $H^1_{(\cF^\fn)^\ast}(K,A_n^\ast(1))=0$. Claim (ii) follows by the same argument as Proposition \ref{prop change S}. 
\end{proof}

\begin{lemma}\label{lem NekPT}
Assume Hypothesis \ref{hyp tech}. 
\begin{itemize}
\item[(i)] There is a canonical map
$$\kappa_1: H^1(C_{\rm Nek})/\pi^n \to H^1_\cF(K,A_n).$$
\item[(ii)] There is a canonical isomorphism
$$\kappa_2: H^2(C_{\rm Nek})/\pi^n \xrightarrow{\sim} H^1_{\cF^\ast}(K,A_n^\ast(1))^\ast.$$
\item[(iii)] If we further assume Hypothesis \ref{hyp core}, then we have
$$\# H^1(C_{\rm Nek}/\pi^n) = \# H^1_\cF(K,A_n).$$
\end{itemize}
\end{lemma}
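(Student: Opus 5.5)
For (i), I will use Lemma \ref{lem sel}(i), which identifies $H^1(C_{\rm Nek})=\widetilde H^1_\sF(K,T)$ with $H^1_\cF(K,T)$. Reduction $T\to A_n$ induces a map $H^1(G_{K,S},T)\to H^1(G_{K,S},A_n)$. This map sends $H^1_\cF(K,T)$ into $H^1_\cF(K,A_n)$, since the local conditions are compatible with reduction. For $v\in S_p$ this is clear from the definition via $X_v^+$. For $v\notin S_p$ it holds because $H^1_f(K_v,A_n)$ is the image of $H^1_f(K_v,T)$ by \cite[Lemma 1.3.8(i)]{R}. The resulting map $H^1_\cF(K,T)\to H^1_\cF(K,A_n)$ kills $\pi^n H^1_\cF(K,T)$, so it factors through $H^1(C_{\rm Nek})/\pi^n$; this factored map is $\kappa_1$.

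For (ii), I will start from Lemma \ref{lem sel}(ii), which gives $H^2(C_{\rm Nek})\simeq H^1_{\cF^\ast}(K,T^\vee(1))^\vee$. Pontryagin duality turns $\pi^n$-torsion into the quotient by $\pi^n$, so
$$H^2(C_{\rm Nek})/\pi^n\simeq \bigl(H^1_{\cF^\ast}(K,T^\vee(1))[\pi^n]\bigr)^\vee .$$
By Lemma \ref{lem sel}(iii) the right-hand side is $H^1_{\cF^\ast}(K,A_n^\ast(1))^\vee$. Finally, for a finitely generated $\cR_n$-module $M$, the Pontryagin dual $M^\vee$ is identified with $\Hom_{\cR_n}(M,\cR_n)=M^\ast$, because $\cR_n$ is Gorenstein (equivalently, $\cR_n^\vee\simeq\cR_n$, using a fixed trace form since $\cR$ is a Gorenstein $\cO$-order). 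Composing these identifications gives $\kappa_2$.

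For (iii), I will count orders using Lemma \ref{lem Cnek}. Write $C:=C_{\rm Nek}/\pi^n$. Since $C$ is perfect over $\cR_n$ and acyclic outside degrees $1,2$, Proposition \ref{prop perf}(ii) gives
$$\#H^1(C)/\#H^2(C)=\#\cR_n^{\chi(\sF)} .$$
This holds because a perfect $\cR_n$-complex with finite cohomology has alternating order product equal to $\#\cR_n$ raised to minus its Euler characteristic, which is $-\chi(\sF)$ here.

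On the Poitou-Tate side, Proposition \ref{prop core}(ii) and Hypothesis \ref{hyp core} show that $C_{\rm PT,\fn}$ is a two-term complex of free $\cR_n$-modules with Euler characteristic $-\chi(\cF)=-\chi(\sF)$. Hence
$$\#H^1_\cF(K,A_n)/\#H^1_{\cF^\ast}(K,A_n^\ast(1))^\ast=\#\cR_n^{\chi(\sF)} .$$
By (ii) and Lemma \ref{lem Cnek}(ii),
$$\#H^2(C)=\#H^2(C_{\rm Nek})/\pi^n=\#H^1_{\cF^\ast}(K,A_n^\ast(1))^\ast ,$$
and comparing the two displayed order identities yields (iii).

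The main obstacle is the Euler characteristic bookkeeping in (iii). One must check that Proposition \ref{prop perf}(ii) really applies after reduction mod $\pi^n$: tensoring a bounded free complex representing $C_{\rm Nek}$ gives a bounded free $\cR_n$-complex of the same ranks. One must also check that the signs match, with $\chi$ entering as a positive exponent for $H^1$ on both sides.
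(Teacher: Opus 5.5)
Your proposal is correct and follows the paper's proof essentially step for step: $\kappa_1$ is induced by reduction on $H^1(C_{\rm Nek})=H^1_\cF(K,T)$, $\kappa_2$ comes from Lemma \ref{lem sel}(ii),(iii) via Pontryagin duality, and (iii) is the same Euler-characteristic comparison of $C_{\rm Nek}/\pi^n$ with $C_{{\rm PT},\fn}$, combined with $\#H^2(C_{\rm Nek}/\pi^n)=\#H^1_{\cF^\ast}(K,A_n^\ast(1))^\ast$. You also spell out two points the paper leaves implicit: that the local conditions are compatible with reduction, and that Pontryagin duals are identified with $\cR_n$-linear duals via the Gorenstein property.
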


\begin{proof}
(i) By Lemma \ref{lem sel}(i), we have
$$H^1(C_{\rm Nek})= \widetilde H^1_\sF(K,T) = H^1_\cF(K,T). $$
The map $\kappa_1$ is then defined to be the map $H^1_\cF(K,T)/\pi^n\to H^1_\cF(K,A_n)$ induced by the natural surjection $T\twoheadrightarrow T/\pi^n T=A_n$. 

(ii) By Lemma \ref{lem sel}(ii), we have
$$H^2(C_{\rm Nek})/\pi^n = \widetilde H^2_\sF(K,T)/\pi^n \simeq H^1_{\cF^\ast}(K,T^\vee(1))[\pi^n]^\vee.$$
The claim then follows from Lemma \ref{lem sel}(iii). 

(iii) Assume Hypothesis \ref{hyp core} and let $\fn$ be a core vertex. Then, by Proposition \ref{prop core} and (\ref{core formula}), we see that the Euler characteristic of $C_{{\rm PT},\fn}$ is $-\chi(\sF)$. Therefore, by Proposition \ref{prop perf}, it is the same as the Euler characteristic of $C_{\rm Nek}/\pi^n$, and so
$$\frac{\# H^1(C_{\rm Nek}/\pi^n)}{\# H^2(C_{\rm Nek}/\pi^n)} = \frac{\# H^1_\cF(K,A_n)}{\# H^1_{\cF^\ast}(K,A_n^\ast(1))^\ast}.$$
Since we have $\# H^2(C_{\rm Nek}/\pi^n) = \# H^1_{\cF^\ast}(K,A_n^\ast(1))^\ast$ by (ii) and Lemma \ref{lem Cnek}(ii), the claim follows. 
\end{proof}

\begin{theorem}\label{qithm}
Assume Hypotheses \ref{hyp tech} and \ref{hyp core}. 
Let $C_{\rm PT}$ be the Poitou-Tate complex in Definition \ref{def PT} (see also Remark \ref{rem abb}). 
Then there exists a canonical quasi-isomorphism
$$\varphi: C_{\rm PT} \to C_{\rm Nek}/\pi^n$$
such that the following diagrams are commutative:
$$\xymatrix{
 H^1(C_{\rm Nek})/\pi^n \ar[dr]^{q_1} \ar[d]_{\kappa_1}& \\
H^1_\cF(K,A_n)=H^1(C_{\rm PT}) \ar[r]_-{H^1(\varphi)} & H^1(C_{\rm Nek}/\pi^n),
} 
$$
$$\xymatrix{
 H^2(C_{\rm Nek})/\pi^n \ar[rd]^{q_2} \ar[d]_{-\kappa_2}& \\
H^1_{\cF^\ast}(K,A_n^\ast(1))^\ast = H^2(C_{\rm PT})  \ar[r]_-{H^2(\varphi)}  & H^2(C_{\rm Nek}/\pi^n) ,
}$$
where the maps $\kappa_i$ and $q_i$ are as in Lemmas \ref{lem NekPT} and \ref{lem Cnek} respectively. 
\end{theorem}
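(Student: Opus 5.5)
We construct $\varphi$ directly from Nekov\'a\v{r}'s cone description of $C_{\rm Nek}/\pi^n$, then show it is a quasi-isomorphism by a cardinality count.

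\emph{Step 1: an explicit model of $C_{\rm Nek}/\pi^n$.} Since all terms in the cone are built from continuous cochains on free $\cR$-modules, we have
$$C_{\rm Nek}/\pi^n \simeq {\rm Cone}\Big(\rgamma(G_{K,S},A_n)\to \bigoplus_{v\in S_p}\rgamma(K_v,A_{n,v}^-)\oplus\bigoplus_{v\in S_f\setminus S_p}\rgamma_{/{\rm ur}}(K_v,A_n)\Big)[-1],$$
where $\rgamma_{/{\rm ur}}$ is formed with $T^{I_v}/\pi^n$. In degree $1$ a cochain is a triple $(x,(y_v),(z_v))$ with $x$ a $1$-cochain on $G_{K,S}$, $y_v$ a $0$-cochain of the local quotient term and $z_v$ a local $0$-cochain. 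The differential has the standard cone sign.

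\emph{Step 2: define $\varphi$ by lifting to cocycles.} We may replace $C_{\rm PT}$ by the quasi-isomorphic complex in which $H^1(G_{K,S},A_n)$ is replaced by the module $Z$ of pairs consisting of a $1$-cocycle $x$ and local trivialisations witnessing that ${\rm loc}_v(x)$ lies in the local condition. This is a fibre product, and one must check its cohomology is unchanged; Hypothesis \ref{hyp tech}(ii),(iii) give vanishing of the relevant $H^0$'s, so the choices are unique up to coboundary.

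An alternative, following \cite[Lemma 10.7]{BMC}, is cleaner. Write $C_{\rm PT}$ as a cone of the map $\lambda$, and map it via the natural morphism $\rgamma(G_{K,S},A_n)\to\tau_{\le1}\rgamma(G_{K,S},A_n)$ composed with the local maps. Here $H^1_{/\cF}(K_v,A_n)$ is identified with $H^1$ of the local term (for $v\in S_p$, the image of $H^1(K_v,A_n)$ in $H^1(K_v,A_{n,v}^-)$; for $v\notin S_p$, $H^1_{/{\rm ur}}$ via Hypothesis \ref{hyp tech}(iv)). Concretely, use truncations: replace each complex by $\tau_{\le 1}$ and compare with the cohomology in degree $1$. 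The degree-$2$ term of the local cone receives $\bigoplus_v H^1_{/\cF}(K_v,A_n)$, embedded as a submodule of $H^1$ of the local complexes. The sign convention for the cone is fixed here, and this is where the $-\kappa_2$ will come from.

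\emph{Step 3: compatibilities.} The $H^1$ diagram is immediate. A class in $H^1_\cF(K,T)=\widetilde H^1_\sF(K,T)$ (Lemma \ref{lem sel}(i)) is represented by a triple whose reduction mod $\pi^n$ is exactly the triple attached by $\varphi$ to its image in $H^1_\cF(K,A_n)$.

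For $H^2$, compute the connecting maps. An element of $\bigoplus_v H^1_{/\cF}(K_v,A_n)$ maps under $\nu$ via the local Tate pairing. On the Nekov\'a\v{r} side, $q_2$ composed with the duality isomorphism of Lemma \ref{lem sel}(ii) (\cite[Proposition 9.7.2]{nekovar}) is given by the cup product and the invariant map applied to the local components. Both agree up to the sign coming from the $[-1]$ shift in the cone, and this yields $-\kappa_2$. We expect this sign bookkeeping, that is, matching Nekov\'a\v{r}'s duality conventions with the Mazur--Rubin Poitou--Tate map $\nu$, to be the main obstacle. We would verify it on cocycles using Nekov\'a\v{r}'s explicit cup-product formulas.

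\emph{Step 4: quasi-isomorphism.} Both complexes are acyclic outside degrees $1,2$, by Lemma \ref{lem Cnek} and the definition of $C_{\rm PT}$. By the second diagram, $H^2(\varphi)$ is an isomorphism since $q_2$ and $\kappa_2$ are. For $H^1$, Step 2 yields a map of long exact sequences; alternatively we argue directly that $H^1(\varphi)$ is injective, since a cocycle in $H^1_\cF(K,A_n)$ mapping to a coboundary of the cone is a global coboundary, using $H^0$-vanishing from Hypothesis \ref{hyp tech}(ii),(iii). Then Lemma \ref{lem NekPT}(iii) gives $\#H^1(C_{\rm PT})=\#H^1(C_{\rm Nek}/\pi^n)$, so the injection $H^1(\varphi)$ is bijective. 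Independence of choices (of $S$ and of core vertex) follows from Propositions \ref{prop change S} and \ref{prop core}.
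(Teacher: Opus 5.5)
You have the same architecture as the paper: a truncated explicit model of $C_{\rm Nek}/\pi^n$ and $\varphi$ defined by lifting cocycles. The first diagram is immediate, $H^1(\varphi)$ is bijective by injectivity plus the count in Lemma~\ref{lem NekPT}(iii), and $H^2(\varphi)$ is bijective by the second diagram.

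Your Step~2 is muddled, though. There is no natural map $\rgamma(G_{K,S},A_n)\to\tau_{\le1}\rgamma(G_{K,S},A_n)$; the natural map goes the other way. The clean version is the paper's. Because $H^0(G_{K,S},T)$, $H^0(K_v,T_v^-)$ and $H^0(\rgamma_{/{\rm ur}}(K_v,T))$ vanish, you may replace every term by its quotient truncation $\tau_{\ge1}$. Then $\varphi$ is simply the inclusion of degree-one cohomology on the global and local sides. The local component carries a minus sign, forced by the cone differential $d^1_{\rm Nek}(\overline c)=(-(\overline{c_v})_v,d^1c)$. Note also that for $v\notin S_p$, both the $H^0$-vanishing and the identification of $H^1$ of the local term with $H^1_{/\cF}(K_v,A_n)$ use Hypothesis~\ref{hyp tech}(iv), not only (ii) and (iii).

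The genuine gap is the second diagram, which is half of the statement. You assert that $q_2$ followed by the duality of Lemma~\ref{lem sel}(ii) ``is given by the cup product and the invariant map on the local components'' and that the two sides ``agree up to the sign from the $[-1]$ shift''. That assertion is the entire content to be proved, and you leave it as an expectation. It cannot be read off from the shift. The map $\kappa_2$ is defined through Nekov\'a\v{r}'s Selmer-complex duality, whose pairings on cones have their own sign conventions. So you need a compatibility theorem, not raw cocycle cup-product formulas.

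The paper supplies this in four steps:
\begin{itemize}
\item[(1)] It uses the triangle $\rgamma_c(G_{K,S},T)\xrightarrow{\alpha}C_{\rm Nek}\to\bigoplus_{v\in S_p}\rgamma(K_v,T_v^+)\oplus\bigoplus_{v}\rgamma_{\rm ur}(K_v,T)$. Here $H^2(\alpha)$ is surjective because $H^2(K_v,T_v^+)=0$.
\item[(2)] By \cite[Proposition 6.3.3]{nekovar}, $H^2(\alpha)$ intertwines the compact-support duality $\sigma$ with the Selmer duality followed by $\iota^\vee$; this is the square (\ref{gdsquare}).
\item[(3)] The relaxed-condition version of the same result gives $\sigma_n\circ\delta=\Lambda^\ast\circ u$, the square (\ref{lgsquare}). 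Here $\delta$ is the connecting map $\bigoplus_v H^1(K_v,A_n)\to H^2_c(G_{K,S},A_n)$.
\item[(4)] A cocycle computation, using the minus sign in $\varphi^2$, shows that $H^2(\alpha)^{-1}\circ q_2^{-1}\circ H^2(\varphi)$ is $-\delta$ modulo local conditions. Together with $\nu=\Lambda^\ast\circ\overline u$, this gives $\kappa_2\circ q_2^{-1}\circ H^2(\varphi)=-\nu$.
\end{itemize}

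Without this, your Step~4 argument for $H^2(\varphi)$ rests on an unproved diagram. You could decouple the quasi-isomorphism from the sign. In the truncated model, if $\varphi^2(x)=d^1_{\rm Nek}(\overline c)$, then $\overline c$ defines a global class $y$ with $\lambda(y)=x$, so $H^2(\varphi)$ is injective. Moreover $\#H^2(C_{\rm Nek}/\pi^n)=\#H^1_{\cF^\ast}(K,A_n^\ast(1))^\ast=\#\coker\lambda$, so $H^2(\varphi)$ is bijective. The sign $-\kappa_2$ in the second diagram, however, still requires the duality argument above.
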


\subsection{Proof of Theorem \ref{qithm}}\label{sec pf qi}

In this subsection, we give a proof of Theorem \ref{qithm}. 

We first note the following. Since we assume Hypothesis \ref{hyp core}, there exists a core vertex $\fn$. Note that, by Proposition \ref{prop change S}, we may assume that $S$ is sufficiently large so that it contains all prime divisors of $\fn$. Then, in particular, Hypothesis \ref{hyp surj} is satisfied by Proposition \ref{prop core}(ii). 

The proof proceeds as follows. In \S \ref{step1}, we give some preliminaries. In \S \ref{step2}, we give an explicit representative of Nekov\'a\v{r}'s Selmer complex $C_{\rm Nek}$. In \S \ref{step3}, we give a definition of the morphism $\varphi$. In \S \ref{step4}, we prove the commutativity of the diagrams in Theorem \ref{qithm}. Finally, in \S \ref{step5}, we prove that $\varphi$ is a quasi-isomorphism and complete the proof. 

\subsubsection{Preliminaries}\label{step1}

We begin with the following observation. 

\begin{lemma}\label{tflemma} Assume Hypothesis \ref{hyp tech}. Then the following claims are valid.
\begin{itemize}
    \item[(i)] For each $v\in S_p$, the $\cO$-module $H^1(K_v,T_v^-)$ is torsion-free.
    \item[(ii)] For each $v\in S_f\setminus S_p$, the $\cO$-module $H^1_{/{\rm ur}}(K_v,T)$ is torsion-free.
    \item[(iii)] The $\cO$-module $H^1(G_{K,S},T)$ is torsion-free.
\end{itemize}
\end{lemma}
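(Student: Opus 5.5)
The common mechanism for all three claims is that, for a $G$-module $M$ free over $\cO$, the torsion of $H^1(G,M)$ is controlled by $H^0(G,M\otimes\Phi/\cO)$. I would use the exact sequence $0\to M\xrightarrow{\pi} M\to M/\pi\to 0$, whose long exact cohomology sequence gives a surjection
$$H^0(G,M/\pi)\twoheadrightarrow H^1(G,M)[\pi].$$
Hence it suffices to show $H^0(G,M/\pi)=0$ in each case. Since $M/\pi\subset M\otimes\Phi/\cO$ as the $\pi$-torsion, it is enough to check $H^0(G,M\otimes \Phi/\cO)=0$. An $\cO$-module whose $\pi$-torsion vanishes is torsion-free, which gives the claims.

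(iii) Take $G=G_{K,S}$ and $M=T$. Then $T\otimes\Phi/\cO=A$, and $H^0(K,A)=0$ by Hypothesis \ref{hyp tech}(i). Since $A$ is unramified outside $S$, $H^0(G_{K,S},A)=H^0(K,A)=0$.

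(i) Take $G=G_{K_v}$ and $M=T_v^-$, which is free over $\cR$ and hence over $\cO$. Then $M\otimes\Phi/\cO=A_v^-$, and $H^0(K_v,A_v^-)=0$ by Hypothesis \ref{hyp tech}(iii).

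(ii) This case needs more care, because $H^1_{/{\rm ur}}(K_v,T)$ is the cohomology of a cone. The cone gives an exact sequence
$$H^1_{\rm ur}(K_v,T)\to H^1(K_v,T)\to H^1_{/{\rm ur}}(K_v,T)\to H^2_{\rm ur}(K_v,T)=0,$$
where the last term vanishes because $\Gal(K_v^{\rm ur}/K_v)\simeq\hat\ZZ$ has cohomological dimension one. So $H^1_{/{\rm ur}}(K_v,T)=H^1(K_v,T)/H^1_{\rm ur}(K_v,T)$, and by Hypothesis \ref{hyp tech}(iv) this equals $H^1(K_v,T)/H^1_f(K_v,T)$.

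I would then invoke the standard fact that $H^1_f(K_v,T)$ is saturated in $H^1(K_v,T)$. By definition (as in \cite[\S 1.3]{R}), $H^1_f(K_v,T)$ is the preimage of $H^1_f(K_v,V)$ under $H^1(K_v,T)\to H^1(K_v,V)$. Suppose $\pi x\in H^1_f(K_v,T)$. Then the image of $x$ in $H^1(K_v,V)$, a $\Phi$-vector space, lies in $H^1_f(K_v,V)$, so $x\in H^1_f(K_v,T)$. Hence the quotient $H^1(K_v,T)/H^1_f(K_v,T)$ embeds into $H^1(K_v,V)/H^1_f(K_v,V)$ and is torsion-free.

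The main subtlety is this last step: it relies on Hypothesis \ref{hyp tech}(iv) to replace the unramified condition by the Bloch–Kato one, since $H^1_{\rm ur}(K_v,T)$ itself need not be saturated when there is a nontrivial Tamagawa factor.
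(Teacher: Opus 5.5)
Your proposal is correct and follows essentially the same route as the paper. For (i) and (iii) it uses the surjection $H^0(G,M/\pi)\twoheadrightarrow H^1(G,M)[\pi]$ together with the embedding $M/\pi\hookrightarrow M\otimes_\cO\Phi/\cO$. For (ii) it uses Hypothesis \ref{hyp tech}(iv) plus torsion-freeness of $H^1(K_v,T)/H^1_f(K_v,T)$, and you make explicit two points the paper leaves implicit: that $H^2_{\rm ur}(K_v,T)=0$ identifies $H^1_{/{\rm ur}}(K_v,T)$ with $H^1(K_v,T)/H^1_{\rm ur}(K_v,T)$, and that $H^1_f(K_v,T)$, being the preimage of a $\Phi$-subspace, has torsion-free cokernel.
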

\begin{proof}
To prove claim (i) it suffices to fix $v\in S_p$ and prove that the action of $\pi$ defines an injective endomorphism of $H^1(K_v,T_v^-)$. The short exact sequence $0\to T_v^-\xrightarrow{\pi}T_v^-\to A_{1,v}^-\to 0$ induces an exact sequence $H^0(K_v,A_{1,v}^-)\to H^1(K_v,T_v^-)\xrightarrow{\pi}H^1(K_v,T_v^-)$. But the map $\cO/\pi\cO\to\Phi/\cO$ induced by multiplication by $\pi^{-1}$ is injective and, since $T_v^-$ is $\cO$-flat, induces an injective map
$$A_{1,v}^-\cong(\cO/\pi\cO)\otimes_\cO T_v^-\longrightarrow(\Phi/\cO)\otimes_\cO T_v^-\cong A_v^-.$$
In turn, this induces an injective map $H^0(K_v,A_{1,v}^-)\to H^0(K_v,A_{v}^-)$. Since we have $H^0(K_v,A_v^-)=0$ by Hypothesis \ref{hyp tech}(iii),  this completes the proof of claim (i).

Given $v\in S_f\setminus S_p$, claim (ii) follows from 
the fact that $H^1(K_v,T)/H^1_f(K_v,T)$ is $\cO$-torsion-free {\color{black}and Hypothesis \ref{hyp tech}(iv)}.

{\color{black}
Claim (iii) follows from $H^0(K,A)=0$ (by Hypothesis \ref{hyp tech}(i)) and the same argument that was used to prove claim (i).
}
\end{proof}

For each place $v$ of $K$ we abbreviate $G_{K_v}$ to $G_v$. In the next intermediate result we let $$(G,X,X_n)\in\{(G_{K,S},T,A_n)\}\cup\{(G_v,T_v^-,A_{n,v}^-)\}_{v\in S_p}\cup\{(G_v,T,A_{n})\}_{v\in S_f\setminus S_p}.$$
For $Y\in\{X,X_n\}$ we then define $d^0:\rgamma^0(G,Y)=Y\to \rgamma^1(G,Y)$ by $(d^0(y))(g)=g\cdot y-y$ for each $y\in Y$, $g\in G$, and $d^1:\rgamma^1(G,Y)\to \rgamma^2(G,Y)$ by $$(d^1(\rho))(g,h)=g\cdot\rho(h)-\rho(gh)+\rho(g)$$
for each $\rho\in\rgamma^1(G,Y)$ and $g,h\in G$.

\begin{lemma}\label{lifts} There is an isomorphism $l$ of $\cR$-modules that makes the triangle
\begin{equation*}
\xymatrix{& \rgamma^1(G,X) \ar@{->}[dl]_{\mu}  \ar@{->}[dr]^{\xi} \\ \rgamma^1(G,X_n)  \ar@{->}[rr]^{l} & &(\cO/\pi^n\cO)\otimes_{\cO}\rgamma^{1}(G,X)  }
\end{equation*}
commute, where $\mu$ and $\xi$ are the canonical maps. In addition, $l$ restricts to give homomorphisms
$$\ker(d^1)\to\ker((\cO/\pi^n\cO)\otimes d^1)\,\,\,\,\,\,\text{ and }\,\,\,\,\,\,\im(d^0)\to\im((\cO/\pi^n\cO)\otimes d^0).$$
\end{lemma}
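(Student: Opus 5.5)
The plan is to construct $l$ as the inverse of the natural reduction map on cochains, and then read off both restriction properties from compatibility of that map with the differentials. Concretely, since $\rgamma^1(G,Y)$ is the module of continuous maps $G\to Y$, I would consider the natural map
$$\iota:(\cO/\pi^n\cO)\otimes_{\cO}\rgamma^{1}(G,X)=\rgamma^1(G,X)/\pi^n\rgamma^1(G,X)\longrightarrow \rgamma^1(G,X_n),$$
which sends the class of $\rho$ to its composition with $X\twoheadrightarrow X/\pi^nX=X_n$. It is $\cR$-linear, and $\iota\circ\xi=\mu$ holds by construction. Once $\iota$ is shown to be bijective, I set $l:=\iota^{-1}$, and the commutativity $l\circ\mu=\xi$ is then automatic.

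\emph{Injectivity of $\iota$.} Suppose $\rho:G\to X$ is continuous with values in $\pi^nX$. Because $X$ is a free $\cO$-module of finite rank (as are $T$ and $T_v^-$), multiplication by $\pi^n$ is a homeomorphism $X\to\pi^nX$ onto a closed subset. Hence $\rho=\pi^n\rho'$ with $\rho'=\pi^{-n}\rho$ continuous, so $\rho\in\pi^n\rgamma^1(G,X)$.

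\emph{Surjectivity of $\iota$.} Since $X_n$ is finite and discrete, a continuous $\bar\rho:G\to X_n$ is locally constant. Thus it factors through $G/U$ for some open normal subgroup $U$. Choosing any set-theoretic section $s:X_n\to X$, the map $s\circ\bar\rho$ is still locally constant, hence continuous, and it lifts $\bar\rho$.

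The same injectivity argument applies verbatim in every degree. In particular, the natural map $\rgamma^2(G,X)/\pi^n\to\rgamma^2(G,X_n)$ is injective, and the analogous statement holds in degree $0$, where it is trivial.

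For the restriction to $\ker(d^1)$, let $\bar\rho\in\ker(d^1)\subseteq\rgamma^1(G,X_n)$, and let $\rho\in\rgamma^1(G,X)$ be a lift, so that $l(\bar\rho)$ is the class of $\rho$. Since reduction modulo $\pi^n$ commutes with the explicit formula for $d^1$, the image of $d^1\rho$ in $\rgamma^2(G,X_n)$ equals $d^1\bar\rho=0$. By the degree-$2$ injectivity, $d^1\rho\in\pi^n\rgamma^2(G,X)$, i.e.\ $((\cO/\pi^n\cO)\otimes d^1)(l(\bar\rho))=0$.

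For the restriction to $\im(d^0)$, write an element as $d^0\bar y$ with $\bar y\in X_n$, and lift $\bar y$ to $y\in X$. Then $d^0y$ reduces to $d^0\bar y$, so $l(d^0\bar y)$ is the class of $d^0y$, which equals $((\cO/\pi^n\cO)\otimes d^0)(1\otimes y)$.

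The only point requiring any care is the topological one: the quotient by $\pi^n$ of the continuous cochains must agree with the continuous cochains of the quotient. This rests on $X$ being $\cO$-free of finite rank, which gives closedness of $\pi^nX$ and the homeomorphism $\pi^n$, and on $X_n$ being finite. Everything else is a formal diagram chase.
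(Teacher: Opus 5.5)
Your proof is correct and is essentially the paper's argument run in the opposite direction. The paper defines $l$ directly by lifting cochains through $X\to X_n$, checks that this is well defined (your injectivity of $\iota$), and deduces bijectivity from the surjectivity of $\mu$ (your surjectivity of $\iota$). You also spell out the degree-$2$ injectivity needed for the restriction to $\ker(d^1)$, which the paper leaves as ``straightforward''.
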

\begin{proof}To first prove that there is a homomorphism $l$ that makes the given triangle commute we fix $\rho\in \rgamma^1(G,X_n)$ and claim that the element $1\otimes\rho'$ of $(\cO/\pi^n\cO)\otimes_{\cO}\rgamma^{1}(G,X)$ is independent of the choice of continuous lift $\rho'$ of $\rho$ through the canonical projection $\varpi:X\to X_n$ (such lifts exist since $X_n$ is a discrete space). 

We let $\rho'$ and $\rho''$ denote continuous lifts of $\rho$ through $\varpi$.
Since $\varpi\circ\rho'=\rho=\varpi\circ\rho''$ we have $\varpi\circ(\rho'-\rho'')=0$. It follows that $$(\rho'-\rho'')(g)\in\ker(\varpi)=\pi^n\cdot X$$ for all $g\in G$ and, since $X$ is $\cO$-free, there exists a unique $x_g\in X$ for which $$(\rho'-\rho'')(g)= \pi^nx_g.$$ We then obtain a continuous function $x:G\to X$ by setting $x(g):=x_g$. Then \begin{equation}\label{previousparagraph}1\otimes(\rho'-\rho'')=1\otimes \pi^nx=\pi^n\otimes x=0\otimes x,\end{equation} so $1\otimes\rho'=1\otimes\rho''$. {\color{black}Hence we have proved that the homomorphism $l$ given by $l(\rho):=1\otimes \rho'$ is well-defined.}

Next we observe that both $\mu$ (again, because $X_n$ is a discrete space) and $\xi$ are surjective. Thus, the kernel-cokernel exact sequence of the given commutative triangle is of the form
$$0\to\ker(\mu)\to\ker(\xi)\xrightarrow{\mu}\ker(l)\to0$$
and, in particular, $l$ is surjective. In addition,
$$\ker(l)=\mu(\ker(\xi))=\mu(\pi^n\cdot \rgamma^1(G,X))=\pi^n\cdot\im(\mu)=0.$$
This shows that $l$ is an isomorphism. The additional claims are straightforward to verify explicitly.
\end{proof}

\subsubsection{An explicit representative}\label{step2}

In order to explicitly define the morphism $\varphi$ in Theorem \ref{qithm} we first describe an explicit representative of $C_{\rm Nek}$. The claimed result is clearly independent of such a choice.

For any $v\in S_p$ we set
$$C_v^2:=\rgamma^1(G_v,T_v^-)/\im(d^0_v)$$ and also, for any $i\geq 3$, $C_v^{i}:=\rgamma^{i-1}(K_v,T_v^-)$. Here $d^0_v$ is the map $d_0$ for $(G,Y)=(G_v,T_v^-)$.

For any $v\in S_f\setminus S_p$ we set
$$C_v^2:=\frac{\left(\rgamma^{1}(G_v,T)/\im(d^0_v)\right)}{{\color{black}H^1_{\rm ur}(K_v,T)}}$$
and also, for any $i\geq 3$, $C_v^i:=\rgamma^{i-1}(G_v,T)$. Here $d^0_v$ is the map $d_0$ for $(G,Y)=(G_v,T)$.

Before proceeding we note that for each $v\in S_f$ and $i\geq 2$, the $\cO$-module $C_v^{i}$ is torsion-free. This is clear for $i\geq 3$ and follows from Lemma \ref{tflemma}(i) and the exact sequence
$$0\to H^1(K_v,T_v^-)\to C_v^2\to \rgamma^1(G_v,T_v^-)/\ker(d^1_v)\to0$$
if $i=2$ and $v\in S_p$ because the third term is clearly torsion-free. Similarly, if $i=2$ and $v\in S_f\setminus S_p$, then it follows from Lemma \ref{tflemma}(ii) and the exact sequence
$$0\to H^1_{/{\rm ur}}(K_v,T)\to C_v^2\to \rgamma^1(G_v,T)/\ker(d^1_v)\to0.$$

Now, for each $v\in S_p$, since $H^0(K_v,T_v^-)=0$ {\color{black}by Hypothesis \ref{hyp tech}(ii)}, it is clear that $\rgamma(K_v,T_v^-)$ is isomorphic in $D(\cR)$ to the complex $D^\bullet_v$ with $D_v^i=C_v^{i+1}$ in each degree $i\geq 1$, with differentials induced by the usual coboundary operators $d^{i}_v$, and with $D_v^{i}=0$ for $i\leq 0$. Similarly, for each $v\in S_f\setminus S_p$, since {\color{black}$H^0_{/{\rm ur}}(K_v,T)=0$}, $\rgamma_{/{\rm ur}}(K_v,T)$ is isomorphic in $D(\cR)$ to the complex $D^\bullet_v$ with $D_v^i=C_v^{i+1}$ in each degree $i\geq 1$, with differentials induced by the usual coboundary operators $d^{i}_v$, and with $D_v^{i}=0$ for $i\leq 0$.

Then, since $\rgamma(G_{K,S},T)$ is acyclic in degrees less than one {\color{black} by $H^0(K,T)=0$ (which is implied by $H^0(K,A)=0$ in Hypothesis \ref{hyp tech}(i))}, we may obtain an explicit representative of $C_{\rm Nek}$ as the $(-1)$-shift of the explicit mapping cone of the diagonal localization morphism
$$\tau_{\geq 1}\bigl(\rgamma(G_{K,S},T)\bigr)\to{\bigoplus}_{v\in S_f}D_v^\bullet.$$
(Here $\tau_{\geq 1}$ denotes the truncation in degree greater than or equal to 1 preserving cohomology.)

To be explicit about this construction of a representative of $C_{\rm Nek}$, we set
\[
C_{\rm Nek}^i :=
\begin{cases}
  0,    &\text{ if $i\leq 0$}\\
  \rgamma^1\bigl(G_{K,S},T\bigr)/{\rm im}(d^0),    &\text{ if $i=1$}\\
\bigl(\bigoplus_{v\in S_f}C^i_v\bigr)\oplus \rgamma^i\bigl(G_{K,S},T\bigr),    &\text{ if $i\geq 2$.}
\end{cases}
\]
We let the differential $d_{\rm Nek}^1$ in degree $1$ be given by
\[d_{\rm Nek}^1(\overline{c}):=\bigl(-(\overline{c_{v}})_{v\in S_f},d^1(c)\bigr)\]
for $c\in\rgamma^1(G_{K,S},T)$, and the differential $d_{\rm Nek}^i$ in degree $i\geq 2$ be given by
\[d_{\rm Nek}^i(\bigl((a_{v})_{v\in S_f},b\bigr)):=\bigl(-(d^{i-1}_{v}(a_{v})+b_{v})_{v\in S_f},d^i(b)\bigr)\]
for $a_v\in C_v^{i}$ and $b\in\rgamma^i(G_{K,S},T)$. In any degree $i$ and for any $v\in S_p$, we have also written $x_{v}$ for the image in $\rgamma^{i}(G_v,T_v^-)$ of the localization at $v$
of an inhomogeneous $i$-cochain $x$ in $\rgamma^i\bigl(G_{K,S},T\bigr)$; while, for $v\in S_f\setminus S_p$, we have written $x_{v}\in \rgamma^i(G_v,T)$ for the localization at $v$ of $x$.

Before proceeding we note that each $\cO$-module {\color{black}$C_{\rm Nek}^{i}$} is torsion-free. This is clear for $i\neq 1$ and follows from Lemma \ref{tflemma}(iii) and the exact sequence
$$0\to H^1(G_{K,S},T)\to C_{\rm Nek}^1\to \rgamma^1(G_{K,S},T)/\ker(d^1)\to0$$
if $i=1$ because the third term is clearly torsion-free.
In particular, after setting $\cO_n:=\cO/\pi^n\cO$, the complex $C_{\rm Nek}/\pi^n$ may be represented by the sequence of modules $\cO_n\otimes_{\cO}C_{\rm Nek}^i$ together with the differentials $\cO_n\otimes_{\cO}d_{\rm Nek}^i$.

\subsubsection{Definition of the morphism}\label{step3}

In order to define the claimed morphism $\varphi$, we first define $$\varphi^1:H^1(G_{K,S},A_n)\to \cO_n\otimes_{\cO}C_{\rm Nek}^1$$ as follows.
Given $x\in H^1(G_{K,S},A_n)$ we fix a 1-cocycle $y\in \rgamma^1(G_{K,S},A_n)$ representing $x$ and use the map $l_{K,S}$ defined in Lemma \ref{lifts} (with $(G,X,X_n)=(G_{K,S},T,A_n)$) below to define $\varphi^1(x)$ as the class in $\cO_n\otimes_{\cO}C_{\rm Nek}^1$ of $l_{K,S}(y)\in \cO_n\otimes_{\cO}\rgamma^1(G_{K,S},T)$.

Before proceeding we note that Lemma \ref{lifts} ensures both that $\varphi^1$ is well-defined and also that
\begin{equation}\label{itisacocycle}(\cO_n\otimes_{\cO}d^1)(l_{K,S}(y))=0.
\end{equation}
Moreover, we claim that $\varphi^1$ is injective. To prove this we write $d^0_n:A_n\to \rgamma^1(G_{K,S},A_n)$ for the map given by $(d^0_n(a))(\sigma)=\sigma\cdot a-a$ for any $a\in A_n$ and $\sigma\in G_{K,S}$ and recall that $H^1(G_{K,S},A_n)$ is a subgroup of $\coker(d^0_n)$. We assume that $l_{K,S}(y)$ belongs to $$\cO_n\otimes_{\cO}\im(d^0)=\im(\cO_n\otimes_{\cO}d^0)$$ and proceed to prove that $y$ then belongs to $\im(d^0_n)$, so that $x=0$.

For this we note that every element of ${\color{black}A_n=\cO_n\otimes_{\cO}T}$ is of the form $1\otimes t$ with $t\in T$ and assume that
$$l_{K,S}(y)=(\cO_n\otimes_{\cO}d^0)(1\otimes t)=1\otimes d^0(t)=\xi(d^0(t))=l_{K,S}(\mu(d^0(t)),$$
with the last equality by the commutativity of the diagram in Lemma \ref{lifts}. Since $l_{K,S}$ is injective, it follows that $y=\mu(d^0(t))$ and, writing $\varpi:T\to A_n$ for the canonical projection, we compute for any $\sigma\in G_{K,S}$ that
$$y(\sigma)=(\mu(d^0(t)))(\sigma)={\color{black}\varpi ( d^0(t)(\sigma))}=\varpi(\sigma\cdot t-t)=\sigma\cdot\varpi(t)-\varpi(t)=(d^0_n(\varpi(t)))(\sigma).$$
We conclude that $y=d^0_n(\varpi(t))\in\im(d^0_n)$, as required.

We next define $$\varphi^2:{\bigoplus}_{v\in S_f}H^1_{/\cF}\bigl(K_v,A_n)\to \cO_n\otimes_{\cO}C_{\rm Nek}^2$$ as follows.
Given $(x_{v})_{v\in S_f}$ in
\[{\bigoplus}_{v\in S_f}H^1_{/\cF}\bigl(K_v,A_n)={\bigoplus}_{v\in S_f}\frac{H^1\bigl(K_v,A_n\bigr)}
{H^1_{\cF}(K_v,A_n)}\]
we fix a family of 1-cocycles
$$(y_{v})_{v\in S_f}\in {\color{black} \bigoplus_{v\in S_p} \rgamma^1(G_v,A_{n,v}^-) \oplus \bigoplus_{v\in S_f\setminus S_p}\rgamma^1(G_v,A_n)}$$
representing $(x_{v})_{v\in S_f}$. 
{\color{black}(Here for $v\in S_p$ we regard $x_v \in H^1(K_v,A_{n,v}^-)$ via the natural injection $H^1(K_v,A_n)/ H^1_\cF(K_v,A_n) \hookrightarrow H^1(K_v,A_{n,v}^-)$.)} 
We let $l_v$ denote the map defined by Lemma \ref{lifts} for $(G,X,X_n)=(G_v,T_v^-,A_{n,v}^-)$ if $v\in S_p$ and for $(G,X,X_n)=(G_v,T,A_{n})$ if $v\in S_f\setminus S_p$. 
We then define $\varphi^2((x_{v})_{v\in S_f})$ as the class in $\cO_n\otimes_{\cO}C_{\rm Nek}^2$ of $(-(l_{v}(y_{v}))_{v\in S_f},0)$. Again one can use Lemma \ref{lifts} to verify that $\varphi^2$ is well-defined.
%
%

Our explicit description of the differential $d_{\rm Nek}^1$ now implies that, given $x\in H^1(G_{K,S},A_n)$, one has
\begin{align*} \bigl((\cO_n\otimes_{\cO}d_{\rm Nek}^1)\circ\varphi^1\bigr)(x) &=\bigl(-(\overline{l_{K,S}(y)_{v}})_{v},(\cO_n\otimes_{\cO}d^1)(l_{K,S}(y))\bigr)\\
&=\bigl(-(\overline{l_{K,S}(y)_{v}})_{v},0\bigr)\\
&=\bigl(-(\overline{l_{v}(y_{v})})_{v},0\bigr)\\
&=\varphi^2((\overline{y_{v}})_{v})\\
&=\varphi^2(\lambda(\overline{y}))\\
&=(\varphi^2\circ\lambda)(x).\end{align*}
Here again $y$ is a 1-cocycle representing $x$, the second equality follows from (\ref{itisacocycle}) and the third equality holds because the maps $l_{K,S}$ and $l_{v}$ commute with localization at each $v$. Recall that $\lambda$ denotes the localization map $H^1(G_{K,S},A_n)\to  \bigoplus_{v\in S_f}H^1_{/\cF}(K_v,A_n)$.

Putting then $\varphi^i=0$ for $i\neq 1,2$ therefore defines a morphism of complexes of $\cR_n$-modules $$\varphi:C_{{\rm PT}}\to C_{\rm Nek}/\pi^n.$$ Indeed, using Lemma \ref{lifts} one verifies that
\[ \bigl((\cO_n\otimes_{\cO}d_{\rm Nek}^2)\circ\varphi^2\bigr)((x_{v})_{v})=0\] for any $(x_{v})_{v}$ in ${\bigoplus}_{v\in S_f}H^1_{/\cF}\bigl(K_v,A_n)$.

\subsubsection{Commutativity of the diagrams}\label{step4}

We next prove that the given diagrams in Theorem \ref{qithm} commute. In fact, the commutativity of the first diagram is immediate from the explicit definition of the maps involved and from the commutativity of the triangle in Lemma \ref{lifts}.

Turning now to the second diagram, for $X\in\{T\}\cup\{A_m\}_{m\in\NN}$ we set
$$\rgamma_c(G_{K,S},X):= {\rm Cone}\left(\rgamma(G_{K,S},X)\to \bigoplus_{v\in S_f}\rgamma(K_v,X)\right)[-1].$$
In particular, we derive a canonical exact triangle in $D(\cR)$ of the form
\begin{equation}\label{compacttriangle}\rgamma_c(G_{K,S},T)\stackrel{\alpha}{\longrightarrow} C_{\rm Nek}\longrightarrow{\bigoplus}_{v\in S_p}\rgamma(K_v,T_v^+)\oplus{\bigoplus}_{v\in S_f}\rgamma_{\rm ur}(K_v,T).\end{equation}

For each $m\geq 1$, we will use the canonical isomorphism
$$\sigma_m:H^2_c(G_{K,S},A_m)\cong H^1(G_{K,S},A_m^*(1))^*$$
{\color{black}which is induced by the morphism 
$$\widetilde \sigma_m: \rgamma_c(G_{K,S},A_m)\to \rhom_{\cO_m}(\rgamma(G_{K,S},A_m^\ast(1)) , \cO_m)[-3]$$
defined in \cite[\S 5.4.2]{nekovar}.}
The limit $\varprojlim_m\sigma_m$ then defines an isomorphism
$$\sigma:H^2_c(G_{K,S},T)\cong\varprojlim_m H^2_c(G_{K,S},A_m)\cong\varprojlim_m H^1(G_{K,S},A_m^*(1))^*\cong H^1(G_{K,S},T^\vee(1))^\vee.$$
By the second diagram in \cite[Proposition 6.3.3]{nekovar}, the global duality isomorphism in Lemma \ref{lem sel}(ii) makes the following square commute
\begin{equation}\label{gdsquare}\begin{CD}
H^2_c(G_{K,S},T) @> \sigma >> H^1(G_{K,S},T^\vee(1))^\vee\\
@V H^2(\alpha) VV @VV \iota^\vee V\\
H^2(C_{\rm Nek}) @> \sim >> H^1_{\cF^\ast}(K,T^\vee(1))^\vee.
\end{CD}\end{equation}
Here the vertical arrows are induced by the exact triangle (\ref{compacttriangle}) and by the dual of the inclusion $\iota:H^1_{\cF^\ast}(K,T^\vee(1))\hookrightarrow H^1(G_{K,S},T^\vee(1))$, respectively.

We also recall the explicit definition of the map $\nu$ that occurs in the exact sequence (\ref{PT}). The local Tate duality pairings define an isomorphism
$$u:{\bigoplus}_{v\in S_f}H^1(K_v,A_n)\cong{\bigoplus}_{v\in S_f}H^1(K_v,A_n^*(1))^*$$ that, by \cite[Theorem 2.3.4(ii)]{MRkoly}, induces an isomorphism
$$\overline{u}:{\bigoplus}_{v\in S_f}H^1_{/\cF}(K_v,A_n)\cong{\bigoplus}_{v\in S_f}H^1_{\cF^*}(K_v,A_n^*(1))^*.$$
Denoting by $\Lambda$ the localization map $$H^1_{\cF^*}(K,A_n^*(1))\to{\bigoplus}_{v\in S_f}H^1_{\cF^*}(K_v,A_n^*(1)),$$ one then has $\nu=\Lambda^*\circ\overline{u}$. We abuse notation to regard $\nu$ also as the isomorphism 
\begin{equation}\label{abuse nu}
H^2(C_{\rm PT})=\coker(\lambda)\cong H^1_{\cF^*}(K,A_n^*(1))^*
\end{equation}
that is induced by $\nu$ (under the validity of Hypothesis \ref{hyp surj}). 


The second diagram in \cite[Proposition 6.3.3]{nekovar} (with the ``relaxed" local condition, namely, $U_S^+(A_n):= \bigoplus_{v\in S_f}\rgamma(K_v,A_n)$) gives rise to the following commutative diagram with exact rows:
\begin{equation}\label{duality diagram}
\xymatrix{
\rgamma_c(G_{K,S},A_n) \ar[r] \ar[d]_{\widetilde \sigma_n}&  \rgamma(G_{K,S},A_n) \ar[d]\ar[r]&\bigoplus_{v\in S_f}\rgamma(K_v,A_n)\ar[d]^{\widetilde u} \\
\rgamma(G_{K,S},A_n^\ast(1))^\ast[-3] \ar[r]& \rgamma_c(G_{K,S},A_n^\ast(1))^\ast[-3] \ar[r]& \bigoplus_{v\in S_f} \rgamma(K_v,A_n^\ast(1))^\ast[-2],
}
\end{equation}
where (by an abuse) we abbreviate $\rhom_{\cO_n}(-,\cO_n)$ to $(-)^\ast$, and $\widetilde \sigma_n$ and $\widetilde u$ are the duality morphisms which induce $\sigma_n$ and $u$ respectively. 
This diagram implies that the following square commutes:
\begin{equation}\label{lgsquare}\begin{CD}
{\bigoplus}_{v\in S_f}H^1(K_v,A_n) @> u >> {\bigoplus}_{v\in S_f}H^1(K_v,A_n^*(1))^*\\
@V \delta VV @VV \Lambda^* V\\
H^2_c(G_{K,S},A_n) @> \sigma_n >> H^1(G_{K,S},A_n^*(1))^*.
\end{CD}\end{equation}
Here $\delta$ is the canonical connecting homomorphism associated with the first row in (\ref{duality diagram}), 
and we have abused notation to also write $\Lambda$ for the localization map $$H^1(G_{K,S},A_n^*(1))\to{\bigoplus}_{v\in S_f}H^1(K_v,A_n^*(1)).$$

Note that the map
$$H^2(\alpha): H^2_c(G_{K,S},T)\to H^2(C_{\rm Nek})$$
is surjective, since we have $H^2(K_v,T_v^+) \simeq H^0(K_v,(T_v^+)^\vee(1))^\vee =0$ for all $v\in S_p$ by Hypothesis \ref{hyp tech}(iii).  We then abuse notation to write $H^2(\alpha)^{-1}$ for the inverse of the isomorphism
$$\frac{H^2_c(G_{K,S},T)}{\im\left({\bigoplus}_{v\in S_f}H^1_{\cF}(K_v,T)\right)}\cong H^2(C_{\rm Nek})$$
that is induced by $H^2(\alpha)$.

Then an explicit computation using our definition of $\varphi^2$ (and the commutativity of the triangle in Lemma \ref{lifts}) shows that the composition
\begin{multline*}\bigoplus_{v\in S_f}H^1(K_v,A_n)\to \bigoplus_{v\in S_f}H^1_{/\cF}(K_v,A_n)\to H^2(C_{{\rm PT}}) \xrightarrow{H^2(\varphi)}H^2(C_{\rm Nek}/\pi^n)\xrightarrow{q_{2}^{-1}}H^2(C_{\rm Nek})/\pi^n\\ \xrightarrow{H^2(\alpha)^{-1}/\pi^n}\frac{H^2_c(G_{K,S},T)/\pi^n}{\im\left({\bigoplus}_{v\in S_f}H^1_{\cF}(K_v,T)/\pi^n\right)}\to\frac{H^2_c(G_{K,S},A_n)}{\im\left({\bigoplus}_{v\in S_f}H^1_{\cF}(K_v,A_n)\right)}\end{multline*}
coincides with $-1$ times the canonical map that is induced by the connecting homomorphism $\delta$ (that occurs in (\ref{lgsquare})).

From the description $\nu=\Lambda^*\circ\overline{u}$ and the commutativity of (\ref{lgsquare}) it thus follows that the composition
\begin{multline*}
{\color{black}\bigoplus_{v\in S_f}H^1_{/\cF}(K_v,A_n)\to} H^2(C_{{\rm PT}}) \xrightarrow{H^2(\varphi)}H^2(C_{\rm Nek}/\pi^n)\xrightarrow{q_{2}^{-1}}H^2(C_{\rm Nek})/\pi^n \\
\xrightarrow{H^2(\alpha)^{-1}/\pi^n}\frac{H^2_c(G_{K,S},T)/\pi^n}{\im\left({\bigoplus}_{v\in S_f}H^1_{\cF}(K_v,T)/\pi^n\right)} \to\frac{H^2_c(G_{K,S},A_n)}{\im\left({\bigoplus}_{v\in S_f}H^1_{\cF}(K_v,A_n)\right)}\\
\xrightarrow{\sigma_n}\frac{H^1(G_{K,S},A_n^*(1))^*}{\im\left({\bigoplus}_{v\in S_f}H^1_{/\cF^*}(K_v,A_n^*(1))^*\right)}\xrightarrow{\iota_n^*} H^1_{\cF^*}(K,A_n^*(1))^*\end{multline*}
is equal to $-\nu$. Here we have written $\iota_n^*$ for the map that is induced by the dual of the inclusion $H^1_{\cF^*}(K,A_n^*(1))\hookrightarrow H^1(G_{K,S},A_n^*(1))$.

{\color{black}Since $\kappa_2$ is by definition induced by the bottom arrow in the commutative diagram (\ref{gdsquare}), }
one finally finds that 
\begin{equation}\label{H2}
\kappa_2 \circ q_2^{-1}\circ H^2(\varphi)=-\nu,
\end{equation}
where we regard $\nu$ as the isomorphism (\ref{abuse nu}). This proves the commutativity of the second diagram in Theorem \ref{qithm}.

\subsubsection{Completion of the proof}\label{step5}

We finally prove that $\varphi$ is a quasi-isomorphism. We recall that $H^1(C_{\rm PT})=\ker(\lambda)=H^1_{\cF}(K,A_n)$. Since also $C_{\rm Nek}^0=0$, the map $H^1(\varphi)$ is simply the restriction of $\varphi^1$ to $\ker(\lambda)$, and is injective because $\varphi^1$ is (as was proved above: {\color{black}see the discussion after (\ref{itisacocycle})}). Now, since $\# H^1(C_{\rm PT})=\# H^1(C_{\rm Nek}/\pi^n)$
by Lemma \ref{lem NekPT}(iii) (and since these orders are finite), $H^1(\varphi)$ is necessarily bijective.

Since $C_{\rm Nek}/\pi^n$ is acyclic outside degrees one and two, it only remains to show that $H^2(\varphi)$ is bijective, and this is now an immediate consequence of (\ref{H2}). \qed

\section{Stark systems}

In this section, we give applications of Theorem \ref{qithm} to the theory of Stark systems. 
Let $p,K,S, \Phi,\cO,\pi, \cR, T$ be as in \S \ref{sec nek selmer}.

\subsection{The module of Stark systems}\label{sec stark}

Fix a positive integer $n$. We set
$$\cR_n:=\cR/\pi^n\cR \text{ and }A_n:=T/\pi^n T.$$
Fix a Selmer structure $\cF$ on $A_n$. (For the moment, we do not need to assume $\cF$ is of the special form as in Definition \ref{def sel str}.)
Let $\cP=\cP_n$ be a set of primes $\fq$ of $K$ such that $\fq \notin S_\infty\cup S_p\cup S_{\rm ram}(T)$ (where $S_{\rm ram}(T)$ denotes the set of places of $K$ at which $T$ ramifies) and 
$$H^1_{/\cF}(K_\fq, A_n)\simeq \cR_n.$$
Let $\cN=\cN(\cP)$ be the set of square-free products of primes in $\cP$. For $\fn\in \cN$, let $\nu(\fn)$ be the number of prime divisors of $\fn$.

For a commutative ring $R$ and an $R$-module $M$, let
$${\bigcap}_R^r M:= \left( {\bigwedge}_R^r (M^\ast)\right)^\ast$$
be the $r$-th exterior bidual as in \cite{sbA}. 

\begin{lemma}\label{lem transition}
Let 
$$0\to X\to Y \to Z$$
be an exact sequence of finitely generated $\cR_n$-modules. Suppose that $Z$ is free of rank $s$. Then for any $r \geq 0$ the exact sequence induces a canonical homomorphism
$${\bigcap}_{\cR_n}^{r+s} Y\to \left({\bigcap}_{\cR_n}^r X \right) \otimes_{\cR_n} {\det}_{\cR_n}(Z).$$
\end{lemma}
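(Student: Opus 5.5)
We construct the map by dualizing, using that $\cR_n$ is a Gorenstein ring of Krull dimension zero (so it is self-injective and $\Hom_{\cR_n}(-,\cR_n)$ is exact). Write $\iota:X\to Y$ and $f:Y\to Z$. Dualizing $0\to X\to Y\to Z$ gives an exact sequence
$$Z^\ast\xrightarrow{f^\ast} Y^\ast\xrightarrow{\iota^\ast} X^\ast\to 0,$$
whose surjectivity on the right comes from the injectivity of $\cR_n$ as a module over itself. Also, $Z^\ast$ is free of rank $s$ with ${\bigwedge}^s_{\cR_n} Z^\ast={\det}_{\cR_n}(Z^\ast)\simeq {\det}_{\cR_n}(Z)^\ast$, canonically.

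The core of the argument is the following standard map. For each $r\geq 0$ we define
$$\Psi: {\bigwedge}^r_{\cR_n} Y^\ast\otimes_{\cR_n} {\bigwedge}^s_{\cR_n} Z^\ast\to {\bigwedge}^{r+s}_{\cR_n} Y^\ast,\qquad \Psi(\omega\otimes \eta)=\omega\wedge f^\ast(\eta).$$
We then check that $\Psi$ factors through ${\bigwedge}^r X^\ast\otimes {\bigwedge}^s Z^\ast$. The kernel of ${\bigwedge}^r Y^\ast\to{\bigwedge}^r X^\ast$ is generated by wedges having at least one factor in $\ker(\iota^\ast)=\im(f^\ast)$. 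Such an element becomes zero after wedging with $f^\ast(\eta)$, because $f^\ast(\eta)$ lies in ${\bigwedge}^s\im(f^\ast)$, and $\im(f^\ast)$ is generated by $s$ elements, so ${\bigwedge}^{s+1}\im(f^\ast)=0$. This gives a canonical map
$${\bigwedge}^r X^\ast\otimes {\det}(Z)^\ast\to {\bigwedge}^{r+s}Y^\ast.$$

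Next we apply $(-)^\ast$. The dual of ${\bigwedge}^{r+s}Y^\ast$ is ${\bigcap}^{r+s}Y$ by definition. For the other side, since ${\det}(Z)^\ast$ is free of rank one, we have
$$\left({\bigwedge}^r X^\ast\otimes{\det}(Z)^\ast\right)^\ast\simeq \left({\bigwedge}^rX^\ast\right)^\ast\otimes {\det}(Z)={\bigcap}^r X\otimes{\det}(Z).$$
Composing these identifications with the dual of the map above yields the required homomorphism
$${\bigcap}^{r+s}Y\to \left({\bigcap}^rX\right)\otimes{\det}(Z).$$

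Canonicity holds because no choices were made anywhere. Concretely, if $z_1,\dots,z_s$ is a basis of $Z$ with dual basis $z_i^\ast$, the map sends $\Phi\in{\bigcap}^{r+s}Y$ to $\left(\omega\mapsto \Phi(\omega\wedge f^\ast z_1^\ast\wedge\cdots\wedge f^\ast z_s^\ast)\right)\otimes z_1\wedge\cdots\wedge z_s$. A change of basis multiplies both factors by inverse determinants, which confirms independence of the basis.

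The main obstacle is the factorization step, namely verifying that $\ker({\bigwedge}^rY^\ast\to{\bigwedge}^rX^\ast)$ is generated by elements with a factor in $\ker\iota^\ast$. This is the general fact that exterior powers preserve surjections, with kernel $\ker(\iota^\ast)\wedge{\bigwedge}^{r-1}Y^\ast$. The other point to get right is using self-injectivity of $\cR_n$ to obtain exactness at $X^\ast$.
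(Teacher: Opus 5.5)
Your proof is correct. The homomorphism you construct is the same contraction map the paper imports from \cite[Proposition A.3]{sbA}, namely $\Phi\mapsto\bigl(\omega\mapsto\Phi(\omega\wedge f^\ast z_1^\ast\wedge\cdots\wedge f^\ast z_s^\ast)\bigr)\otimes z_1\wedge\cdots\wedge z_s$. It agrees up to the ordering convention for the wedge product, which can introduce a sign $(-1)^{rs}$.

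The route is different, though. The paper first fixes a basis $z_1,\dots,z_s$ of $Z$ and identifies $Z$ with $\cR_n^s$. It then quotes \cite{sbA} for the existence of a map $\bigcap^{r+s}_{\cR_n}Y\to\bigcap^r_{\cR_n}X$; the non-formal content is that the contraction lands in $\bigcap^r_{\cR_n}X$ and not merely in $\bigcap^r_{\cR_n}Y$. Finally it tensors with $z_1\wedge\cdots\wedge z_s$ and asserts, without detail, that the result is independent of the basis.

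You instead write down the basis-free map $\bigwedge^r_{\cR_n}X^\ast\otimes{\det}_{\cR_n}(Z)^\ast\to\bigwedge^{r+s}_{\cR_n}Y^\ast$ and dualize. This way both the landing in $\bigcap^r_{\cR_n}X$ and the canonicity are automatic. The price is that you must use explicitly that $\cR_n$ is self-injective. That holds because $\cR_n$ is Artinian Gorenstein, as $\pi^n$ is a non-zero-divisor on the Gorenstein order $\cR$. Self-injectivity gives exactness of $Z^\ast\to Y^\ast\to X^\ast\to 0$ both at $X^\ast$ and at $Y^\ast$; in the paper this input is hidden inside the citation.

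So your argument is self-contained and shows exactly where the Gorenstein hypothesis enters. The paper's is shorter and keeps the normalization of \cite{sbA} and \cite{bss1}, which it then uses for the transition maps $v_{\fn,\fm}$ and for \cite[Theorem 4.6]{bss1}.

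One small point of phrasing. The vanishing of $k\wedge\bigwedge^s f^\ast(\eta)$ for $k=f^\ast(\zeta)$ is cleanest written as $\bigwedge^{s+1}f^\ast(\zeta\wedge\eta)=0$, since $\zeta\wedge\eta\in\bigwedge^{s+1}_{\cR_n}Z^\ast=0$. The natural map $\bigwedge^{s}\im(f^\ast)\to\bigwedge^{s}Y^\ast$ need not be injective, so "lies in $\bigwedge^s\im(f^\ast)$" should be read as "is the image of an element of $\bigwedge^s\im(f^\ast)$".
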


\begin{proof}
Fix a basis $\{z_1,\ldots,z_s\}$ of $Z$ and regard $Z=\cR_n^s$. By \cite[Proposition A.3]{sbA}, we know that the exact sequence
$$0\to X\to Y\to \cR_n^s$$
induces a map
$$\Phi: {\bigcap}_{\cR_n}^{r+s} Y\to {\bigcap}_{\cR_n}^r X.$$
One sees that the map
$${\bigcap}_{\cR_n}^{r+s} Y\to \left({\bigcap}_{\cR_n}^r X \right) \otimes_{\cR_n} {\det}_{\cR_n}(Z); \ y \mapsto \Phi(y)\otimes (z_1\wedge\cdots \wedge z_s)$$
is independent of the choice of the basis $\{z_1,\ldots,z_s\}$. 
\end{proof}

For $\fm,\fn\in \cN$ with $\fm \mid \fn$, by applying Lemma \ref{lem transition} to the exact sequence
$$0\to H^1_{\cF^\fm}(K,A_n)\to H^1_{\cF^\fn}(K,A_n)\to \bigoplus_{\fq \mid \fn/\fm}H^1_{/\cF}(K_\fq,A_n),$$
we obtain a canonical map
$$ {\bigcap}_{\cR_n}^{r+\nu(\fn)} H^1_{\cF^\fn}(K,A_n)\to {\bigcap}_{\cR_n}^{r+\nu(\fm)} H^1_{\cF^\fm}(K,A_n)\otimes_{\cR_n} \bigotimes_{\fq \mid \fn/\fm}H^1_{/\cF}(K_\fq,A_n) $$
for any $r\geq 0$. This map induces
$$v_{\fn,\fm}: {\bigcap}_{\cR_n}^{r+\nu(\fn)} H^1_{\cF^\fn}(K,A_n)\otimes_{\cR_n} \bigotimes_{\fq \mid \fn}H^1_{/\cF}(K_\fq,A_n)^\ast\to {\bigcap}_{\cR_n}^{r+\nu(\fm)} H^1_{\cF^\fm}(K,A_n)\otimes_{\cR_n} \bigotimes_{\fq \mid \fm}H^1_{/\cF}(K_\fq,A_n)^\ast. $$

\begin{definition}[{See \cite[\S 4.1]{bss1}}]\label{def stark}
Let $r\geq 0$. The module of Stark systems of rank $r$ for $(A_n,\cF,\cP) $ is defined by
$${\rm SS}_r(A_n,\cF,\cP) := \varprojlim_{\fn \in \cN}\left( {\bigcap}_{\cR_n}^{r+\nu(\fn)} H^1_{\cF^\fn}(K,A_n) \otimes_{\cR_n} \bigotimes_{\fq \mid \fn} H^1_{/\cF}(K_\fq,A_n)^\ast\right),$$
where the inverse limit is taken with respect to $v_{\fn,\fm}$. 
\end{definition}

Let $C_{\rm PT}$ be the Poitou-Tate complex for $(A_n,\cF)$ in Definition \ref{def PT} (see also Remark \ref{rem abb}). Recall from Proposition \ref{prop core}(ii) that $C_{\rm PT}$ is a perfect complex of $\cR_n$-modules if we assume the existence of a core vertex for $\cF$ (see Definition \ref{def core}).

\begin{proposition}\label{prop stark}
Assume $\cN$ contains a core vertex for $\cF$. Then there is a canonical isomorphism
$${\det}_{\cR_n}^{-1}(C_{\rm PT}) \simeq {\rm SS}_{\chi(\cF)}(A_n,\cF,\cP). $$
In particular, the $\cR_n$-module ${\rm SS}_{\chi(\cF)}(A_n,\cF,\cP)$ is free of rank one. 
\end{proposition}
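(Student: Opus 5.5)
Fix a core vertex $\fn\in\cN$. We compute $\det^{-1}_{\cR_n}(C_{\rm PT})$ from the explicit representative $C_{{\rm PT},\fn}$ of Proposition \ref{prop core}(ii). Set $F:=H^1_{\cF^\fn}(K,A_n)$, which is free of rank $\chi(\cF)+\nu(\fn)$, and $Z:=\bigoplus_{\fq\mid\fn}H^1_{/\cF}(K_\fq,A_n)$, which is free of rank $\nu(\fn)$ since each summand is $\simeq\cR_n$. Then $C_{{\rm PT},\fn}$ is a complex of free modules in degrees $1,2$, so
$$\det^{-1}_{\cR_n}(C_{\rm PT})\simeq \det_{\cR_n}(F)\otimes_{\cR_n}\det^{-1}_{\cR_n}(Z)={\bigwedge}^{\chi(\cF)+\nu(\fn)}_{\cR_n}F\otimes_{\cR_n}\bigotimes_{\fq\mid\fn}H^1_{/\cF}(K_\fq,A_n)^\ast .$$
Because $F$ is free, ${\bigcap}^{r}F={\bigwedge}^rF$. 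Hence the right-hand side is exactly the $\fn$-th term of the inverse system defining ${\rm SS}_{\chi(\cF)}(A_n,\cF,\cP)$, and it is free of rank one. Up to the usual sign convention, this identification should not depend on how $\det^{-1}$ of a two-term complex is normalized.

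The main step is to show that the projection ${\rm SS}_{\chi(\cF)}\to(\fn\text{-th term})$ is an isomorphism for every core vertex $\fn$. The plan is:
\begin{itemize}
\item[(a)] For core vertices $\fn\mid\fn'$, show that the transition map $v_{\fn',\fn}$ is an isomorphism.
\item[(b)] Show that the set of core vertices is cofinal in $\cN$.
\item[(c)] Deduce that the inverse limit is computed on the cofinal subsystem of core vertices. All transitions there are isomorphisms, so the limit is identified with any single term.
\end{itemize}

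For (a), the sequence $0\to F\to F'\to\bigoplus_{\fq\mid\fn'/\fn}H^1_{/\cF}(K_\fq,A_n)\to0$ is exact. Surjectivity on the right follows from the Poitou--Tate sequence, because $H^1_{(\cF^{\fn})^\ast}(K,A_n^\ast(1))=0$. This is a short exact sequence of free modules, so the map of Lemma \ref{lem transition} is the standard isomorphism ${\bigwedge}^{r+s}F'\simeq{\bigwedge}^rF\otimes\det(Z'')$. Equivalently, it is the determinant of the quasi-isomorphism $C_{{\rm PT},\fn}\to C_{{\rm PT},\fn'}$. This compatibility is also what makes the isomorphism with $\det^{-1}(C_{\rm PT})$ independent of the choice of $\fn$, and hence canonical.

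For (b), the key point is that if $\fn$ is a core vertex and $\fn\mid\fm\in\cN$, then $\fm$ is again a core vertex. Relaxing further keeps the dual Selmer group, which is contained in $H^1_{(\cF^\fn)^\ast}$, equal to zero. The relaxed Selmer group $H^1_{\cF^\fm}(K,A_n)$ is an extension of the free module $\bigoplus_{\fq\mid\fm/\fn}H^1_{/\cF}(K_\fq,A_n)$ by the free module $F$, so it is free. Hence $\{\fm:\fn\mid\fm\}$ is cofinal, since any $\fm$ divides $\fn\fm$ (square-free up to removing repeats, i.e.\ take ${\rm lcm}(\fn,\fm)$).

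For (c), if $\fm$ is not a core vertex, its component of a Stark system is determined by the component at ${\rm lcm}(\fn,\fm)$ via the transition maps. So restriction to the cofinal subsystem is an isomorphism of inverse limits; this is a standard fact for limits over directed posets.

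The main obstacle I expect is the sign and compatibility bookkeeping. One must check that the isomorphism ${\bigcap}^{r+s}F'\to{\bigcap}^rF\otimes\det Z''$ of \cite[Proposition A.3]{sbA} agrees, up to a sign depending only on the ranks, with the determinant functor's isomorphism for the triangle $C_{{\rm PT},\fn}\to C_{{\rm PT},\fn'}\to[Z''\to Z'']$. Such a sign can be absorbed uniformly, so the identification with $\det^{-1}(C_{\rm PT})$ remains canonical. Freeness of rank one then follows immediately.
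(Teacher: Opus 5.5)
Your proposal is correct and follows essentially the same route as the paper. The paper likewise shows that $\fn\fm$ is a core vertex whenever $\fn$ is (your step (b)), identifies ${\det}^{-1}_{\cR_n}(C_{\rm PT})$ with the $\fn$-th term via $C_{{\rm PT},\fn}$, and defines the map componentwise by $v_{\fn,\fm}$, leaving the compatibility and sign check as ``easily checked''. The only difference is that the paper gets bijectivity of the projection ${\rm SS}_{\chi(\cF)}(A_n,\cF,\cP)\to(\fn\text{-th term})$ by citing \cite[Theorem 4.6(i)]{bss1}, whereas you prove it directly from steps (a) and (c), which is a valid self-contained argument.
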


\begin{proof}
Let $\fn \in \cN$ be a core vertex. Take any $\fm \in \cN$ that is coprime to $\fn$. We first claim that $\fd:=\fn \fm$ is also a core vertex. To prove this, we check that $H^1_{\cF^\fd}(K,A_n)$ is a free $\cR_n$-module and $H^1_{(\cF^\fd)^\ast}(K,A_n^\ast(1))=0$. The latter condition holds since $H^1_{(\cF^\fn)^\ast}(K,A_n^\ast(1))=0$ and $\fn \mid \fd$. To show that $H^1_{\cF^\fd}(K,A_n)$ is $\cR_n$-free, consider the exact sequence
$$0\to H^1_{\cF^\fn}(K,A_n)\to H^1_{\cF^\fd}(K,A_n) \to \bigoplus_{\fq \mid \fm}H^1_{/\cF}(K_\fq,A_n).$$
The last map is surjective since $H^1_{(\cF^\fn)^\ast}(K,A_n^\ast(1))=0$. Since $H^1_{/\cF}(K_\fq,A_n)\simeq \cR_n$ for each $\fq \in \cP$ and $H^1_{\cF^\fn}(K,A_n)$ is $\cR_n$-free, the claim follows. 

We define a homomorphism
$$\psi: {\det}_{\cR_n}^{-1}(C_{\rm PT}) \to {\rm SS}_{\chi(\cF)}(A_n,\cF,\cP)$$
as follows. Take any $\fz \in {\det}_{\cR_n}^{-1}(C_{\rm PT}) $ and $\fm \in \cN$. It is sufficient to define $\psi(\fz)_\fm$. The above claim implies that there is a core vertex $\fn \in \cN$ such that $\fm \mid \fn$. By (\ref{core formula}) we have $\chi(\cF)+\nu(\fn)={\rm rank}_{\cR_n}(H^1_{\cF^\fn}(K,A_n))$ so, by Proposition \ref{prop core}(ii), we have a canonical identification
$${\det}_{\cR_n}^{-1}(C_{{\rm PT}})= {\bigcap}_{\cR_n}^{\chi(\cF)+\nu(\fn)}H^1_{\cF^\fn}(K,A_n) \otimes_{\cR_n}\bigotimes_{\fq\mid \fn}H^1_{/\cF}(K_\fq,A_n)^\ast.$$
We then define 
$$\psi(\fz)_{\fm}:= v_{\fn,\fm}(\fz).$$
One checks easily that the map $\psi$ is well-defined. Since the map
$${\rm SS}_{\chi(\cF)}(A_n,\cF,\cP) \to  {\bigcap}_{\cR_n}^{\chi(\cF)+\nu(\fn)}H^1_{\cF^\fn}(K,A_n) \otimes_{\cR_n}\bigotimes_{\fq\mid \fn}H^1_{/\cF}(K_\fq,A_n)^\ast; \ \epsilon \mapsto \epsilon_\fn$$
is bijective by \cite[Theorem 4.6(i)]{bss1}, we see that $\psi$ is an isomorphism. 
\end{proof}

Combining Proposition \ref{prop stark} with Theorem \ref{qithm}, we obtain the following. 

\begin{theorem}\label{thm stark}
We fix the data $\sF=(\sF_v)_{v\in S_p}$ as in (\ref{greenberg local}) and let $\widetilde \rgamma_\sF(K,T)$ be the associated Selmer complex (see Definition \ref{def sel}). Let $\cF$ be the Selmer structure attached to $\sF$ (see Definition \ref{def sel str}).
Assume Hypothesis \ref{hyp tech} and that $\cN$ contains a core vertex for $\cF$ on $A_n$ with $\chi(\cF)=\chi(\sF)$ (so Hypothesis \ref{hyp core} is also satisfied). Then there is a canonical isomorphism
$$\varpi_n: {\det}_{\cR}^{-1}(\widetilde \rgamma_\sF(K,T))/\pi^n = {\det}_{\cR_n}^{-1}(\cO/\pi^n\cO \lotimes_\cO \widetilde \rgamma_\sF(K,T)) \xrightarrow{\sim} {\rm SS}_{\chi(\sF)}(A_n,\cF,\cP). $$
\end{theorem}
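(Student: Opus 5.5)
The isomorphism $\varpi_n$ will be the composite of three canonical isomorphisms. First, the base-change compatibility of determinants for perfect complexes, then the quasi-isomorphism $\varphi$ of Theorem \ref{qithm}, and finally Proposition \ref{prop stark}.

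\emph{Step 1: base change.} By Proposition \ref{prop perf}(i), $C_{\rm Nek}=\widetilde \rgamma_\sF(K,T)$ is a perfect complex of $\cR$-modules. Hence it is represented by a bounded complex $P^\bullet$ of finitely generated projective (hence free, since $\cR$ is local) $\cR$-modules. Then $C_{\rm Nek}\lotimes_\cR\cR_n$ is represented by $P^\bullet\otimes_\cR\cR_n$. The standard compatibility of the determinant functor with scalar extension gives a canonical isomorphism
$${\det}_\cR^{-1}(C_{\rm Nek})\otimes_\cR\cR_n\simeq{\det}_{\cR_n}^{-1}(C_{\rm Nek}\lotimes_\cR\cR_n).$$
Since $\cR$ is an $\cO$-order, one has $\cR\otimes_\cO\cO/\pi^n\cO=\cR_n$. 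This gives $C_{\rm Nek}\lotimes_\cR\cR_n=\cO/\pi^n\cO\lotimes_\cO C_{\rm Nek}$, which justifies the equality in the statement.

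\emph{Step 2: transport along $\varphi$.} The hypotheses give Hypothesis \ref{hyp tech}. By assumption, $\cN$ contains a core vertex $\fn$ with $\chi(\cF)=\chi(\sF)$, so Hypothesis \ref{hyp core} holds (the elements of $\cP$ lie outside $S_\infty\cup S_p\cup S_{\rm ram}(T)$, as Definition \ref{def core} requires). Theorem \ref{qithm} therefore provides a canonical quasi-isomorphism $\varphi:C_{\rm PT}\to C_{\rm Nek}/\pi^n$. Both sides are perfect complexes of $\cR_n$-modules: the left by Proposition \ref{prop core}(ii), the right by Step 1. A quasi-isomorphism of perfect complexes induces a canonical isomorphism
$${\det}_{\cR_n}^{-1}(C_{\rm PT})\simeq{\det}_{\cR_n}^{-1}(C_{\rm Nek}/\pi^n).$$
We use its inverse.

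\emph{Step 3: Stark systems.} Proposition \ref{prop stark} applies because $\cN$ contains a core vertex. It identifies ${\det}_{\cR_n}^{-1}(C_{\rm PT})$ canonically with ${\rm SS}_{\chi(\cF)}(A_n,\cF,\cP)$, and $\chi(\cF)=\chi(\sF)$. Composing the three isomorphisms gives $\varpi_n$.

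The main obstacle is canonicity. We must check that the composite does not depend on the auxiliary choices: the enlargement of $S$ used to contain the prime divisors of $\fn$ (Proposition \ref{prop change S}), the core vertex $\fn$ used both in Proposition \ref{prop core}(ii) and in the construction of $\psi$, and the representative of $C_{\rm Nek}$ in \S\ref{step2}. The plan is to argue as follows.
\begin{itemize}
\item Enlarging $S$ or $\fn$ gives quasi-isomorphisms $C_{{\rm PT},S}\to C_{{\rm PT},S'}$ and $C_{{\rm PT},\fn}\to C_{{\rm PT},\fd}$ for $\fn\mid\fd$. These are compatible with $\varphi$, since $\varphi$ is defined by lifting cocycles, which commutes with inflation and with adding zero components.
\item The identification of determinants for $\fn\mid\fd$ corresponds to the transition maps $v_{\fd,\fn}$. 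This is exactly what makes $\psi$ well-defined in Proposition \ref{prop stark}.
\end{itemize}
Once these compatibilities are granted, the composite is canonical.
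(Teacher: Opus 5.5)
Your proposal is correct and matches the paper's argument: the paper obtains $\varpi_n$ as the composite of three isomorphisms, namely the base-change isomorphism for determinants of the perfect complex $\widetilde \rgamma_\sF(K,T)$, the isomorphism of determinants induced by the quasi-isomorphism $\varphi$ of Theorem \ref{qithm}, and the isomorphism of Proposition \ref{prop stark}. Your remarks on independence of $S$, of the core vertex and of the representative of $C_{\rm Nek}$ spell out well-definedness checks that the paper treats as routine.
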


Applying the general theory of Stark systems in \cite[Theorem 4.6]{bss1}, we have the following. 

\begin{corollary}\label{cor stark}
Assume the same hypotheses as in Theorem \ref{thm stark}. Suppose that an $\cR$-basis
$$\fz \in {\det}_\cR^{-1}(\widetilde \rgamma_\sF(K,T))$$
is given. 
We set
$$\epsilon:= \varpi_n(\fz) \in {\rm SS}_{\chi(\sF)}(A_n,\cF,\cP)$$ 
where $\varpi_n$ is the isomorphism in Theorem \ref{thm stark}. For each non-negative integer $i$, we follow \cite[Definition 4.1]{bss1} to define an ideal
$$I_i(\epsilon):=\sum_{\fn \in \cN, \ \nu(\fn)=i} \im(\epsilon_\fn).$$
of $\cR_n$. Then we have
$${\rm Fitt}_{\cR_n}^i\left(H^1_{\cF^\ast}(K,A_n^\ast(1))^\ast \right) = I_i(\epsilon) .$$
\end{corollary}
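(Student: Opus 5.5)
The plan is to reduce Corollary \ref{cor stark} to the general structure theorem \cite[Theorem 4.6]{bss1} for Stark systems. The only new input needed is that $\epsilon$ is an $\cR_n$-basis of ${\rm SS}_{\chi(\sF)}(A_n,\cF,\cP)$.

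\emph{Step 1: $\epsilon$ is a basis.} Since $\fz$ is an $\cR$-basis of the invertible $\cR$-module ${\det}_\cR^{-1}(\widetilde \rgamma_\sF(K,T))$, its image modulo $\pi^n$ is an $\cR_n$-basis of ${\det}_{\cR}^{-1}(\widetilde \rgamma_\sF(K,T))/\pi^n$. Here I use that the determinant functor commutes with the derived base change $\cO/\pi^n\cO\lotimes_\cO -$ on perfect complexes, as recorded in the statement of Theorem \ref{thm stark}. Applying the isomorphism $\varpi_n$ of Theorem \ref{thm stark}, it follows that $\epsilon=\varpi_n(\fz)$ is an $\cR_n$-basis of ${\rm SS}_{\chi(\sF)}(A_n,\cF,\cP)$. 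By Proposition \ref{prop stark} this module is free of rank one, which is consistent.

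\emph{Step 2: apply \cite[Theorem 4.6]{bss1}.} That theorem says the following: if $\cN$ contains a core vertex for $\cF$, $\cR_n$ is a self-injective (here, Gorenstein zero-dimensional) local ring, and the core rank is $r=\chi(\cF)$, then for any basis $\epsilon$ of ${\rm SS}_r(A_n,\cF,\cP)$ one has
$${\rm Fitt}^i_{\cR_n}\bigl(H^1_{\cF^\ast}(K,A_n^\ast(1))^\ast\bigr)=I_i(\epsilon)\quad\text{for all } i\geq 0.$$
I would check that its hypotheses hold in our setting:
\begin{itemize}
\item[(a)] $\cR_n=\cR/\pi^n$ is a zero-dimensional Gorenstein local ring, hence self-injective, since $\cR$ is a local Gorenstein $\cO$-order and $\pi$ is a non-zero-divisor.
\item[(b)] A core vertex lies in $\cN$ by assumption.
\item[(c)] Each $\fq\in\cP$ satisfies $H^1_{/\cF}(K_\fq,A_n)\simeq \cR_n$.
\end{itemize}
The rank in Theorem \ref{thm stark} is $\chi(\sF)$, which equals $\chi(\cF)$ by hypothesis, so the ranks match.

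\emph{Main obstacle.} The main risk is a definitional mismatch with \cite{bss1}. Our core vertices are allowed to have $H^1_{/\cF}(K_v,A_n)$ free of arbitrary rank, and \cite{bss1} may require $\cF$ to be cartesian or satisfy similar conditions. I would address this by noting that the core vertex used can be chosen inside $\cN$, where the local quotients are free of rank one, so the bss1 notion applies. I would also use the fact that the Fitting ideal computation in \cite{bss1} only uses the Poitou-Tate sequence of Proposition \ref{prop core}(i) at core vertices $\fd=\fn\fm$ and the presentation it gives. Concretely, $H^1_{\cF^\ast}(K,A_n^\ast(1))^\ast$ is the cokernel of $\lambda_\fd$, a map between free modules, so its Fitting ideals are generated by minors. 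These minors are exactly the images $\im(\epsilon_{\fm'})$ for $\fm'\mid\fd$ with $\nu(\fm')=i$, computed via the maps $v_{\fd,\fm'}$ of Lemma \ref{lem transition}. Should a direct citation fail, I would run this minor computation myself as a fallback.
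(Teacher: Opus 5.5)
Your proposal is correct and follows the paper's route: the paper deduces the corollary in one line from \cite[Theorem 4.6]{bss1}. The only input needed is that $\epsilon=\varpi_n(\fz)$ is an $\cR_n$-basis of ${\rm SS}_{\chi(\sF)}(A_n,\cF,\cP)$, which holds because $\fz$ is an $\cR$-basis and $\varpi_n$ is an isomorphism, with the core vertex in $\cN$ supplied by the hypotheses of Theorem \ref{thm stark}.
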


\subsection{Refined Iwasawa theory for Heegner points}\label{sec heeg}
In this subsection, we apply Corollary \ref{cor stark} to the setting of Heegner points and give an analogue of ``refined Iwasawa theory" of Kurihara \cite{kurihara}.

In this subsection, we assume $p\geq 5$. 
Let $E$ be an elliptic curve defined over $\QQ$ which has good ordinary reduction at $p$. Let $N$ be the conductor of $E$. Let $K$ be an imaginary quadratic field satisfying the Heegner hypothesis: every prime divisor of $N$ splits in $K$. We assume $p$ is unramified in $K$. Let $K_\infty/K$ be the anticyclotomic $\ZZ_p$-extension. Let $K_n$ be the $n$-th layer of $K_\infty/K$. We set $\Gamma:=\Gal(K_\infty/K)$ and $\Lambda:=\ZZ_p[[\Gamma]]$. We set $T:=T_p(E)$ and consider the deformation $\TT:=\varprojlim_n {\rm Ind}_{K_n/K}(T)=T\otimes_{\ZZ_p}\Lambda$. Since $E$ has good ordinary reduction at $p$, we have a canonical exact sequence
$$\sF_v: 0\to \TT_v^+ \to \TT \to \TT_v^-\to 0$$
for each prime $v$ of $K$ lying above $p$ (see Example \ref{ex greenberg}). Let $\widetilde \rgamma_\sF(K,\TT)$ be the associated Selmer complex as in Definition \ref{def sel}.

We assume the following. 

\begin{hypothesis}\label{hyp heeg}\
\begin{itemize}
\item[(i)] The representation $\rho: G_K\to {\rm Aut}(E[p])$ is surjective.
\item[(ii)] $E(K_v)[p]=0$ for every $v\mid N$. 
\item[(iii)] $N$ is square-free.
\item[(iv)] $p$ is non-anomalous, i.e., $H^0(K_v,\widetilde E_v[p])=0$ for any $v\mid p$. 
\end{itemize}
\end{hypothesis}

Note that Hypothesis \ref{hyp heeg} implies Hypothesis \ref{hyp tech} for ${\rm Ind}_{K_m/K}(T)$ for any $m\geq 0$.

We now fix $n\geq 1$ and $m\geq 0$. We set $\cR_{n,m}:=\ZZ/p^n\ZZ [\Gamma_m] $, 
where $\Gamma_m:=\Gal(K_m/K)$. 
We set $A_n:=T/p^nT\simeq E[p^n]$ and $A_{n,m}:={\rm Ind}_{K_m/K}(A_n)$. Let $\cP_{n,m}$ be the set of primes $\fq$ of $K$ such that $\fq \nmid pN$ and 
$$H^1_{/{\rm ur}}(K_\fq, A_{n,m}) \simeq \cR_{n,m}.$$
Let $\cN_{n,m}$ be the set of square-free products of primes in $\cP_{n,m}$. 

Let $\cF$ be the Selmer structure on $A_{n,m}$ attached to $\sF$ (see Definition \ref{def sel str}). 
Let ${\rm Sel}_{p^n}(E/K_m)$ be the $p^n$-Selmer group for $E/K_m$.

\begin{proposition}\label{prop canonical selmer}
Assume Hypothesis \ref{hyp heeg}(iv). 
Then we have canonical identifications
$${\rm Sel}_{p^n}(E/K_m)= H^1_\cF(K,A_{n,m})=H^1_{\cF^\ast}(K,A_{n,m}^\ast(1)).$$
\end{proposition}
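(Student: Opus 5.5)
Both identifications come from Shapiro's lemma together with a place-by-place comparison of local conditions; the second one additionally uses the Weil pairing. For the first equality, Shapiro's lemma gives a canonical isomorphism $H^1(G_{K,S},A_{n,m})\simeq H^1(G_{K_m,S},E[p^n])$, and for each finite $v$ of $K$ a canonical isomorphism $H^1(K_v,A_{n,m})\simeq\bigoplus_{w\mid v}H^1(K_{m,w},E[p^n])$. These are compatible with localization, so it suffices to check that under the local isomorphism $H^1_\cF(K_v,A_{n,m})$ corresponds to $\bigoplus_{w\mid v}\im(E(K_{m,w})/p^n\to H^1(K_{m,w},E[p^n]))$, i.e. the Kummer image.

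For $v\nmid p$ I will use that $H^1_f$ in the sense of \cite[\S 1.3]{R} on $E[p^n]$ is the image of $H^1_f(K_{m,w},T)$. Since $\ell\neq p$, this equals the image of $H^1_f(K_{m,w},V)\cap H^1(K_{m,w},T)=H^1(K_{m,w},T)_{\rm tors}$, which is the image of the Kummer map of $E(K_{m,w})\otimes\ZZ_p$ (the latter being finite). Hence it reduces to the Kummer image modulo $p^n$. Everything is also compatible with induction, because $H^1_f$ is defined via $V$ and $T$ and Shapiro's lemma respects these. (The definition of $\cF$ in Definition \ref{def sel str} is stated directly on $X=A_{n,m}$, so I only need this identification on the level of $E[p^n]$ over each $K_{m,w}$.)

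For $v\mid p$, the local condition $H^1_\cF$ is $\im(H^1(K_{m,w},\widehat E[p^n])\to H^1(K_{m,w},E[p^n]))$, where $\widehat E[p^n]=A^+_{n,v}$ is the kernel of reduction. I will show this equals the Kummer image, which is the main step. The Kummer image is $H^1_f$, and it is exact annihilator of itself under local Tate duality. The Greenberg image is also self-annihilating, since $T^+$ is its own orthogonal complement under the Weil pairing. It therefore suffices to show one inclusion and compare sizes. For the inclusion, the formal group gives $\widehat E(\fm_w)\to H^1(K_{m,w},\widehat E[p^n])$, and its image maps onto the Kummer image of $E(K_{m,w})/p^n$. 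This is because $E(K_{m,w})/\widehat E(\fm_w)\simeq\widetilde E(k_w)$, which has no $p$-torsion by Hypothesis \ref{hyp heeg}(iv) (non-anomalous over $K_v$ implies the same over the $p$-extension $K_{m,w}$, by the usual $p$-group fixed point argument). So $E(K_{m,w})/p^n=\widehat E(\fm_w)/p^n$ and the Kummer image lies inside the Greenberg image. For sizes, $H^0(K_{m,w},\widetilde E[p])=0$ forces $H^1(K_{m,w},\widehat E[p^n])\to H^1(K_{m,w},E[p^n])$ to be injective. Both subgroups then have order $\#E(K_{m,w})[p^n]\cdot p^{n[K_{m,w}:\QQ_p]}$, by the local Euler characteristic formula and Mattuck's theorem respectively. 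Alternatively, I would avoid counting by using the two self-duality statements, which already give equality once one inclusion holds. Summing over $v$ gives $H^1_\cF(K,A_{n,m})={\rm Sel}_{p^n}(E/K_m)$.

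For the second equality, the Weil pairing gives $E[p^n]\simeq E[p^n]^\ast(1)$, hence $A_{n,m}^\ast(1)\simeq{\rm Ind}_{K_m/K}(E[p^n]^\ast(1))\simeq A_{n,m}$, using that induction and coinduction agree for the finite group $\Gamma_m$. Under this isomorphism, local Tate duality on $A_{n,m}$ becomes, via Shapiro, the sum of the local pairings on $E[p^n]$ over $K_{m,w}$. Since the Kummer conditions are self-orthogonal, $\cF^\ast$ corresponds to $\cF$, and hence $H^1_{\cF^\ast}(K,A_{n,m}^\ast(1))=H^1_\cF(K,A_{n,m})$. The point I expect to need the most care is the $p$-adic comparison above, in particular justifying the self-orthogonality of the Greenberg condition and its compatibility with Shapiro's lemma.
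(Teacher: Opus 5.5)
Your proposal is correct and follows essentially the same route as the paper: reduce, via Shapiro's lemma, to showing that $H^1_\cF(K_v,A_{n,m})$ is the sum of the Kummer images over $w\mid v$, and then use the Weil pairing together with the self-orthogonality of Kummer images to identify $\cF^\ast$ with $\cF$. The only difference is that you prove the local comparisons yourself: at $v\mid p$ via the formal-group inclusion plus either the Euler characteristic and Mattuck count or maximal isotropy, and at $v\nmid p$ via the finiteness of $H^1(K_{m,w},T)$. The paper instead cites Greenberg's Proposition 2.5 (using that $p$ is non-anomalous) and calls the case $v\nmid p$ well known.
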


\begin{proof}
Note that we have a canonical identification $A_{n,m} = A_{n,m}^\ast(1)$ via the Weil pairing. 
It is sufficient to check that 
$$\im \left(\bigoplus_{w\mid v}E(K_{m,w})/p^n \xrightarrow{\kappa} \bigoplus_{w\mid v}H^1(K_{m,w},E[p^n])\simeq H^1(K_v,A_{n,m}) \right) = H^1_\cF(K_v,A_{n,m})$$
for any $v\mid pN$, where $\kappa$ denotes the Kummer map. If $v\mid p$, this follows from \cite[Proposition 2.5]{greenberg}, since $p$ is non-anomalous. If $v\nmid p$, then we have $H^1_\cF(K_v,A_{n,m}) = H^1_f(K_v,A_{n,m})$ by definition, and the claim is well-known. 
\end{proof}

\begin{lemma}\label{lem hyp}
Assume Hypothesis \ref{hyp heeg}. Then there exists a core vertex for $\cF$ on $A_{n,m}$ (see Definition \ref{def core}) that belongs to $\cN_{n,m}$. Moreover, we have $\chi(\cF)=\chi(\sF)=0$. (In particular, Hypothesis \ref{hyp core} is satisfied.)
\end{lemma}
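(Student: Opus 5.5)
The plan is to compute $\chi(\sF)$ directly, and then to produce a core vertex by the standard Chebotarev argument of Mazur--Rubin adapted to the group ring $\cR_{n,m}$. In that argument the rank comes out automatically from self-duality.

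\emph{Step 1: $\chi(\sF)=0$.} Work with $T':={\rm Ind}_{K_m/K}(T)$ over $\cR=\ZZ_p[\Gamma_m]$. The field $K$ is imaginary quadratic, so it has one complex place $v_\infty$. There $H^0(K_{v_\infty},T'^\ast(1))=T'^\ast(1)$, which has $\cR$-rank $2$. Since $p$ is unramified in $K$, we have $\sum_{v\mid p}[K_v:\QQ_p]=2$, and each $T_v'^-={\rm Ind}(T_p(\widetilde E_v))$ has $\cR$-rank $1$. Plugging these into the formula of Proposition \ref{prop perf}(ii) gives $\chi(\sF)=2-2=0$.

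\emph{Step 2: self-duality.} By Proposition \ref{prop canonical selmer}, the Weil pairing identifies $A_{n,m}$ with $A_{n,m}^\ast(1)$ and $\cF$ with $\cF^\ast$. More precisely, the induced pairing is $\cR_{n,m}$-sesquilinear for the involution $\gamma\mapsto\gamma^{-1}$, and this twist does not affect freeness or ranks. Consequently, for any $\fn$ the structure $(\cF^\fn)^\ast$ is the strict structure $\cF_\fn$, which imposes the zero condition at $\fq\mid\fn$.

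\emph{Step 3: constructing the core vertex.} I would follow \cite[\S 3]{MRkoly} (or \cite[\S 3]{bss2}). Hypothesis \ref{hyp heeg}(i) gives the standard hypotheses. The residual representation is absolutely irreducible. Moreover, there is $\tau\in G_{K(\mu_{p^\infty})}$ with $T/(\tau-1)T$ free of rank one, and $H^1(K(E[p^\infty],\mu_{p^\infty})/K,E[p])=0$. Hypothesis \ref{hyp heeg}(ii),(iii) handles the primes dividing $N$.

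For $\fq$ Kolyvagin primes relative to $K_m$, meaning $\fq$ splits completely in $K_m$ and Frobenius is conjugate to $\tau$ in $K(E[p^n])$, the quotient $H^1_{/{\rm ur}}(K_\fq,A_{n,m})$ is $\cR_{n,m}$-free of rank one. So such $\fq$ lie in $\cP_{n,m}$. Since $\cR_{n,m}$ is local with residue field $\FF_p$, Nakayama reduces everything to the fibre $A_{n,m}\otimes_{\cR_{n,m}}\FF_p=E[p]$.

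Choose $\fn\in\cN_{n,m}$ by Chebotarev so that the localisation $H^1_{\cF^\ast}(K,E[p])\to\bigoplus_{\fq\mid\fn}H^1_f(K_\fq,E[p])$ is injective. Then $H^1_{\cF_\fn}(K,E[p])=0$, and hence $H^1_{(\cF^\fn)^\ast}(K,A_{n,m}^\ast(1))=0$. The last implication uses Nakayama together with the injectivity of $H^1(K,A_{n,m}[\fm])\to H^1(K,A_{n,m})$, which follows from $H^0(K,E[p])=0$.

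Freeness of $H^1_{\cF^\fn}(K,A_{n,m})$ I would obtain from a Gorenstein-duality argument, as in \cite[Lemma 3.x]{bss2} or \cite{sakamoto}. The Poitou--Tate sequence of Proposition \ref{prop core}(i) for the pair $(\cF_\fn,\cF^\fn)$ is self-dual, and $\cR_{n,m}$ is Gorenstein (hence self-injective), so the module is free exactly when its $\fm$-torsion has the right size. Its rank $r$ then equals $\dim_{\FF_p}H^1_{\cF^\fn}(K,E[p])$.

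\emph{Step 4: computing the rank.} Mazur--Rubin's parity formula gives $\dim H^1_{\cF^\fn}(E[p])-\dim H^1_{\cF_\fn}(E[p])=\chi+\nu(\fn)$. Here $\chi$ is the Greenberg--Wiles core rank, and it agrees with the Euler characteristic computed in Step 1, namely $0$. Since $H^1_{\cF_\fn}=0$, this gives $r=\nu(\fn)$, and therefore $\chi(\cF)=r-\nu(\fn)=0=\chi(\sF)$ by (\ref{core formula}). Equivalently, one can take the Euler characteristic of $C_{{\rm PT},\fn}$ and compare it with $C_{\rm Nek}/p^n$ via Proposition \ref{prop perf}.

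The main obstacle is Step 3 in the equivariant setting. Producing Kolyvagin primes with $H^1_{/{\rm ur}}$ free over $\cR_{n,m}$ and proving that $H^1_{\cF^\fn}$ is free, not merely of the correct length, is where the work lies. I would first try to reduce both to the residual statement over $\FF_p$ via Nakayama and the vanishing of $H^0$, citing the equivariant core-vertex existence results of \cite{bss2}.
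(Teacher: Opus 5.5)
There is a genuine gap in Step 3, and it sits exactly where the paper's proof does its work. Steps 1 and 2 are fine. The problem is the passage from $A_{n,m}$ to its residual representation. You claim that $H^1_{\cF_\fn}(K,E[p])=0$ implies $H^1_{(\cF^\fn)^\ast}(K,A_{n,m}^\ast(1))=0$. You also claim, in Step 4, that the rank of $H^1_{\cF^\fn}(K,A_{n,m})$ equals $\dim_{\FF_p}H^1_{\cF^\fn}(K,E[p])$. Neither follows from Nakayama and $H^0(K,E[p])=0$ alone. The global vanishing only shows that an $\fm$-torsion Selmer class comes from some class in $H^1(K,E[p])$, with $E[p]=A_{n,m}[\fm]$ embedded via $1\mapsto\sum_{\sigma\in\Gamma_m}\sigma$. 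That residual class satisfies only the \emph{pulled-back} local conditions, i.e.\ the preimages of $H^1_\cF(K_v,A_{n,m})$. These coincide with $H^1_\cF(K_v,E[p])$ only if the map $H^1_{/\cF}(K_v,E[p])\to H^1_{/\cF}(K_v,{\rm Ind}_{K_m/K}(E[p]))$ induced by the norm element is injective for every $v\mid pN$. One also needs the torsion-freeness of Lemma \ref{tflemma} to pass from $A_{1,m}$ to $A_{n,m}$. This local compatibility is the hypothesis of \cite[Theorem 3.14]{bss2} that the paper actually verifies. Your proposal does not identify it, even though you plan to cite \cite{bss2}.

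For $v\mid N$ the check is easy: Hypothesis \ref{hyp heeg}(ii) gives $H^1_{/\cF}(K_v,E[p])=0$, which is roughly what you gesture at. For $v\mid p$ it is the real content, and it uses the non-anomaly Hypothesis \ref{hyp heeg}(iv), which your argument never invokes for this purpose. The argument has three steps:
\begin{itemize}
\item Since $H^2(K_v,A^+_{1,v})=0$, and likewise for the induced module, $H^1_{/\cF}$ is identified with $H^1(K_v,\widetilde E_v[p])$ (resp.\ with $H^1$ of its induction).
\item By Shapiro's lemma and inflation--restriction, the kernel of the map in question is $H^1(K_{m,w}/K_v,H^0(K_{m,w},\widetilde E_v[p]))$.
\item This group vanishes because $H^0(K_v,\widetilde E_v[p])=0$ and $K_{m,w}/K_v$ is a $p$-extension, so $H^0(K_{m,w},\widetilde E_v[p])=0$.
\end{itemize}

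Once a core vertex is in hand, your Step 4 is also more roundabout than necessary. By your own Step 2, the sequence of Proposition \ref{prop core}(i) becomes $0\to{\rm Sel}_{p^n}(E/K_m)\to H^1_{\cF^\fn}(K,A_{n,m})\to\bigoplus_{\fq\mid\fn}H^1_{/\cF}(K_\fq,A_{n,m})\to{\rm Sel}_{p^n}(E/K_m)^\ast\to0$. All terms are finite and the outer two have the same order, so counting gives $\chi(\cF)=0$ directly, with no residual Wiles formula. Your alternative of comparing the Euler characteristics of $C_{{\rm PT},\fn}$ and $C_{\rm Nek}/p^n$ is not independent: the paper's comparison (Lemma \ref{lem NekPT}(iii)) takes Hypothesis \ref{hyp core} as input.
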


\begin{proof}
We first prove the existence of a core vertex. 
Since $p\geq 5$, Hypothesis \ref{hyp heeg}(i) implies that the image of the representation
$$ G_K \to {\rm Aut}(T)\simeq {\rm GL}_2(\ZZ_p)$$
contains ${\rm SL}_2(\ZZ_p)$ (see \cite[Lemma 3 on page IV-23]{serreladic}). Hence, by \cite[Lemma 6.17(ii)]{bss2}, the hypotheses $({\rm H}_1)$-$({\rm H}_3)$ in \cite[\S 3.1]{bss2} are satisfied. By Lemma \ref{tflemma} and \cite[Theorem 3.14]{bss2} (see also \cite[Proposition 3.18]{bss2}), it is sufficient to check that, for any $v\mid pN$, the map
$$H^1_{/\cF}(K_v,A_1)=H^1_{/\cF}(K_v,E[p]) \to H^1_{/\cF}(K_v, {\rm Ind}_{K_m/K}(E[p]))=H^1_{/\cF}(K_v,A_{1,m})$$
induced by $\FF_p\to \FF_p[\Gamma_m]; \ 1\mapsto \sum_{\sigma\in \Gamma_m}\sigma$ is injective.

If $v\nmid p$, then we have $E(K_v)[p]=0$ by Hypothesis \ref{hyp heeg}(ii), and \cite[Lemma 3.10]{bss2} implies that $H^1_{\cF}(K_v,E[p])=H^1_{f}(K_v,E[p])=H^1(K_v,E[p])$, so $H^1_{/\cF}(K_v,E[p])=0$ and
the injectivity is trivial.

If $v\mid p$, the validity of Hypothesis \ref{hyp tech}(iii) for $T$ implies that $H^2(K_v,T_v^+)\simeq H^0(K_v,(T_v^+)^\vee(1))^\vee =0$ and hence also that $H^2(K_v,A^+_{1,v})$ vanishes. Similarly, the validity of Hypothesis \ref{hyp tech}(iii) for ${\rm Ind}_{K_m/K}(T)$ implies that $H^2(K_v,A^+_{1,m,v})$ vanishes.
It is thus sufficient to check that the map
$$H^1(K_v,A^-_{1,v})=H^1(K_v,\widetilde E_v[p]) \to H^1(K_v, {\rm Ind}_{K_m/K}(\widetilde E_v[p]))=H^1(K_v,A^-_{1,m,v})$$
is injective, where $\widetilde E_v$ denotes the reduction of $E$ modulo $v$. By Shapiro's lemma and the Inflation-Restriction sequence, we are reduced to proving that
$$H^1(K_{m,w}/K_v, H^0(K_{m,w}, \widetilde E_v[p]))=0,$$
where $w$ is a place of $K_m$ lying above $v$. 
However, we have $H^0(K_v,\widetilde E_v[p])=0$ by Hypothesis \ref{hyp heeg}(iv), which implies $H^0(K_{m,w},\widetilde E_v[p])=0$ since $K_{m,w}/K_v$ is a $p$-extension. Hence we have proved the first claim. 

To prove the last claim, we first note that $\chi(\sF)=0$ in the present setting (see Proposition \ref{prop perf}(ii)). So it is sufficient to show $\chi(\cF)=0$. If $\fn$ is a core vertex, then by the Poitou-Tate exact sequence and Proposition \ref{prop canonical selmer} we have an exact sequence
$$0\to {\rm Sel}_{p^n}(E/K_m) \to H^1_{\cF^\fn}(K,A_{n,m}) \to \bigoplus_{\fq \mid \fn} H^1_{/\cF}(K_\fq, A_{n,m}) \to {\rm Sel}_{p^n}(E/K_m)^\ast \to 0.$$
Since the modules appearing in this sequence are all finite, we have 
$$\# H^1_{\cF^\fn}(K,A_{n,m})= \prod_{\fq \mid \fn}\# H^1_{/\cF}(K_\fq,A_{n,m}).$$
Hence we have $\chi(\cF)=0$ by the definition (\ref{core formula}). This completes the proof. 
\end{proof}

Under Hypothesis \ref{hyp heeg}, Perrin-Riou's ``Heegner point main conjecture" is proved in \cite{BCK}. Combining this result with \cite[Proposition 5.12]{ks}, we obtain the following. 

\begin{theorem}\label{HIMC}
Assume Hypothesis \ref{hyp heeg}. Let ${\rm Sel}(\TT)$ be the $\Lambda$-adic Selmer group and $y_\infty \in {\rm Sel}(\TT)$ the system of Heegner points as in \cite[\S 5.2.1]{ks}. 
Let
$$\pi: Q(\Lambda)\otimes_\Lambda {\det}_\Lambda^{-1}(\widetilde \rgamma_\sF(K,\TT)) \xrightarrow{\sim} Q(\Lambda )\otimes_\Lambda {\rm Sel}(\TT)^{\otimes 2}$$
be the canonical isomorphism in \cite[(5.2.1)]{ks}, where $Q(\Lambda)$ denotes the quotient field of $\Lambda$. Then there is a (unique) $\Lambda$-basis
$$\fz_\infty^{\rm Hg} \in {\det}_\Lambda^{-1}(\widetilde \rgamma_\sF(K,\TT)) $$
such that $\pi(\fz_\infty^{\rm Hg})=y_\infty\otimes y_\infty$. 
\end{theorem}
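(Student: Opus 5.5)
The plan is to deduce the statement from the Heegner point main conjecture of \cite{BCK}, using \cite[Proposition 5.12]{ks} to turn it into a statement about lattices. Write $C:=\widetilde \rgamma_\sF(K,\TT)$. First I would record the structural facts under Hypothesis \ref{hyp heeg}.
\begin{itemize}
\item[(a)] $C$ is a perfect complex of $\Lambda$-modules with Euler characteristic $-\chi(\sF)=0$ (Proposition \ref{prop perf}).
\item[(b)] $C$ is acyclic outside degrees one and two, by the argument of Lemma \ref{lem Cnek}, since Hypothesis \ref{hyp heeg} gives Hypothesis \ref{hyp tech} for every ${\rm Ind}_{K_m/K}(T)$.
\item[(c)] $H^1(C)={\rm Sel}(\TT)$, by Lemma \ref{lem sel}(i) passed to the limit.
\end{itemize}

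By Cornut--Vatsal, $y_\infty$ is non-torsion. The Kolyvagin-system argument of Howard then gives that ${\rm Sel}(\TT)$ has $\Lambda$-rank one and that $H^2(C)$ has rank one. Since ${\rm Sel}(\TT)$ is a reflexive rank-one module over the two-dimensional regular local ring $\Lambda$, it is free. I would fix a generator $s$ and write $y_\infty=f\cdot s$ with $f\in\Lambda$ nonzero, so that $(f)={\rm char}_\Lambda({\rm Sel}(\TT)/\Lambda y_\infty)$.

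Next I would invoke \cite[Proposition 5.12]{ks}, which computes the image lattice
$$\pi\bigl({\det}_\Lambda^{-1}(C)\bigr)={\rm char}_\Lambda\bigl(H^2(C)_{\rm tors}\bigr)\cdot \Lambda\, (s\otimes s)\subset Q(\Lambda)\otimes_\Lambda{\rm Sel}(\TT)^{\otimes 2}.$$
This computation uses the self-duality of $C$ coming from the Weil pairing, which is what identifies $H^2(C)$ up to torsion with ${\rm Sel}(\TT)^\ast$ and defines $\pi$. By Lemma \ref{lem sel}(ii) and duality, $H^2(C)_{\rm tors}$ is the torsion submodule of the Pontryagin dual $X$ of the $\Lambda$-adic Selmer group of $E[p^\infty]$ over $K_\infty$, up to the canonical involution of $\Lambda$. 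The characteristic ideal is stable under that involution in the anticyclotomic setting, so the involution does not affect the ideal.

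The Heegner point main conjecture proved in \cite{BCK} asserts that
$${\rm char}_\Lambda(X_{\rm tors})={\rm char}_\Lambda\bigl({\rm Sel}(\TT)/\Lambda y_\infty\bigr)^2=(f^2).$$
Combining this with the displayed lattice gives $\pi({\det}_\Lambda^{-1}(C))=\Lambda\, f^2(s\otimes s)=\Lambda\,(y_\infty\otimes y_\infty)$. Hence $\fz_\infty^{\rm Hg}:=\pi^{-1}(y_\infty\otimes y_\infty)$ lies in ${\det}_\Lambda^{-1}(C)$ and is a $\Lambda$-basis of it. Uniqueness holds because ${\det}_\Lambda^{-1}(C)$ is free of rank one, hence torsion-free, so it injects into $Q(\Lambda)\otimes_\Lambda{\det}_\Lambda^{-1}(C)$, on which $\pi$ is an isomorphism.

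The main obstacle is matching normalizations between \cite{ks} and \cite{BCK}. One issue is whether \cite[Proposition 5.12]{ks} produces ${\rm char}(H^2(C)_{\rm tors})$ or its inverse. Another is whether the torsion module there is exactly $X_{\rm tors}$, or differs by pseudo-null modules or local error terms at $p$ and at primes dividing $N$. I would settle these first, using Hypothesis \ref{hyp heeg}(ii) and (iv), which kill the local $H^0$ and Tamagawa contributions, before assembling the argument above.
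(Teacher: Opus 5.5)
Your proposal is correct and follows the paper's route. The paper obtains the statement simply by combining the proof of Perrin-Riou's Heegner point main conjecture in \cite{BCK} with \cite[Proposition 5.12]{ks}. Your lattice computation $\pi({\det}_\Lambda^{-1}(C))={\rm char}_\Lambda(H^2(C)_{\rm tors})\cdot\Lambda(s\otimes s)$, together with the identification of $H^2(C)_{\rm tors}$ with $X_{\rm tors}$, spells out that combination. The normalization issues you flag (direction of the characteristic ideal, the involution, pseudo-null discrepancies) are the ones that \cite[Proposition 5.12]{ks} is cited to handle.
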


Using the element $\fz_\infty^{\rm Hg}$ in Theorem \ref{HIMC}, we shall construct a ``Heegner point Stark system" in the following way. 

In the following, we assume Hypothesis \ref{hyp heeg} (which implies Hypothesis \ref{hyp tech}). 
By Lemma \ref{lem hyp}, we see that Hypothesis \ref{hyp core} is satisfied. Hence we can apply Theorem \ref{thm stark} to obtain a canonical isomorphism
$$\varpi_{n,m}: {\det}_{\ZZ_p[\Gamma_m]}^{-1}(\widetilde \rgamma_\sF(K,{\rm Ind}_{K_m/K}(T)))/p^n \xrightarrow{\sim} {\rm SS}_0(A_{n,m}):= {\rm SS}_0(A_{n,m},\cF,\cP_{n,m}).$$

Let
$$\fz_m^{\rm Hg} \in {\det}_{\ZZ_p[\Gamma_m]}^{-1}(\widetilde \rgamma_\sF(K,{\rm Ind}_{K_m/K}(T)))$$
be the image of $\fz_\infty^{\rm Hg}$ under the natural surjection
$${\det}_\Lambda^{-1}(\widetilde \rgamma_\sF(K,\TT))\twoheadrightarrow {\det}_\Lambda^{-1}(\widetilde \rgamma_\sF(K,\TT)) \otimes_{\Lambda}\ZZ_p[\Gamma_m]\simeq {\det}_{\ZZ_p[\Gamma_m]}^{-1}(\widetilde \rgamma_\sF(K,{\rm Ind}_{K_m/K}(T))),$$
where the last isomorphism is due to \cite[Proposition 8.10.10]{nekovar}. 

\begin{definition}\label{def heeg stark}
We define the {\it Heegner point Stark system} (for $A_{n,m}$) by
$$\epsilon_{n,m}^{\rm Hg} := \varpi_{n,m}( \fz_m^{\rm Hg}) \in {\rm SS}_0(A_{n,m}).$$
\end{definition}

\begin{remark}
Since $\fz_\infty^{\rm Hg}$ is a $\Lambda$-basis, we see that $\epsilon_{n,m}^{\rm Hg}$ is an $\cR_{n,m}$-basis. 
\end{remark}

By Proposition \ref{prop canonical selmer} and Corollary \ref{cor stark}, we obtain the following. 

\begin{theorem}\label{thm heeg}
Assume Hypothesis \ref{hyp heeg}. 
Then for any $n\geq 1$, $m\geq 0$ and $i\geq 0$ we have
$${\rm Fitt}_{\cR_{n,m}}^i({\rm Sel}_{p^n}(E/K_m)^\ast) = I_i(\epsilon_{n,m}^{\rm Hg}).$$
\end{theorem}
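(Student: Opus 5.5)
The plan is to apply Corollary \ref{cor stark} in the equivariant setting $\cR=\ZZ_p[\Gamma_m]$, $T$ replaced by ${\rm Ind}_{K_m/K}(T)$, with $\cR_n=\cR_{n,m}$ and $A_n=A_{n,m}$, taking the basis $\fz=\fz_m^{\rm Hg}$. The key steps are the following.

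First I would check the hypotheses of Theorem \ref{thm stark} in this setting. Hypothesis \ref{hyp tech} for ${\rm Ind}_{K_m/K}(T)$ follows from Hypothesis \ref{hyp heeg}, as noted just after Hypothesis \ref{hyp heeg}. The existence of a core vertex in $\cN_{n,m}$ with $\chi(\cF)=\chi(\sF)=0$ is Lemma \ref{lem hyp}. One also needs $\cP_{n,m}$ to be a set of primes of the form required in \S\ref{sec stark}. For $\fq\nmid pN$ the representation is unramified at $\fq$, and $H^1_\cF(K_\fq,-)=H^1_f=H^1_{\rm ur}$ there, so the defining condition $H^1_{/{\rm ur}}(K_\fq,A_{n,m})\simeq\cR_{n,m}$ is exactly the condition $H^1_{/\cF}(K_\fq,A_{n,m})\simeq \cR_{n,m}$ used in that section. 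Also $\cR=\ZZ_p[\Gamma_m]$ is a local Gorenstein $\ZZ_p$-order, since $\Gamma_m$ is a $p$-group. Hence $\varpi_{n,m}$ exists and is an isomorphism, and $\epsilon^{\rm Hg}_{n,m}=\varpi_{n,m}(\fz_m^{\rm Hg})$ is the element called $\epsilon$ in Corollary \ref{cor stark}.

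Second, I would verify that $\fz_m^{\rm Hg}$ is a $\ZZ_p[\Gamma_m]$-basis. This holds because $\fz_\infty^{\rm Hg}$ is a $\Lambda$-basis by Theorem \ref{HIMC}, and base change along $\Lambda\to\ZZ_p[\Gamma_m]$ (the control isomorphism of \cite{nekovar}) sends bases to bases. Corollary \ref{cor stark} then gives
$${\rm Fitt}^i_{\cR_{n,m}}\bigl(H^1_{\cF^\ast}(K,A_{n,m}^\ast(1))^\ast\bigr)=I_i(\epsilon^{\rm Hg}_{n,m}).$$

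Finally, I would apply Proposition \ref{prop canonical selmer}, which uses only Hypothesis \ref{hyp heeg}(iv), to identify $H^1_{\cF^\ast}(K,A_{n,m}^\ast(1))$ with ${\rm Sel}_{p^n}(E/K_m)$. Two points need care here. First, this identification should be $\cR_{n,m}$-linear, which comes from Shapiro's lemma together with the Weil pairing being $\Gamma_m$-compatible. Second, the dual $(-)^\ast=\Hom_{\cR_{n,m}}(-,\cR_{n,m})$ should agree with the Pontryagin dual up to the canonical isomorphism $\Hom_{\cR_{n,m}}(M,\cR_{n,m})\simeq\Hom_{\ZZ_p}(M,\QQ_p/\ZZ_p)$, valid because $\cR_{n,m}$ is Gorenstein and of dimension zero. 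These isomorphisms do not change Fitting ideals, up to the involution on $\Gamma_m$ if conventions differ, so I would check that no twist by the involution arises.

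I expect the main obstacle to be this last compatibility: making sure the Pontryagin dual ${\rm Sel}_{p^n}(E/K_m)^\ast$ in the statement matches the $\cR_{n,m}$-linear dual used in Corollary \ref{cor stark}, including the $\Gamma_m$-action, and that the Weil-pairing self-duality introduces no involution. Everything else is a direct combination of earlier results.
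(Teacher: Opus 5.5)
Your proposal is correct and is essentially the paper's own argument: the paper obtains the theorem directly from Corollary \ref{cor stark}, applied to the basis $\fz_m^{\rm Hg}$ in the equivariant setting (with hypotheses supplied by the remark after Hypothesis \ref{hyp heeg} and by Lemma \ref{lem hyp}), combined with the identification in Proposition \ref{prop canonical selmer}. Your extra checks, namely that $\cP_{n,m}$ has the required form and that the duality identifications are $\Gamma_m$-compatible with no involution, are points the paper leaves implicit in the word ``canonical''.
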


\begin{remark}\label{rem koly}
Theorem \ref{thm heeg} can be seen as an equivariant generalization of the classical structure theorem of Kolyvagin \cite{kolyvagin1}, \cite{kolyvagin2} (see also \cite[Theorem 4.7]{zhang}). 
\end{remark}

%
%

\begin{remark}\label{rem kurihara}
Our method can also be applied to the cyclotomic setting as in \cite{kurihara}. To explain this, let $\QQ_\infty/\QQ$ be the cyclotomic $\ZZ_p$-extension and $\Lambda$ the associated Iwasawa algebra. Let $\TT:=T\otimes_{\ZZ_p}\Lambda$ be the cyclotomic deformation of $T:=T_p(E)$. Let $\cL^{\rm MSD} \in Q(\Lambda)$ be the $p$-adic $L$-function of Mazur-Swinnerton-Dyer. Then Mazur's cyclotomic Iwasawa main conjecture is equivalent to the following: $Q(\Lambda)\lotimes_\Lambda\widetilde \rgamma_\sF(\QQ,\TT)$ is acyclic and there is a (unique) $\Lambda$-basis
$$\fz_\infty^{\rm MSD} \in {\det}_\Lambda^{-1}(\widetilde \rgamma_\sF(\QQ,\TT))$$
such that the canonical isomorphism
$$Q(\Lambda)\otimes_\Lambda {\det}_\Lambda^{-1}(\widetilde \rgamma_\sF(\QQ,\TT))\xrightarrow{\sim}Q(\Lambda)$$
sends $\fz_\infty^{\rm MSD}$ to $\cL^{\rm MSD}$. Using this element, we can construct a Stark system
$$\epsilon_{n,m}^{\rm MSD} \in {\rm SS}_0(A_{n,m})$$
exactly in the same way: we have a canonical isomorphism
$$\varpi_{n,m}^{\rm cyc}: {\det}_{\ZZ_p[\Gamma_m]}^{-1}(\widetilde \rgamma_\sF(\QQ,{\rm Ind}_{\QQ_m/\QQ}(T)))/p^n \xrightarrow{\sim} {\rm SS}_0(A_{n,m})$$
 by Theorem \ref{thm stark} and we define $\epsilon_{n,m}^{\rm MSD}:=\varpi_{n,m}^{\rm cyc}(\fz_m^{\rm MSD})$. We have the analogue of Theorem \ref{thm heeg}, namely, 
$${\rm Fitt}_{\cR_{n,m}}^i ({\rm Sel}_{p^n}(E/\QQ_m)^\ast) = I_i(\epsilon_{n,m}^{\rm MSD}).$$
In particular, letting $m=0$ and taking the limit with respect to $n$, we obtain
$${\rm Fitt}_{\ZZ_p}^i({\rm Sel}_{p^\infty}(E/\QQ)^\vee) = \Theta_i^{\rm MSD}:= \varprojlim_n I_i(\epsilon_{n,0}^{\rm MSD}). $$
The latter equality is of the same form as \cite[Theorem B]{kurihara}. 
(However, it is non-trivial that $\Theta_i^{\rm MSD}$ coincides with $\Theta_i(\QQ)$ in \cite[Theorem B]{kurihara}.) 
\end{remark}

\section{Derived $p$-adic heights}

In this section, we compare derived $p$-adic heights of Bertolini-Darmon \cite{BD der} with those of Nekov\'a\v{r} \cite{nekovar}. 

\subsection{Statement}\label{sec statement}

Let us state the main result of this section. 

Let $p$ be an odd prime number. Let $K$ be a number field and $K_\infty/K$ a $\ZZ_p$-extension. We set $\Gamma:=\Gal(K_\infty/K)$ and $\Lambda:= \ZZ_p[[\Gamma]]$. 
Let $I\subset \Lambda$ be the augmentation ideal and set 
$$Q^k:=I^k/I^{k+1}.$$
Let $E$ be an elliptic curve defined over $K$ which has good ordinary reduction at each place lying above $p$. Let ${\rm Sel}_{p^n}(E/K)$ be the usual $p^n$-Selmer group for $E$ over $K$ and set
$$S_p(E/K):=\varprojlim_n {\rm Sel}_{p^n}(E/K).$$
Let ${\rm Tam}(E/K)$ be the product of Tamagawa factors of $E$. For a finite place $v$ of $K$, let $\FF_v$ denote the residue field of $v$. For place $v$ lying above $p$, let $\widetilde E_v$ be the reduction of $E$ modulo $v$. As in \cite[\S 1.1]{BD der}, we make the following assumptions.

\begin{hypothesis}\label{hyp}\
\begin{itemize}
\item[(i)] $p$ does not divide ${\rm Tam}(E/K) \cdot \prod_{v\mid p} \# \widetilde E_v(\FF_v)$. 
\item[(ii)] The image of the Galois representation
$$\rho: G_K \to {\rm Aut}(E[p])\simeq {\rm GL}_2(\FF_p)$$
contains a Cartan subgroup. 
\end{itemize}
\end{hypothesis}

Under Hypothesis \ref{hyp}, Bertolini-Darmon defined a filtration
$$S_p(E/K)=S_p^{(1)}\supset S_p^{(2)}\supset \cdots \supset S_p^{(p)}$$
and derived $p$-adic height pairings
\begin{equation}\label{BD intro}
\langle \cdot,\cdot \rangle_k^{\rm BD}: S_p^{(k)}\times S_p^{(k)} \to Q^k
\end{equation}
for $1\leq k\leq p-1$ (see \cite[\S 2.4]{BD der}). (When $k=1$, this is the usual $p$-adic height pairing.) 

In \cite[\S 11.5]{nekovar}, Nekov\'a\v{r} considered an analogue of derived $p$-adic heights. 
Let
\begin{equation}\label{iw selmer}
C_\infty:=\widetilde \rgamma_{f,{\rm Iw}}(K_\infty/K,T)
\end{equation}
be the Iwasawa-Selmer complex with respect to Greenberg's local conditions (as in \cite[\S 9.6.1]{nekovar}), where $T:=T_p(E)$ denotes the $p$-adic Tate module of $E$. (See Remark \ref{rem iw} below for the definition.) 
The $I$-adic filtration on $C_\infty$ gives a spectral sequence starting with $E_1^{i,j}=H^{i+j}(I^i C_\infty/I^{i+1}C_\infty)$. We call the differential
$$\beta^{(k)}:=d_k^{0,1}: E_k^{0,1}\to E_k^{k,2-k}$$
the $k$-th ``derived Bockstein map". (When $k=1$, this is the usual Bockstein map.) 
There is a natural ``cup product"
$$\cup_k: E_k^{k,2-k}\times E_k^{0,1} \to Q^k.$$
Nekov\'a\v{r}'s derived $p$-adic height pairing 
\begin{equation}\label{nek intro}
\langle \cdot,\cdot \rangle_k^{\rm Nek}: E_k^{0,1}\times E_k^{0,1} \to Q^k
\end{equation}
is defined by 
$$\langle x,y \rangle_k^{\rm Nek}:=\beta^{(k)}(x)\cup_k y.$$

\begin{theorem}\label{main}
Assume Hypothesis \ref{hyp}. Then, for any $1\leq k\leq p-1$, we have a natural identification $E_k^{0,1}=S_p^{(k)}$ and an equality
$$\langle \cdot,\cdot \rangle_k^{\rm BD} = -\langle \cdot,\cdot\rangle_k^{\rm Nek}.$$
\end{theorem}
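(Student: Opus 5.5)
The plan is to reduce Theorem \ref{main} to two ingredients announced in the introduction: the comparison quasi-isomorphism of Theorem \ref{qithm}, and an abstract algebraic comparison between the Bertolini--Darmon style pairing and the Bockstein style pairing (Theorem \ref{compari} in Appendix \ref{sec abs}). First I will check that Hypothesis \ref{hyp} implies Hypothesis \ref{hyp tech} for $T=T_p(E)$, and more generally for ${\rm Ind}_{K_m/K}(T)$ and the quotients $\TT/I^{k}$ that enter. Condition (i) and Remark \ref{HypimplyHyp} give (ii)--(iv), and the Cartan-subgroup condition gives $E(K)[p]=0$. The same condition, via the Chebotarev-type arguments of \cite{bss2} as in Lemma \ref{lem hyp}, should produce core vertices with $\chi(\cF)=\chi(\sF)$; this is the input needed for Hypothesis \ref{hyp core}. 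Theorem \ref{qithm} then gives, for every $n$, a quasi-isomorphism $\varphi_n: C_{\rm PT,n}\to C/p^n$ with $C:=\widetilde\rgamma_\sF(K,T)$ and $C_\infty\otimes_\Lambda\ZZ_p\simeq C$ (control theorem). It identifies $H^1$ with Selmer groups and identifies $H^2$ with the dual Selmer group up to the sign $-\kappa_2$. That sign is what will eventually produce the minus sign in the statement.

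Next I will recast both sides as pairings attached to one abstract datum. Bertolini--Darmon define $S_p^{(k)}$ and $\langle\cdot,\cdot\rangle^{\rm BD}_k$ from $\Lambda/I^{k+1}$-coefficient (equivalently $\ZZ/p^n[\Gamma_m]$-coefficient) Selmer groups. Concretely, they take a class in $S_p^{(k)}$, lift it to a "derived" class over $\Gamma_m$, and pair via local Tate duality, using the Poitou--Tate sequence to move between global classes and local conditions. I will show that these constructions are exactly the "BD-type" pairing built from the Poitou--Tate complexes $C_{\rm PT}$ for $A_{n,m}$, as formalized in Appendix \ref{sec abs}. On the Nekov\'a\v{r} side, $E_k^{0,1}$ and $\beta^{(k)}=d_k^{0,1}$ come from the $I$-adic filtration of $C_\infty$. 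By Proposition \ref{prop relate}, these derived Bockstein maps agree with the generalized Bockstein maps of \cite{LLSWW} computed from $C_\infty\otimes_\Lambda\Lambda/I^{k+1}$. Together with $\cup_k$ induced by Nekov\'a\v{r}'s duality, this gives the "Boc-type" pairing of the appendix.

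The key step is to transport along $\varphi$ (made compatible in $n,m$ and with the $\Gamma_m$-action via Remark \ref{equivrk}). With this, the identification $E_k^{0,1}=S_p^{(k)}$ follows by comparing the two filtrations: both are the image of $H^1$ of the level-$I^k$ quotient complex in $H^1(C)$. Then Theorem \ref{compari} gives $\langle\cdot,\cdot\rangle^{\rm BD}_k=\langle\cdot,\cdot\rangle^{\rm Boc}_k$ for the abstract data. What remains is to track the sign: the duality isomorphism on $H^2(C_{\rm PT})$ is $\nu$, while on $H^2(C)$ it is $\kappa_2$, and the second diagram in Theorem \ref{qithm} relates them with a $-1$. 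This yields $\langle\cdot,\cdot\rangle^{\rm BD}_k=-\langle\cdot,\cdot\rangle^{\rm Nek}_k$.

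I expect the main obstacle to be this middle step: matching Bertolini--Darmon's explicit cocycle-level definition, including their choices of lifts and the restriction $k\le p-1$ needed for $I^k/I^{k+1}$ manipulations, with the abstract Poitou--Tate formalism, compatibly in $n$ and $m$. I would first try to handle it by working at finite level $\ZZ/p^n[\Gamma_m]$ with $p^n$ large relative to $m$. There I would use the explicit representative of \S\ref{step2}, and the lifting maps $l$ of Lemma \ref{lifts}, to check commutativity directly on cochains.
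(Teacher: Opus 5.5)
Your proposal follows the paper's proof. The mod-$p^n$ Bertolini--Darmon pairing is identified with the abstract pairing of Appendix \ref{sec abs} for the Poitou--Tate complex $C_{{\rm PT},\Sigma}=C_{{\rm PT},\fn_\Sigma}$ (via Remark \ref{rem symm}); Theorem \ref{compari} turns this into the Bockstein pairing, which is transported along the quasi-isomorphism of Theorem \ref{qithm} for ${\rm Ind}_{K_n/K}(T)$, and the sign comes from $-\kappa_2$.

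The step you call the main obstacle is in fact formal: BD's pairing is by construction the abstract one attached to (\ref{BD seq}), and all cochain-level work is already inside Theorem \ref{qithm}. Three details should be corrected. First, the core vertex should be $\fn_\Sigma$ for a BD admissible set $\Sigma$ (Remark \ref{rem adm core}), not one produced as in Lemma \ref{lem hyp}, since that argument uses a stronger big-image hypothesis than Hypothesis \ref{hyp}(ii). Second, the layer and the coefficients must be matched: take $G=\Gal(K_n/K)$ of order $p^n$ with $\ZZ/p^n$-coefficients, so that $\cQ^k\simeq\ZZ/p^n$ for $k\leq p-1$, rather than $p^n$ large relative to $m$. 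Third, Theorem \ref{qithm} must be applied to the equivariant complex for ${\rm Ind}_{K_n/K}(T)$ over $\ZZ_p[\Gamma_n]$, not to $\widetilde\rgamma_\sF(K,T)$.
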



\begin{remark}
While we need Hypothesis \ref{hyp} to define the Bertolini-Darmon pairing $\langle \cdot,\cdot\rangle_k^{\rm BD}$, Nekov\'a\v{r}'s pairing $\langle \cdot,\cdot\rangle_k^{\rm Nek}$ can be defined without this hypothesis. Moreover, $k\geq 1$ is arbitrary in Nekov\'a\v{r}'s definition. 
\end{remark}

\subsection{Derived $p$-adic heights of Bertolini-Darmon}\label{sec review}

We recall the definition of the (``modulo $p^n$ version" of the) derived $p$-adic height pairing defined by Bertolini-Darmon in \cite[Theorem 2.7]{BD der}. 

Fix a positive integer $n$. Let $L:=K_n$ be the $n$-th layer of $K_\infty/K$. We set
$$G:=\Gal(L/K) \text{ and } \cR:=\ZZ/p^n\ZZ[G]$$
For an $\cR$-module $M$ we set
$$M^\ast:=\Hom_\cR(M,\cR).$$
Let $\cI \subset \cR$ be the augmentation ideal and set
$$\cQ^k:=\cI^k/\cI^{k+1}.$$



\begin{lemma}
Assume Hypothesis \ref{hyp}. Then the restriction map induces an isomorphism
$${\rm Sel}_{p^n}(E/K)\xrightarrow{\sim}{\rm Sel}_{p^n}(E/L)^G.$$
\end{lemma}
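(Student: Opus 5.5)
The plan is to compare the two Selmer groups through the restriction map on full Galois cohomology, and then control the local conditions by inflation--restriction. Write $L=K_n$, $G=\Gal(L/K)$, and let $S$ contain the places above $p$, the bad places of $E$ and the infinite places. Since $L/K$ is unramified outside $p$, we have $G_{L,S}\subset G_{K,S}$ with quotient $G$. The inflation--restriction sequence gives
$$0\to H^1(G,E(L)[p^n])\to H^1(G_{K,S},E[p^n])\xrightarrow{\rm res} H^1(G_{L,S},E[p^n])^G\to H^2(G,E(L)[p^n]).$$
First I show $E(L)[p]=0$. By Hypothesis \ref{hyp}(ii) the image of $\rho$ contains a Cartan subgroup, which has order prime to $p$ and acts on $E[p]$ without nonzero fixed vectors; hence $E(K)[p]=0$. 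Since $G$ is a $p$-group acting on the $p$-group $E(L)[p]$, a nonzero $E(L)[p]$ would have a nonzero $G$-fixed part, so $E(L)[p]=0$. It follows that $E(L)[p^n]=0$, the outer terms vanish, and restriction is an isomorphism on $H^1$ of the global Galois groups.

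Restriction sends ${\rm Sel}_{p^n}(E/K)$ into ${\rm Sel}_{p^n}(E/L)^G$, and injectivity is already known from the first step. For surjectivity, take $c\in H^1(G_{K,S},E[p^n])$ whose restriction lies in the Selmer group of $L$. I need $c_v\in E(K_v)/p^n$ for each $v\in S_f$. Consider the local diagram with rows the Kummer sequences over $K_v$ and $L_w$. Its cokernels are $H^1(K_v,E)[p^n]$ and $H^1(L_w,E)[p^n]$, and the kernel of the map between them is controlled by $H^1(L_w/K_v,E(L_w))$. It therefore suffices to show $H^1(L_w/K_v,E(L_w))[p^n]=0$, or at least that the relevant classes die there.

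This splits into two cases.
\begin{itemize}
\item For $v\nmid p$, the extension $L_w/K_v$ is unramified, and $H^1$ of an unramified extension with coefficients in $E$ is $H^1(L_w/K_v,\Phi_w)$, where $\Phi_w$ is the component group. Its order divides a power of $c_v$, which is prime to $p$ by Hypothesis \ref{hyp}(i). If $L_w=K_v$ (split case) there is nothing to check.
\item For $v\mid p$, with good ordinary reduction, $H^1(L_w/K_v,E(L_w))$ is computed via the formal group and the reduction. Here I use $p\nmid\#\widetilde E_v(\FF_v)$ (Hypothesis \ref{hyp}(i)), which gives $\widetilde E_v(\FF_w)[p]=0$ for every $p$-extension of residue fields and makes the reduction part cohomologically trivial. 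The formal group part is handled by Mazur's result that $\hat E(\mathfrak m_w)$ is cohomologically trivial (universal norms) for ordinary $E$ in a $\ZZ_p$-extension; this is as in \cite{greenberg}.
\end{itemize}

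The main obstacle is the local computation at $v\mid p$, in particular any ramified $v$. I would argue it through Greenberg's description of the local conditions: $E(K_v)/p^n$ is the image of $H^1(K_v,A^+_{n,v})$, and the non-anomalous condition gives $H^0(L_w,\widetilde E_v[p])=0$, exactly as in the proof of Lemma \ref{lem hyp}. With this, the map $H^1(K_v,A^-_{n,v})\to H^1(L_w,A^-_{n,v})$ is injective by inflation--restriction. Hence a class that is locally trivial in the quotient over $L_w$ is already trivial in the quotient over $K_v$. This settles the $v\mid p$ case.
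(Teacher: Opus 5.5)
Your argument is correct. The paper gives no argument of its own for this lemma; it simply cites \cite[Lemma 1.4]{BD der}. What you have written is a self-contained version of the standard proof, in three steps:
\begin{enumerate}
\item Global inflation--restriction for $G_{L,S}\subset G_{K,S}$, combined with $E(L)[p]=0$. The paper elsewhere imports this vanishing from \cite[Lemma 2.12]{BDMT}; you correctly deduce it from the Cartan hypothesis, $p$ odd, and the fact that $G$ is a $p$-group.
\item At the places $v\nmid p$, which are unramified in $L$, a check of local conditions using $p\nmid c_v$.
\item At $v\mid p$, a check of local conditions using non-anomaly.
\end{enumerate}

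Your final treatment of $v\mid p$ is exactly the mechanism the paper itself uses in Proposition \ref{prop canonical selmer} and Lemma \ref{lem hyp}. You identify both $E(K_v)/p^n$ and $E(L_w)/p^n$ with Greenberg images; this is legitimate over $L_w$ because $\widetilde E_v(\FF_w)[p]=0$. You embed the singular quotient into $H^1(-,A^-_{n,v})$. You then get injectivity of $H^1(K_v,A^-_{n,v})\to H^1(L_w,A^-_{n,v})$ by inflation--restriction from $H^0(L_w,\widetilde E_v[p^n])=0$. This handles ramified $v$ with no further input.

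The one thing to fix is the appeal to Mazur in your second bullet, which is stated too generally. Cohomological triviality of $\hat E(\mathfrak m_w)$ in a ramified $p$-extension does not follow from ordinary reduction alone. The defect in Mazur's norm computation (equivalently, the local term at $v\mid p$ in Mazur's control theorem) is governed by $\widetilde E_v(\FF_v)[p^\infty]$, so non-anomaly is needed there too. Since your last paragraph supersedes that bullet, you can simply delete it.
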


\begin{proof}
See \cite[Lemma 1.4]{BD der}. 
\end{proof}

In the following, we assume Hypothesis \ref{hyp} and identify ${\rm Sel}_{p^n}(E/K)$ with ${\rm Sel}_{p^n}(E/L)^G$. For $1\leq k \leq p-1$, we set
\begin{equation}\label{seln}
{\rm Sel}^{(k)}= {\rm Sel}^{(k)}_n:= {\rm Sel}_{p^n}(E/K)\cap \cI^{k-1} {\rm Sel}_{p^n}(E/L).
\end{equation}
We shall define the $k$-th derived $p$-adic height pairing
$$\langle \cdot,\cdot \rangle_{k,n}^{\rm BD}: {\rm Sel}^{(k)}\times {\rm Sel}^{(k)} \to \cQ^k.$$

For a finite set $\Sigma$ of finite places of $K$, the ``$\Sigma$-relaxed" Selmer group is defined by
$${\rm Sel}_{p^n}^\Sigma(E/L):=\{a \in H^1(L,E[p^n]) \mid {\rm Res}_w(a) \in E(L_w)/p^n E(L_w), \ \forall w \mid v, \ v\notin \Sigma\}.$$
Following \cite[Definition 1.5]{BD der}, we call $\Sigma$ an ``admissible set" if the following conditions are satisfied.
\begin{itemize}
\item[(i)] $\Sigma$ does not contain any places lying above $p$ and any bad places. 
\item[(ii)] Each $v\in \Sigma$ splits completely in $L$.
\item[(iii)] For each $v\in \Sigma$, we have $E(K_v)/p^n E(K_v)\simeq (\ZZ/p^n\ZZ)^2$.
\item[(iv)] The restriction map ${\rm Sel}_{p^n}(E/K)\to \bigoplus_{v\in \Sigma}E(K_v)/p^n E(K_v)$ is injective. 
\end{itemize}
By a standard argument using the Chebotarev density theorem, one can show that admissible sets exist. 

We set some notations. For a place $v$ of $K$, we set
$$E(L_v):=\bigoplus_{w\mid v} E(L_w) \text{ and }H^i(L_v,-):=\bigoplus_{w\mid v}H^i(L_w,-).$$

\begin{lemma}\label{global duality}
Let $\Sigma$ be an admissible set. 
\begin{itemize}
\item[(i)] The $\cR$-modules
$$ \bigoplus_{v\in \Sigma}E(L_v)/p^nE(L_v)\text{ and }{\rm Sel}_{p^n}^\Sigma(E/L) $$
are free of rank $2 \cdot \# \Sigma$. 
\item[(ii)] There is a canonical exact sequence
\begin{equation}\label{BD seq}
0\to {\rm Sel}_{p^n}(E/L)\to\bigoplus_{v\in \Sigma}E(L_v)/p^nE(L_v)\xrightarrow{\ell}{\rm Sel}_{p^n}^\Sigma(E/L)^\ast\to {\rm Sel}_{p^n}(E/L)^\ast \to 0.
\end{equation}
\end{itemize}
\end{lemma}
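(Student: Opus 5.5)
The idea is to read Lemma~\ref{global duality} as a special case of the Mazur--Rubin machinery of \S\ref{sec PT}, applied to $A=\mathrm{Ind}_{L/K}(E[p^n])$ over $\cR=\ZZ/p^n[G]$. By Shapiro's lemma, $H^1(K,\mathrm{Ind}_{L/K}E[p^n])=H^1(L,E[p^n])$ and $H^1(K_v,\mathrm{Ind}_{L/K}E[p^n])=H^1(L_v,E[p^n])$. Let $\cF$ be the Selmer structure given by the images of the Kummer maps. Then ${\rm Sel}_{p^n}(E/L)=H^1_\cF(K,A)$, and by the Weil pairing this also equals $H^1_{\cF^\ast}(K,A^\ast(1))$, exactly as in Proposition~\ref{prop canonical selmer}. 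Moreover ${\rm Sel}^\Sigma_{p^n}(E/L)=H^1_{\cF^\Sigma}(K,A)$, where $\cF^\Sigma$ relaxes the conditions at $v\in\Sigma$, and $(\cF^\Sigma)^\ast=\cF_\Sigma$ is the strict condition at $\Sigma$.

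For (i), take $v\in\Sigma$. Since $v$ splits completely in $L$, we have $E(L_v)/p^n=E(K_v)/p^n\otimes\ZZ/p^n[G]$. Condition (iii) then shows that $E(L_v)/p^n$ is $\cR$-free of rank $2$. Because $v\nmid p$ is a good prime, local Tate duality identifies $H^1_{/f}(L_v,E[p^n])$ with $(E(L_v)/p^n)^\ast$, which is also free of rank $2$.

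Next I would check that $H^1_{\cF_\Sigma}(K,A^\ast(1))$, i.e.\ the Selmer group over $L$ that is strict at $\Sigma$, vanishes. Its $G$-invariants inject into ${\rm Sel}_{p^n}(E/K)$, using $E(L)[p]=0$, which follows from Hypothesis~\ref{hyp}(ii). Condition (iv) then forces these invariants to be zero. Since $G$ is a $p$-group and the module is $p$-primary, it must vanish entirely. Hence $\Sigma$ behaves like a core vertex, and Poitou--Tate (Proposition~\ref{prop core}(i)) gives the exact sequence
$$0\to {\rm Sel}_{p^n}(E/L)\to {\rm Sel}^\Sigma_{p^n}(E/L)\to \bigoplus_{v\in\Sigma}H^1_{/f}(L_v,E[p^n])\to {\rm Sel}_{p^n}(E/L)^\ast\to 0.$$

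Freeness of ${\rm Sel}^\Sigma_{p^n}(E/L)$ should follow from the standard core-vertex argument. Its $p$-torsion version is obtained by passing to $E[p]$, using $H^0(L,E[p])=0$. The order count from the sequence gives $\#{\rm Sel}^\Sigma=\prod_v\#H^1_{/f}$, so the rank is $2\#\Sigma$, matching the core rank $\chi=0$ computed as in Lemma~\ref{lem hyp}. To get freeness over the local ring $\cR$, I would show the $G$-coinvariants (or invariants) mod $p$ have the right dimension, which amounts to the same vanishing argument at level $K$. Alternatively, I can cite \cite[Lemma 1.6]{BD der} or \cite[Theorem 3.14]{bss2} directly.

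Finally, (ii) is obtained by dualizing the displayed sequence with $\Hom_\cR(-,\cR)$. This is exact because $\cR$ is Gorenstein of dimension zero, hence self-injective. Under this dualization, $(H^1_{/f})^\ast\cong E(L_v)/p^n$ by local duality, and $({\rm Sel}^\ast)^\ast={\rm Sel}$. The map $\ell$ is then the dual of localization. The main obstacle is the freeness of ${\rm Sel}^\Sigma_{p^n}(E/L)$. I would handle it through the control isomorphism ${\rm Sel}_{p^n}(E/K)\cong{\rm Sel}_{p^n}(E/L)^G$ combined with Nakayama's lemma.
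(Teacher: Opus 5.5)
Your proposal is correct. For (ii) it is the paper's argument: Poitou--Tate for the $\Sigma$-relaxed Selmer structure, local Tate duality $H^1(L_v,E)[p^n]\cong (E(L_v)/p^nE(L_v))^\ast$, and dualization over the self-injective ring $\cR$.

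The only difference in (ii) is the order of the steps. The paper dualizes the four-term Poitou--Tate sequence first. It then needs injectivity of ${\rm Sel}_{p^n}(E/L)\to\bigoplus_{v\in\Sigma}E(L_v)/p^nE(L_v)$, which it attributes to admissibility in one line. You instead prove, before dualizing, that the $\Sigma$-strict Selmer group vanishes; that group is exactly the kernel of this localization map. Your argument fills in what the paper leaves implicit, namely how the level-$K$ condition (iv) gives injectivity at level $L$: take $G$-invariants, use the control isomorphism, use that $v$ splits completely, and use that a nonzero $p$-primary $G$-module has nonzero invariants.

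For (i), the paper gives no argument and simply cites Bertolini--Darmon. Your sketched alternative does work, and the cleanest way to run it is with invariants (the socle) rather than coinvariants, since invariants are what control controls. Set $M={\rm Sel}^\Sigma_{p^n}(E/L)$.
\begin{itemize}
\item Control, applied to the $\Sigma$-relaxed group and not just to ${\rm Sel}_{p^n}$ (same proof), gives $M^G[p]\cong{\rm Sel}^\Sigma_p(E/K)$.
\item This group has $\FF_p$-dimension $2\#\Sigma$ by the mod-$p$ Poitou--Tate sequence over $K$.
\item That step needs (iv) mod $p$. This holds because, by (iii), the map $E(K_v)/p\to E(K_v)/p^n$, $x\mapsto p^{n-1}x$, is injective.
\item Since $\cR$ is self-injective with one-dimensional socle, $M$ embeds in $\cR^{2\#\Sigma}$.
\item Your order count $\#M=(\#\cR)^{2\#\Sigma}$ then forces equality.
\end{itemize}
Alternatively, the same socle bound plus Nakayama applied to $M^\ast$ gives freeness of $M^\ast$, hence of $M$.
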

\begin{proof}
(i) is proved in \cite[Lemma 1.7]{BD der} and \cite[Theorem 3.2]{BDMT}. (ii) is proved in \cite[Proposition 1.6]{BD der}, but we give a sketch of the proof for the reader's convenience. By the Poitou-Tate global duality, we have an exact sequence
$$0\to {\rm Sel}_{p^n}(E/L)\to {\rm Sel}_{p^n}^\Sigma(E/L) \to \bigoplus_{v\in \Sigma} H^1(L_v, E)[p^n]\to {\rm Sel}_{p^n}(E/L)^\ast.$$
By taking the dual, we obtain an exact sequence
$${\rm Sel}_{p^n}(E/L)\to \bigoplus_{v\in \Sigma}E(L_v)/p^nE(L_v)\xrightarrow{\ell}{\rm Sel}_{p^n}^\Sigma(E/L)^\ast\to {\rm Sel}_{p^n}(E/L)^\ast \to 0.$$
Here we used the local Tate duality isomorphism
$$ \bigoplus_{v\in \Sigma}H^1(L_v, E)[p^n]\simeq \bigoplus_{v\in \Sigma}(E(L_v)/p^nE(L_v))^\ast.$$
It is sufficient to show that the restriction map ${\rm Sel}_{p^n}(E/L)\to \bigoplus_{v\in \Sigma}E(L_v)/p^nE(L_v)$ is injective. But this follows from the assumption that $\Sigma$ is admissible. 
\end{proof}

\begin{remark}\label{rem adm core}
Set $\fn_\Sigma:=\prod_{v\in \Sigma}v$. If $\Sigma$ is admissible, then we see by Lemma \ref{global duality} that $\fn_\Sigma$ is a core vertex (for the Selmer structure $\cF$ attached to $\sF$ as in Example \ref{ex greenberg}) in the sense of Definition \ref{def core}. Moreover, the core rank is zero. 
\end{remark}

We take the exact sequence (\ref{tate seq}) to be (\ref{BD seq}). Then the construction in \S \ref{sec BD height} yields a pairing
$$\langle \cdot,\cdot\rangle_{k,n}^{\rm BD}: {\rm Sel}^{(k)} \times {\rm Sel}^{(k)} \to \cQ^k.$$
(Note that in the present setting we have 
$$\cS={\rm Sel}_{p^n}(E/L),\ \cT={\rm Sel}_{p^n}(E/L),\  X=\bigoplus_{v\in \Sigma}E(L_v)/p^nE(L_v), \text{ and }Y={\rm Sel}_{p^n}^\Sigma(E/L),$$
and so ${\rm Sel}^{(k)}$ coincides with $\cS_0^{(k)}$ and $\cT_0^{(k)}$ in \S \ref{B notation}.) This pairing is the same as the one constructed in \cite[Theorem 2.7]{BD der}.

Note that the pairing (\ref{BD intro}) is defined by a ``limit" of $\langle \cdot ,\cdot \rangle_{k,n}^{\rm BD}$. See \cite[\S 2.4]{BD der} for details. 

\subsection{Derived $p$-adic heights of Nekov\'a\v{r}}\label{sec nek}

In this subsection, we define the derived $p$-adic height pairing of Nekov\'a\v{r}.

We set some notations. 
Let $S_\infty$ (resp. $S_p$) be the set of infinite (resp. $p$-adic) places of $K$. Let $S_{\rm bad}$ be the set of places of $K$ at which $E$ has bad reduction. Take a finite set $S$ of places of $K$ which contains $S_\infty\cup S_p\cup S_{\rm bad}$. 
%
%
%
%
Take the data $\sF=(\sF_v)_{v\in S_p}$ in (\ref{greenberg local}) to be as in Example \ref{ex greenberg} and let
$$C:=\widetilde \rgamma_\sF(K,{\rm Ind}_{L/K}(T))$$
be the associated Selmer complex (see Definition \ref{def sel}). 

We summarize basic properties of the Selmer complex.

\begin{proposition}\label{prop basic}
Assume Hypothesis \ref{hyp}. Then we have the following. 
\begin{itemize}
\item[(i)] The complex $C$ is quasi-isomorphic to the ``classical Selmer complex" ${\rm SC}_{p}(E_{L/K})$ defined in \cite[Definition 2.3]{BMC}. 
\item[(ii)] The complex $C$ is a perfect complex of $\ZZ_p[G]$-modules and acyclic outside degrees one and two. 
\item[(iii)] We have canonical isomorphisms
$$H^1(C)\simeq S_p(E/L):=\varprojlim_m {\rm Sel}_{p^m}(E/L)$$
and 
$$H^2(C)\simeq {\rm Sel}_{p^\infty}(E/L)^\vee,$$
where ${\rm Sel}_{p^\infty}(E/L):= \varinjlim_m {\rm Sel}_{p^m}(E/L)$ and $(\cdot)^\vee$ denotes the Pontryagin dual. In particular, $H^1(C)$ is $\ZZ_p$-torsion-free.
\item[(iv)] We have the base change property:
$$C_0:= C\lotimes_{\ZZ_p[G]}\ZZ_p \simeq \widetilde \rgamma_\sF(K,T).$$
\item[(v)] The complex $C$ is self-dual, i.e., there is a canonical isomorphim
$$\rhom_{\ZZ_p[G]}(C, \ZZ_p[G])[-3]\simeq C^\iota.$$
Here $C^\iota$ denotes the same complex $C$ on which $G$ acts via the involution $\iota: G\to G; \ \sigma\mapsto \sigma^{-1}$.  
\end{itemize}
\end{proposition}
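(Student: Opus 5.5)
The plan is to reduce almost everything to results already available in the paper and in \cite{nekovar}. The first step is to check that, for $T':={\rm Ind}_{L/K}(T)$ with $\cR=\ZZ_p[G]$ (a local Gorenstein order since $G$ is a $p$-group), Hypothesis \ref{hyp} implies Hypothesis \ref{hyp tech}. For (i), $H^0(K,E[p^\infty]\otimes\ZZ_p[G]^\vee)=H^0(L,E[p^\infty])$. This vanishes because $E(L)[p]=0$: the image of $G_K$ on $E[p]$ contains a Cartan subgroup, so $E(K)[p]=0$, and since $L/K$ is a $p$-extension, a nonzero $p$-group fixed space would descend. The same reasoning, together with the Weil pairing, gives the dual statement. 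For (ii) and (iii), Shapiro's lemma and the non-anomalous part of Hypothesis \ref{hyp}(i), again with the $p$-extension argument, give $H^0(L_w,\widetilde E_v[p^\infty])=0$ and its Cartier dual analogue. For (iv), $p\nmid{\rm Tam}(E/K)$ gives $H^1_{\rm ur}=H^1_f$ at bad places. Places of $L$ above bad places need the fact that $L/K$ is unramified outside $p$, so Tamagawa numbers behave well; here I would also invoke Hypothesis \ref{hyp}(i) at the finite layers.

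For (ii), perfection follows from Proposition \ref{prop perf}(i). Acyclicity outside degrees $1,2$ is Lemma \ref{lem Cnek} applied to $T'$, whose proof uses exactly Hypothesis \ref{hyp tech}(i). For (iii), Lemma \ref{lem sel}(i),(ii) gives $H^1(C)=H^1_\cF(K,T')$ and $H^2(C)\simeq H^1_{\cF^\ast}(K,T'^\vee(1))^\vee$. By Shapiro's lemma these are Selmer groups over $L$ with Greenberg conditions at $p$. These coincide with the Bloch--Kato/Kummer conditions by Greenberg's comparison in the non-anomalous case, as in Proposition \ref{prop canonical selmer}. Passing to the limit in $m$ then gives $S_p(E/L)$ and ${\rm Sel}_{p^\infty}(E/L)^\vee$, using the Weil pairing $T^\vee(1)\simeq E[p^\infty]$. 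Torsion-freeness of $H^1$ follows from Lemma \ref{tflemma}(iii).

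For (iv), I use Nekov\'a\v{r}'s base change/control result \cite[Proposition 8.10.1]{nekovar}. Since ${\rm Ind}_{L/K}(T)\otimes_{\ZZ_p[G]}\ZZ_p=T$ compatibly with $T_v^\pm$, and $(T'^{I_v})\otimes\ZZ_p=T^{I_v}$ with $T'^{I_v}$ free (as $L/K$ is unramified at $v\nmid p$), each term in the cone commutes with derived base change. For (v), I apply Nekov\'a\v{r}'s duality \cite[Proposition 9.7.2(i)]{nekovar}. This gives $\rhom(C,\ZZ_p[G])[-3]\simeq\widetilde\rgamma_{\sF^\ast}(K,\Hom(T',\ZZ_p[G])(1))$, and the Weil pairing identifies $\Hom(T',\ZZ_p[G])(1)$ with $T'^\iota$ and $\sF^\ast$ with $\sF$: $T_v^+$ and $T_v^-$ are exact annihilators under the Weil pairing. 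The unramified conditions are self-dual under Hypothesis \ref{hyp tech}(iv).

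For (i), I compare with the definition in \cite{BMC}. That complex is built as a Selmer complex with the same Greenberg data and unramified conditions (or is shown there to be quasi-isomorphic to one), so I would cite the relevant comparison in \cite{BMC}, using (iii) to match cohomology. The main obstacle is verifying Hypothesis \ref{hyp tech}(iv) and (iii) for the induced module, and matching the Greenberg local condition with the Kummer image at places of $L$ above $p$. I would handle this via Shapiro's lemma and the $p$-extension argument as in the proof of Lemma \ref{lem hyp}.
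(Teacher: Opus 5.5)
Your treatment of (ii)--(v) is sound and takes a somewhat different, more self-contained route than the paper. After checking Hypothesis \ref{hyp tech} for ${\rm Ind}_{L/K}(T)$ (the paper does the same via $E(L)[p]=0$ and Remark \ref{HypimplyHyp}), you derive each statement from the paper's own results. Perfection comes from Proposition \ref{prop perf}, and the vanishing of $H^0$ and $H^3$ from Lemma \ref{lem Cnek}. The descriptions of $H^1(C)$ and $H^2(C)$ and the torsion-freeness come from Lemmas \ref{lem sel} and \ref{tflemma}. Base change and self-duality come from Nekov\'a\v{r}'s control and duality theorems. The paper instead largely quotes \cite[Proposition 6.3]{BMC}, which concerns ${\rm SC}_p(E_{L/K})$ and so reaches $C$ through (i); your route avoids that dependence. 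In (v), what $p\nmid{\rm Tam}$ really buys is the vanishing of Nekov\'a\v{r}'s error terms for the unramified conditions at the places of $L$, and your reduction to Hypothesis \ref{hyp tech}(iv) is correct in substance. One small imprecision there: the right statement is that $T_v^+$ is its own exact annihilator under the Weil pairing, so that $(T_v^-)^\ast(1)\simeq T_v^+$.

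The genuine gap is in (i). The complex ${\rm SC}_p(E_{L/K})$ is not built from Greenberg and unramified data. It is Nekov\'a\v{r}'s Selmer complex with local conditions $U_v^+=E(L_v)^\wedge[-1]$, mapping to $\rgamma(L_v,T)$ via the Kummer map, at every finite $v\in S$. Moreover, matching cohomology groups via (iii) cannot produce a quasi-isomorphism between two complexes. What is needed is an identification of the local conditions themselves, compatible with their maps to $\rgamma(L_v,T)$; this then induces a quasi-isomorphism of the mapping cones.
\begin{itemize}
\item For $v\mid p$: we have $H^0(L_v,T_v^+)\subset H^0(L_v,T)=0$ and $H^2(L_v,T_v^+)\simeq H^0(L_v,A_v^-)^\vee=0$. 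The map $H^1(L_v,T_v^+)\to H^1(L_v,T)$ is injective because $H^0(L_v,T_v^-)=0$, and its image is the Kummer image by \cite[Proposition 2.5]{greenberg} in the non-anomalous case. Hence $\rgamma(L_v,T_v^+)\simeq E(L_v)^\wedge[-1]$.
\item For $v\nmid p$: $\rgamma_{\rm ur}(L_v,T)$ is concentrated in degree one, since its $H^0$ is $H^0(L_v,T)=0$ and unramified cohomology has no $H^2$. Its $H^1$ is $H^1_{\rm ur}(L_v,T)=H^1_f(L_v,T)=E(L_v)^\wedge$ because $p\nmid{\rm Tam}$. So again $\rgamma_{\rm ur}(L_v,T)\simeq E(L_v)^\wedge[-1]$.
\end{itemize}
Every input here is one you already verify when checking Hypothesis \ref{hyp tech}, so the repair is short. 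As written, however, (i) rests on an unidentified citation and an inaccurate description of the complex being compared.
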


\begin{proof}
(i) Note that the classical Selmer complex ${\rm SC}_p(E_{L/K})$ is defined by taking the local condition
$$U_v^+(T):= E(L_v)^\wedge[-1]$$
in \cite[\S 6.1]{nekovar}, where $(\cdot)^\wedge$ denotes the $p$-completion. 
Since $\widetilde E_v(\FF_v)[p]=0$ for $v\mid p$, we have
$$\rgamma(L_v,T_v^+)\simeq E(L_v)^{\wedge}[-1]$$
by \cite[Proposition 2.5]{greenberg}. Also, $p\nmid {\rm Tam}(E/K)$ implies
$$\rgamma_{\rm ur}(L_v,T)\simeq E(L_v)^\wedge[-1]$$
for $v \nmid p$. This proves the claim. 

(ii) Note that the hypotheses (${\rm H}_1$)--(${\rm H}_5$) in \cite[Chap. 6]{BMC} are satisfied for $L/K$ by Hypothesis \ref{hyp}. 
The first two claims follow from \cite[Proposition 6.3(i) and (ii)]{BMC} or \cite[Proposition 9.7.7(iii)]{nekovar}. We note that Hypothesis \ref{hyp} also implies that $E(L)[p]=0$ by \cite[Lemma 2.12]{BDMT} so, as in Remark  \ref{HypimplyHyp}, Hypothesis \ref{hyp tech} is also satisfied. In particular, since the $\ZZ_p$-module $H^1(G_{K,S},T)$ is torsion-free by Lemma \ref{tflemma}(iii) and $S_p(E/L)$ is a submodule of $H^1(G_{K,S},T)$, the final claim is also valid.

(iii) This follows from \cite[Proposition 6.3(iv)]{BMC}. 

(iv) See \cite[Proposition 6.3(iii)]{BMC} or \cite[Proposition 9.7.3(i)]{nekovar}.

(v) Since $p\nmid {\rm Tam}(E/K)$, this follows from \cite[Proposition 9.7.3(iv)]{nekovar}. (We identify $T^\ast(1)$ with $T$ by the Weil pairing.)
\end{proof}

\begin{remark}\label{rem iw}
The ``Iwasawa-Selmer complex" in (\ref{iw selmer}) is defined to be 
$$\widetilde \rgamma_{f,{\rm Iw}}(K_\infty/K,T):=\varprojlim_n \widetilde \rgamma_\sF(K,{\rm Ind}_{K_n/K}(T)).$$
This has analogous properties as in Proposition \ref{prop basic}. In particular, it is a perfect complex of $\Lambda$-modules, acyclic outside degrees one and two, and self-dual. 
\end{remark}

Since $C$ is perfect, acyclic outside degrees one and two with $\QQ_p\otimes_{\ZZ_p}H^1(C)\simeq\QQ_p\otimes_{\ZZ_p}H^2(C)$ and $H^1(C)$ is $\ZZ_p$-free by Proposition \ref{prop basic}(ii) and (iii), a standard argument shows that, up to quasi-isomorphism, $C$ is of the form
$$C=\left[ \ZZ_p[G]^d\to \ZZ_p[G]^d \right]$$
for some natural number $d$. We set
$$C/p^n :=C\lotimes_{\ZZ_p} \ZZ/p^n \ZZ.$$
We apply the construction in \S \ref{sec bock pairing} to the exact sequence arising from $C/p^n$:
$$0\to H^1(C/p^n) \to \ZZ/p^n\ZZ[G]^d \to \ZZ/p^n\ZZ[G]^d \to H^2(C/p^n)\to 0.$$
Namely, we take $\cS:=H^1(C/p^n)$, $X:=\ZZ/p^n\ZZ[G]^d$, $Y:=(\ZZ/p^n\ZZ[G]^d)^\ast$, and $\cT:=H^2(C/p^n)^\ast$ in (\ref{tate seq}).  
We define
$$\langle \cdot,\cdot\rangle_{k,n}^{\rm Nek}: H^1(C_0/p^n)^{(k)} \times H^2(C_0/p^n)^{\ast,(k)} \to \cQ^k.$$
to be the Bockstein pairing $\langle \cdot,\cdot \rangle_k^{\rm Boc}$ in \S \ref{sec bock pairing} for the complex $C/p^n$. This is the ``modulo $p^n$ version" of the pairing (\ref{nek intro}).

%
%
 
 \subsection{Proof of Theorem \ref{main}}\label{sec final}

We now prove Theorem \ref{main}. The key is to use Theorems \ref{compari} and \ref{qithm}. 

We set
$$C_{{\rm PT},\Sigma}:=\left[  {\rm Sel}_{p^n}^\Sigma(E/L) \to \bigoplus_{v\in \Sigma} H^1(L_v, E)[p^n] \right].$$
Note that $H^1(C_{{\rm PT},\Sigma})={\rm Sel}_{p^n}(E/L)$ and $H^2(C_{{\rm PT},\Sigma})={\rm Sel}_{p^n}(E/L)^\ast$ by (\ref{BD seq}). 
Note also that, by the proof of Proposition \ref{prop canonical selmer}, $C_{{\rm PT},\Sigma}$ coincides with $C_{{\rm PT},\fn_\Sigma}$ in Proposition \ref{prop core}(ii) for the Selmer structure $\cF$ attached to $\sF$, where $\fn_\Sigma:=\prod_{v\in \Sigma}v$. 
By Remark \ref{rem symm} and Theorem \ref{compari}, the pairing $\langle \cdot, \cdot\rangle_{k,n}^{\rm BD}$ defined in \S \ref{sec review}  coincides with the pairing $\langle \cdot, \cdot \rangle_{k}^{\rm Boc}$ in \S \ref{sec bock pairing} for $C_{{\rm PT},\Sigma}$. 

By Theorem \ref{qithm}, we know that $C_{{\rm PT},\Sigma}$ is quasi-isomorphic to $C/p^n$. (Note that Hypotheses \ref{hyp tech} and \ref{hyp core} are satisfied by Hypothesis \ref{hyp} and Remark \ref{rem adm core}.) 
Since the pairing $\langle \cdot,\cdot\rangle_{k,n}^{\rm Nek}$ is defined to be $\langle \cdot, \cdot \rangle_{k}^{\rm Boc}$ for $C/p^n$, we have
$$\langle \cdot, \cdot\rangle_{k,n}^{\rm BD} = - \langle \cdot, \cdot\rangle_{k,n}^{\rm Nek}.$$
Here the sign is due to the second commutative diagram in Theorem \ref{qithm}. Since $n$ is arbitrary, this proves Theorem \ref{main}. \qed

\appendix

\section{Bockstein maps}\label{sec bock}

In this appendix, we study Bockstein maps in a general algebraic setting. We consider two generalizations of Bockstein maps: ``derived" and ``generalized" Bockstein maps (see Definitions \ref{def der bock} and \ref{def gen bock}). We relate them in Proposition \ref{prop relate}. 

\subsection{Derived and generalized Bockstein maps}
Let $\cR$ be a (possibly non-commutative) ring. By an $\cR$-module we mean a left $\cR$-module. Let $C = (C^i, d^i)$ be a bounded (cochain) complex of $\cR$-modules. 
Fix a two-sided ideal $\cI \subset \cR$. Then we have a decreasing filtration $\{\cI^i C\}_i$ of $C$:
$$C=\cI^0 C \supset \cI C \supset \cI^2 C\supset \cdots.$$
This filtration defines a spectral sequence starting with
$$E_1^{i,j}= H^{i+j}(\cI^iC/\cI^{i+1}C).$$
Explicitly, this is defined as follows. We set
$$Z_k^{i,j}:= \ker(\cI^i C^{i+j} \xrightarrow{d^{i+j}} C^{i+j+1}/\cI^{i+k} C^{i+j+1}),$$
$$B_k^{i,j}:= \cI^i C^{i+j} \cap d^{i+j-1}(\cI^{i-k} C^{i+j-1}),$$
$$E_k^{i,j} := Z_k^{i,j}/(Z_{k-1}^{i+1,j-1} + B_{k-1}^{i,j}).$$
The differential $d_k^{i,j}: E_k^{i,j}\to E_k^{i+k,j+1-k}$ is defined to be the map induced by $d^{i+j}$. One can check that $E_1^{i,j}\simeq H^{i+j}(\cI^iC/\cI^{i+1}C)$ and $E_{k+1}^{i,j}\simeq \ker d_k^{i,j}/\im d_k^{i-k, j+k-1}$. For details, see \cite[\S 2.2]{mccleary} for example. 

{\it In the following, we assume that $C$ is of the form}
\begin{equation}\label{Cform}
C=[ C^1 \xrightarrow{d}C^2].
\end{equation}
In particular, we have
$$H^i(C)=\begin{cases}
\ker d &\text{ if $i=1$,}\\
\coker d &\text{ if $i=2$},\\
0 &\text{ if $i\neq 1,2$}.
\end{cases}$$
Note also that, since $C^{3}=0$, the map $d_k^{k,2-k}$ is zero for any $k\geq 1$ and so $E_k^{k,2-k}$ is a quotient of $E_1^{k,2-k}=H^2(\cI^kC/\cI^{k+1}C)$.

As in \cite{sanoderived}, we give the following definition. 

\begin{definition}\label{def der bock}
We define the {\it $k$-th derived Bockstein map} to be the differential
$$\beta^{(k)}:=d_k^{0,1}: E_k^{0,1}\to E_k^{k,2-k}.$$
\end{definition}

\begin{remark}
When $k=1$, the map $\beta^{(1)}$ coincides with the usual Bockstein map
$$E_1^{0,1}= H^{1}(C/\cI C) \to H^2(\cI C/ \cI^2 C),$$
which is defined to be the connecting homomorphism associated to the short exact sequence
$$0\to \cI C/ \cI^2 C\to C/\cI^2 C \to C/\cI C\to 0.$$
\end{remark}

In \cite{LLSWW}, another generalization of Bockstein maps is considered. In our abstract setting, it is given as follows. 

\begin{definition}\label{def gen bock}
We define the {\it $k$-th generalized Bockstein map} to be the connecting homomorphism
$$\psi^{(k)} := \delta: H^{1}(C/\cI^k C) \to H^2(\cI^k C/\cI^{k+1} C)$$
associated to the short exact sequence
$$0\to \cI^k C/\cI^{k+1} C \to C/\cI^{k+1} C\to C/\cI^k C\to 0.$$
\end{definition}

Clearly, when $k=1$, the map $\psi^{(1)}$ coincides with the Bockstein map. Thus the maps $\beta^{(1)}$ and $\psi^{(1)}$ are the same. In general, the maps $\beta^{(k)}$ and $\psi^{(k)}$ are related as follows. 

\begin{proposition}\label{prop relate}
There is a commutative diagram:
$$\xymatrix{
H^{1}(C/\cI^k C) \ar[r]^{\psi^{(k)}} \ar@{->>}[d]_\pi& H^2(\cI^k C/\cI^{k+1}C)\ar@{->>}[d]^\rho \\
E_k^{0,1} \ar[r]_{\beta^{(k)}}& E_k^{k,2-k},
}$$
where $\pi$ is given in Lemma \ref{fil surj} below and $\rho$ is the natural surjection $E_1^{k,2-k}\twoheadrightarrow E_k^{k,2-k}$. 
\end{proposition}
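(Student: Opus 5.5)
The proof is a direct cochain-level computation with the explicit description of the spectral sequence recalled above, specialised to $C=[C^1\xrightarrow{d}C^2]$. The plan has three steps: make the map $\pi$ explicit, unwind $\psi^{(k)}$ on cocycles, and compare the two sides.

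\emph{Step 1: the map $\pi$.} In degree one we have
$$Z_k^{0,1}=\{x\in C^1 : d(x)\in \cI^kC^2\},\qquad B_{k-1}^{0,1}=0,\qquad Z_{k-1}^{1,0}=\{x\in \cI C^1 : d(x)\in \cI^kC^2\},$$
since $C^0=0$. Hence
$$E_k^{0,1}=Z_k^{0,1}/Z_{k-1}^{1,0}.$$
On the other side, since $C^0=0$, a class in $H^1(C/\cI^kC)$ is represented by $\bar x\in C^1/\cI^kC^1$ with $d(x)\in \cI^kC^2$. The lift $x\in C^1$ is well defined modulo $\cI^kC^1$, and changing the lift preserves the condition $d(x)\in\cI^kC^2$. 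So
$$H^1(C/\cI^kC)=Z_k^{0,1}/\cI^kC^1.$$
Since $\cI^kC^1\subset Z_{k-1}^{1,0}$, I define $\pi$ as the natural surjection
$$Z_k^{0,1}/\cI^kC^1\twoheadrightarrow Z_k^{0,1}/Z_{k-1}^{1,0}.$$
This is presumably the content of Lemma \ref{fil surj}, and its surjectivity is immediate.

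\emph{Step 2: unwinding $\psi^{(k)}$.} I compute the connecting map with the usual snake recipe. Lift $\bar x$ to $x\in C^1$, viewed in $C^1/\cI^{k+1}C^1$, and apply $d$. This gives $d(x)\in \cI^kC^2/\cI^{k+1}C^2$, and its class in
$$H^2(\cI^kC/\cI^{k+1}C)=\cI^kC^2\big/\bigl(\cI^{k+1}C^2+d(\cI^kC^1)\bigr)$$
is $\psi^{(k)}(\bar x)$. I keep the paper's sign conventions, so no extra sign is introduced here.

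\emph{Step 3: comparison.} The derived Bockstein $\beta^{(k)}$ sends the class of $x$ to the class of $d(x)$ in
$$E_k^{k,2-k}=Z_k^{k,2-k}\big/\bigl(Z_{k-1}^{k+1,1-k}+B_{k-1}^{k,2-k}\bigr),$$
where $Z_k^{k,2-k}=\cI^kC^2$ because $C^3=0$. The denominator is
$$Z_{k-1}^{k+1,1-k}+B_{k-1}^{k,2-k}=\cI^{k+1}C^2+\bigl(\cI^kC^2\cap d(\cI C^1)\bigr).$$
This contains $\cI^{k+1}C^2+d(\cI^kC^1)$, so $\rho$ is the natural quotient map, and $\rho\circ\psi^{(k)}$ and $\beta^{(k)}\circ\pi$ both send $\bar x$ to the class of $d(x)$. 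The diagram therefore commutes.

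The only real obstacle is bookkeeping. I need to check the index conventions in the $E_k$ formulas, namely that the $B_{k-1}$ and $Z_{k-1}$ terms are exactly those written above, and that the identifications $E_1\cong H$ are the naive ones. I also need to check that $\rho$ really is the natural surjection, i.e.\ the stated denominator inclusion holds, which amounts to $d(\cI^kC^1)\subset \cI^kC^2\cap d(\cI C^1)$.
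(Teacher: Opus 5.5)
Your proof is correct and follows essentially the same cochain-level argument as the paper: both represent a class in $H^1(C/\cI^kC)$ by some $a\in Z_k^{0,1}$ and observe that $\rho\circ\psi^{(k)}$ and $\beta^{(k)}\circ\pi$ both send it to the class of $da$ in $E_k^{k,2-k}$. Your direct description of $\pi$ as the quotient map $Z_k^{0,1}/\cI^kC^1\twoheadrightarrow Z_k^{0,1}/Z_{k-1}^{1,0}$ agrees with the map of Lemma \ref{fil surj}, which is built via $H^1(C/\cI C)$ and the snake-lemma identification $E_k^{0,1}=\ker\delta$, and your explicit comparison of denominators spells out what the paper leaves implicit about $\rho$.
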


\begin{lemma}\label{fil surj}
For any $k\geq 1$, there is a natural surjection
$$\pi: H^{1}(C/\cI^k C) \twoheadrightarrow E_k^{0,1}.$$
\end{lemma}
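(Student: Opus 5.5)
We work directly with the explicit formulas for $E_k^{0,1}$ given above. Since $C=[C^1\xrightarrow{d}C^2]$, in bidegree $(0,1)$ we have
$$Z_k^{0,1}=\ker\bigl(C^1\xrightarrow{d} C^2/\cI^kC^2\bigr)=d^{-1}(\cI^kC^2),\qquad Z_{k-1}^{1,0}=\cI C^1\cap d^{-1}(\cI^{k}C^2).$$
Because $C^0=0$, we also have $B_{k-1}^{0,1}=0$. Hence
$$E_k^{0,1}=d^{-1}(\cI^kC^2)\big/\bigl(\cI C^1\cap d^{-1}(\cI^kC^2)\bigr).$$
The complex $C/\cI^kC$ is $[C^1/\cI^kC^1\to C^2/\cI^kC^2]$ and has nothing in degree $0$. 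Therefore
$$H^1(C/\cI^kC)=\ker\bigl(C^1/\cI^kC^1\to C^2/\cI^kC^2\bigr)=d^{-1}(\cI^kC^2)/\cI^kC^1,$$
where we use $\cI^kC^1\subset d^{-1}(\cI^kC^2)$, which holds since $d$ is $\cR$-linear and so $d(\cI^kC^1)\subset\cI^kC^2$.

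The plan is to define $\pi$ as the map induced by the identity on $d^{-1}(\cI^kC^2)$:
$$\pi: d^{-1}(\cI^kC^2)/\cI^kC^1\longrightarrow d^{-1}(\cI^kC^2)\big/\bigl(\cI C^1\cap d^{-1}(\cI^kC^2)\bigr).$$
This is well defined because $\cI^kC^1\subset \cI C^1$ for $k\ge1$, and $\cI^kC^1\subset d^{-1}(\cI^kC^2)$ as just noted. It is then visibly surjective, since both sides are quotients of the same module $Z_k^{0,1}$. Its kernel is $(\cI C^1\cap d^{-1}(\cI^kC^2))/\cI^kC^1$.

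The only point requiring care is the bookkeeping of the indices in $Z$ and $B$ from the definition of the spectral sequence. In particular we must confirm that $B_{k-1}^{0,1}=\cI^0C^1\cap d^0(\cI^{-(k-1)}C^0)$ vanishes, which it does because $C^0=0$, so no further relations appear in the denominator. Naturality of $\pi$ with respect to morphisms of complexes of the form (\ref{Cform}) is then immediate from the construction.
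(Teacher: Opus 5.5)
Your proof is correct, and the map you construct is exactly the $\pi$ the paper uses later, e.g.\ in the proof of Proposition \ref{prop relate}. It sends the class of $a\in d^{-1}(\cI^kC^2)$ modulo $\cI^kC^1$ to its class modulo $Z_{k-1}^{1,0}$.

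The route differs slightly from the paper's. Both arguments start from $E_k^{0,1}=Z_k^{0,1}/Z_{k-1}^{1,0}$, using $B_{k-1}^{0,1}=0$. From there, the paper does not compare two quotients of $Z_k^{0,1}$ directly. Instead, it applies the snake lemma to the morphism induced by $d$ from $0\to\cI C^1\to C^1\to C^1/\cI C^1\to 0$ to $0\to\cI C^2/\cI^kC^2\to C^2/\cI^kC^2\to C^2/\cI C^2\to 0$. This identifies $E_k^{0,1}$ with $\ker\bigl(\delta\colon H^1(C/\cI C)\to H^2(\cI C/\cI^kC)\bigr)$. It then uses the long exact sequence of $0\to\cI C/\cI^kC\to C/\cI^kC\to C/\cI C\to 0$ to see that $H^1(C/\cI^kC)$ surjects onto $\ker\delta$.

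Your version is shorter and exhibits the kernel of $\pi$ explicitly. The paper's detour buys the description $E_k^{0,1}=\ker\delta=\im\bigl(H^1(C/\cI^kC)\to H^1(C/\cI C)\bigr)$ of Remark \ref{rem special}. Lemma \ref{lem spec}(i) relies on this to identify $E_k^{0,1}$ with $\cS_0^{(k)}$ via the norm. That description also follows from your setup in one line. The map $a\mapsto a \bmod \cI C^1$ sends $Z_k^{0,1}$ into $H^1(C/\cI C)$ with kernel $\cI C^1\cap Z_k^{0,1}=Z_{k-1}^{1,0}$.
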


\begin{proof}
Since we assume $C$ is of the form (\ref{Cform}), we have $B_{k-1}^{0,1}=0$. 
Hence we have
\begin{equation}
E_k^{0,1} = Z_k^{0,1}/Z_{k-1}^{1,0}= \{a \in C^{1} \mid da \in \cI^k C^2\}/\{a\in \cI C^{1} \mid da \in \cI^k C^2\} .\label{eexplicit}
\end{equation}
Consider the following commutative diagram with exact rows:
$$
\xymatrix{
0 \ar[r]& \cI C^{1}\ar[r] \ar[d]^d& C^{1} \ar[r]\ar[d]^d& C^{1}/\cI C^{1} \ar[r]\ar[d]^d& 0\\
0 \ar[r]& \cI C^2/\cI^k C^2 \ar[r]& C^2/\cI^k C^2 \ar[r]& C^2/\cI C^2\ar[r] & 0.
}
$$
By the snake lemma, we obtain an exact sequence
$$0\to \{a\in \cI C^{1} \mid da \in \cI^k C^2\}\to \{a\in C^{1} \mid da \in \cI^k C^2\} \to H^{1}(C/\cI C) \xrightarrow{\delta} H^2(\cI C/\cI^k C).$$
Hence we have
\begin{equation}\label{delta surj}
E_k^{0,1}=\ker \delta.
\end{equation}

On the other hand, the short exact sequence
$$0\to \cI C/\cI^k C\to C/\cI^k C\to C/\cI C\to 0$$
induces a long exact sequence
$$\cdots \to H^{1}(C/\cI^k C) \to H^{1}(C/\cI C)\xrightarrow{\delta} H^2(\cI C/\cI^k C)\to \cdots.$$
From this, we obtain a surjection
$$H^{1}(C/\cI^k C)\twoheadrightarrow \ker \delta.$$
Combining this with (\ref{delta surj}), we obtain the desired surjection $H^{1}(C/\cI^k C)\twoheadrightarrow E_k^{0,1}$. 
\end{proof}

\begin{remark}\label{rem special}
The proof of Lemma \ref{fil surj} shows that
$$E_k^{0,1}=\ker \delta= \im (H^{1}(C/\cI^k C) \to H^{1}(C/\cI C)).$$
\end{remark}

We now prove Proposition \ref{prop relate}. 

\begin{proof}[Proof of Proposition \ref{prop relate}]
By the description (\ref{eexplicit}), any element of $E_k^{0,1}$ is represented by an element of $Z_k^{0,1}=\{a \in C^{1} \mid da \in \cI^k C^2\}$. Also, note that $E_k^{k,2-k}$ is a quotient of $\cI^k C^2$. 

Take an element $\overline a \in E_k^{0,1}$ with $a \in Z_k^{0,1}$. By the definition of $\beta^{(k)}$, we have
$$\beta^{(k)}(\overline a) = \overline {da},$$
where $d=d^{1}: Z_k^{0,1}\to \cI^k C^2$ is the differential and $\overline {da}$ is the image of $da \in \cI^k C^2$ in $E_k^{k,2-k}$. Let $a_k \in C^{1}/\cI^k C^{1}$ be the image of $a\in C^{1}$. Note that $a_k \in H^1(C/\cI^k C )$. By the proof of Lemma \ref{fil surj}, we have $\pi( a_k)=\overline a$. By the definition of $\psi^{(k)}$, we have
$$\psi^{(k)}( a_k) = (da)_k,$$
where $(da)_k \in H^2(\cI^kC/ \cI^{k+1}C)$ denotes the image of $da \in \cI^k C^2$. Since we have $\rho((da)_k)= \overline {da}$, we have 
$$\rho \circ \psi^{(k)}( a_k) =\rho((da)_k)=\overline {da}= \beta^{(k)}(\overline a)= \beta^{(k)}\circ \pi ( a_k).$$
Hence we have $\rho \circ \psi^{(k)}=\beta^{(k)} \circ \pi$. This completes the proof. 
\end{proof}

\subsection{Cokernels of Bockstein maps}\label{sec coker}

We study the cokernels of the derived Bockstein map $\beta^{(k)}$ and of the generalized Bockstein map $\psi^{(k)}$. 

The following is proved in \cite[Theorem 2.2.4]{LLSWW}. 

\begin{proposition}\label{coker psi}
For any $k\geq 1$, we have a canonical isomorphism
$$\coker \psi^{(k)}\simeq \cI^k H^2(C)/\cI^{k+1} H^2(C).$$
\end{proposition}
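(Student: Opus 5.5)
We compute the cokernel of $\psi^{(k)}$ directly from the two-term shape (\ref{Cform}) of $C$. Since $C^3=0$, we have $H^2(\cI^kC/\cI^{k+1}C)=\cI^kC^2/(\cI^{k+1}C^2+d(\cI^kC^1))$, and $\psi^{(k)}$ is given explicitly by the same recipe as in the proof of Proposition \ref{prop relate}. A class in $H^1(C/\cI^kC)$ is represented by some $a\in C^1$ with $da\in\cI^kC^2$, and $\psi^{(k)}$ sends it to the class of $da$. Hence
$$\im\psi^{(k)}=\bigl(d(C^1)\cap \cI^kC^2+\cI^{k+1}C^2+d(\cI^kC^1)\bigr)/\bigl(\cI^{k+1}C^2+d(\cI^kC^1)\bigr),$$
and since $d(\cI^kC^1)\subset d(C^1)\cap\cI^kC^2$ we obtain
$$\coker\psi^{(k)}\simeq \cI^kC^2/\bigl(\cI^{k+1}C^2+(d(C^1)\cap\cI^kC^2)\bigr).$$

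On the other side, $H^2(C)=C^2/d(C^1)$, so $\cI^jH^2(C)=(\cI^jC^2+d(C^1))/d(C^1)$ for every $j$. Therefore
$$\cI^kH^2(C)/\cI^{k+1}H^2(C)\simeq(\cI^kC^2+d(C^1))/(\cI^{k+1}C^2+d(C^1)).$$
Consider the natural map $\cI^kC^2\to(\cI^kC^2+d(C^1))/(\cI^{k+1}C^2+d(C^1))$. It is clearly surjective. Its kernel is $\cI^kC^2\cap(\cI^{k+1}C^2+d(C^1))$, and by the modular law this equals $\cI^{k+1}C^2+(\cI^kC^2\cap d(C^1))$, because $\cI^{k+1}C^2\subset\cI^kC^2$. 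This is exactly the submodule computed above, so the two quotients coincide and we get the canonical isomorphism. It is canonical because it is induced by the inclusion $\cI^kC^2\subset C^2$ followed by projection.

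The remaining step is canonicity beyond the chosen two-term representative. We need to check that a quasi-isomorphism of two-term complexes of this form induces compatible maps on both sides. This is formal, since both $\psi^{(k)}$ and the projection $\cI^kC^2\to\cI^kH^2(C)$ are functorial in $C$. Given that, the only delicate point is the explicit description of $\im\psi^{(k)}$ via lifts. We must confirm that every class in $H^1(C/\cI^kC)$ lifts to some $a\in C^1$ with $da\in\cI^kC^2$, and this holds because $C^1\to C^1/\cI^kC^1$ is surjective.
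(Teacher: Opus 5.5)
Your proof is correct, but it takes a different route from the paper's.

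\textbf{The paper's route.} It never works with submodules of $C^2$. From the long exact sequence of $0\to\cI^kC/\cI^{k+1}C\to C/\cI^{k+1}C\to C/\cI^kC\to 0$ it reads off
$\coker\psi^{(k)}\simeq\ker\bigl(H^2(C/\cI^{k+1}C)\to H^2(C/\cI^kC)\bigr)$.
It then uses right exactness of $\cR/\cI^i\otimes_\cR-$, together with $C^3=0$, to get $H^2(C/\cI^iC)\simeq H^2(C)/\cI^iH^2(C)$. That kernel is then visibly $\cI^kH^2(C)/\cI^{k+1}H^2(C)$.

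\textbf{Your route.} You compute both sides as explicit subquotients of $\cI^kC^2$ and match them with the modular law.

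\textbf{Comparison.} The paper's argument is shorter and needs no lifts or cocycle bookkeeping. Yours produces the explicit formula $\coker\psi^{(k)}=\cI^kC^2/(\cI^{k+1}C^2+\cI^kC^2\cap d(C^1))$. This is literally the paper's expression for $E_{k+1}^{k,2-k}=\coker\beta^{(k)}$ in the proof of Proposition \ref{coker beta}. So your computation shows directly that the surjection $\rho$ of Proposition \ref{prop relate} induces an isomorphism $\coker\psi^{(k)}\simeq\coker\beta^{(k)}$. The paper only obtains this indirectly, by proving Propositions \ref{coker psi} and \ref{coker beta} separately.

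\textbf{Your final paragraph.} The discussion of independence from the two-term representative is unnecessary. In the statement $C$ is a fixed complex of the form (\ref{Cform}), and the filtration $\cI^iC$ and $\psi^{(k)}$ are defined on that actual complex, not on its quasi-isomorphism class. So ``canonical'' only means naturality under morphisms of such complexes, which your construction gives immediately.
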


\begin{proof}
By the short exact sequence
$$0\to \cI^k C/\cI^{k+1} C \to C/\cI^{k+1} C\to C/\cI^k C\to 0,$$
we obtain a long exact sequence
$$H^1(C/\cI^k C)\xrightarrow{\psi^{(k)}} H^2(\cI^k C/\cI^{k+1}C)\to H^2(C/\cI^{k+1} C)\to H^2(C/\cI^k C)\to 0.$$
Hence we have
$$\coker \psi^{(k)}\simeq \ker \left(H^2(C/\cI^{k+1} C)\to H^2(C/\cI^k C)\right).$$
Note that for any $i\geq 1$ we have
\begin{eqnarray*}
H^2(C/\cI^i C) &=& \coker \left( C^1/\cI^i C^1\xrightarrow{d^1} C^2/\cI^i C^2\right)\\
&\simeq& \cR/\cI^i \otimes_\cR \coker \left(C^1\xrightarrow{d^1} C^2\right) \\
&\simeq & H^2(C)/\cI^i H^2(C).
\end{eqnarray*}
Therefore, we have
$$\coker \psi^{(k)}\simeq \ker \left(H^2(C)/\cI^{k+1}H^2(C)\to H^2(C)/\cI^k H^2(C)\right) = \cI^k H^2(C)/\cI^{k+1}H^2(C).$$
\end{proof}

It turns out that $\coker \beta^{(k)}$ is the same as $\coker \psi^{(k)}$. 

\begin{proposition}\label{coker beta}
For any $k\geq 1$, we have a canonical isomorphism
$$\coker \beta^{(k)}\simeq \cI^k H^2(C)/\cI^{k+1} H^2(C).$$
\end{proposition}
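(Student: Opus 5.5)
The plan is to compute $\coker\beta^{(k)}$ directly from the explicit description of the spectral sequence for $C=[C^1\xrightarrow{d}C^2]$, reducing it to the cokernel of $\psi^{(k)}$ treated in Proposition \ref{coker psi}. We know $\beta^{(k)}\colon E_k^{0,1}\to E_k^{k,2-k}$, with source described in (\ref{eexplicit}). Since $C^3=0$ and $C^0=0$, the target is
$$E_k^{k,2-k}=Z_k^{k,2-k}/(Z_{k-1}^{k+1,1-k}+B_{k-1}^{k,2-k}),$$
with $Z_k^{k,2-k}=\cI^kC^2$, $Z_{k-1}^{k+1,1-k}=\cI^{k+1}C^2$ and $B_{k-1}^{k,2-k}=\cI^kC^2\cap d(\cI C^1)$. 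Hence
$$E_k^{k,2-k}=\cI^kC^2/\bigl(\cI^{k+1}C^2+(\cI^kC^2\cap d(\cI C^1))\bigr).$$

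The image of $\beta^{(k)}$ consists of the classes of $da$ with $a\in C^1$ and $da\in\cI^kC^2$. It therefore equals the image of $\cI^kC^2\cap d(C^1)$. This gives
$$\coker\beta^{(k)}=\cI^kC^2/\bigl(\cI^{k+1}C^2+(\cI^kC^2\cap dC^1)\bigr),$$
because $\cI^kC^2\cap d(\cI C^1)\subset \cI^kC^2\cap dC^1$.

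Next, identify this with $\cI^kH^2(C)/\cI^{k+1}H^2(C)$, where $H^2(C)=C^2/dC^2$, meaning $C^2/dC^1$. The image of $\cI^kC^2$ in $H^2(C)$ is $\cI^kH^2(C)\cong(\cI^kC^2+dC^1)/dC^1\cong \cI^kC^2/(\cI^kC^2\cap dC^1)$. Similarly $\cI^{k+1}H^2(C)$ corresponds to $(\cI^{k+1}C^2+dC^1)/dC^1$. Taking the quotient gives
$$\cI^kH^2(C)/\cI^{k+1}H^2(C)\cong(\cI^kC^2+dC^1)/(\cI^{k+1}C^2+dC^1)\cong \cI^kC^2/\bigl(\cI^kC^2\cap(\cI^{k+1}C^2+dC^1)\bigr).$$
By the modular law, $\cI^kC^2\cap(\cI^{k+1}C^2+dC^1)=\cI^{k+1}C^2+(\cI^kC^2\cap dC^1)$, since $\cI^{k+1}C^2\subset\cI^kC^2$. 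This matches the computed cokernel, and every step is induced by the identity on $\cI^kC^2$, so the isomorphism is canonical.

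Alternatively, one could use Proposition \ref{prop relate}. Since $\pi$ is surjective and $\rho$ is surjective with kernel the image of $\cI^kC^2\cap d(\cI C^1)$, one gets $\coker\beta^{(k)}=\coker(\rho\circ\psi^{(k)})$. The kernel of $\rho$ already lies in the image of $\psi^{(k)}$, because for $a\in\cI C^1$ with $da\in\cI^kC^2$, the class of $a$ in $C^1/\cI^kC^1$ is a cocycle. Hence $\coker\beta^{(k)}=\coker\psi^{(k)}$, and Proposition \ref{coker psi} finishes the proof.

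The only real care point is the correct identification of $B_{k-1}^{k,2-k}$ and $Z_{k-1}^{k+1,1-k}$ from the indexing conventions. I would present the second argument, checking that containment of $\ker\rho$ in $\im\psi^{(k)}$ explicitly.
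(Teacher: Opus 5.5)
Both of your arguments are correct. The first is essentially the paper's proof. The paper notes $\coker\beta^{(k)}\simeq E_{k+1}^{k,2-k}$ (since $d_k^{k,2-k}=0$) and reads off $E_{k+1}^{k,2-k}=\cI^kC^2/(\cI^{k+1}C^2+\cI^kC^2\cap d(C^1))$ from the explicit $Z$'s and $B$'s. It then identifies $\cI^kC^2/(\cI^kC^2\cap d(C^1))$ with $\cI^kH^2(C)$. Your modular-law step just makes explicit why $\cI^{k+1}C^2$ then corresponds to $\cI^{k+1}H^2(C)$, which the paper leaves implicit.

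The second argument, the one you say you would present, is a genuinely different route. Instead of computing $E_{k+1}^{k,2-k}$, you use the square $\rho\circ\psi^{(k)}=\beta^{(k)}\circ\pi$ of Proposition \ref{prop relate} and the surjectivity of $\pi$ and $\rho$. This shows that $\rho$ induces a surjection $\coker\psi^{(k)}\twoheadrightarrow\coker\beta^{(k)}$ with kernel $(\ker\rho+\im\psi^{(k)})/\im\psi^{(k)}$. You then kill this kernel by observing that $\ker\rho$ is the image of $\cI^kC^2\cap d(\cI C^1)$, while $\im\psi^{(k)}$ is the image of $\cI^kC^2\cap d(C^1)$. (Any $a\in C^1$ with $da\in\cI^kC^2$ gives a class in $H^1(C/\cI^kC)$ because $C^0=0$.)

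This is valid, and it gives something the paper only asserts informally when it says the two cokernels ``turn out to be the same''. You obtain an explicit isomorphism $\coker\psi^{(k)}\xrightarrow{\sim}\coker\beta^{(k)}$ induced by $\rho$. It is compatible with both identifications with $\cI^kH^2(C)/\cI^{k+1}H^2(C)$, since every map involved is induced by the identity on $\cI^kC^2$.

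The price is a dependence on Propositions \ref{prop relate} and \ref{coker psi}, whereas the paper's computation is self-contained. Also, the cochain-level check $\ker\rho\subset\im\psi^{(k)}$ is really the same bookkeeping as the direct computation, so neither route is substantially shorter.
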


\begin{proof}
Since $\beta^{(k)}=d_k^{0,1}$ by definition, we have
$$\coker \beta^{(k)} \simeq E_{k+1}^{k,2-k}.$$
By the explicit description of the spectral sequence, we have
\begin{eqnarray*}
E_{k+1}^{k,2-k} &=& Z_{k+1}^{k,2-k}/(Z_k^{k+1, 1-k} + B_k^{k,2-k}) \\
&=& \cI^k C^2/(\cI^{k+1} C^2 + \cI^k C^2\cap d(C^{1})) \\
&=& \coker (\cI^{k+1} C^2 \to \cI^k C^2/ \cI^k C^2 \cap d(C^{1})).
\end{eqnarray*}
Note that
$$\cI^k C^2/ \cI^k C^2 \cap d(C^{1}) \simeq \im (\cI^k C^2 \to C^2/d(C^{1}))= \cI^k H^2(C).$$
Hence we have
$$E_{k+1}^{k,2-k}\simeq \cI^k H^2(C)/\cI^{k+1}H^2(C).$$
\end{proof}

The following is proved in \cite[Proposition 2.10]{sanoderived}. Our approach gives another proof.  

\begin{corollary}\label{cor sano}
Assume that $\cR$ is a discrete valuation ring with maximal ideal $\cI$. Let $\cF:=\cR/\cI$ be the residue field. 
We set
$$\tau_k:= \dim_\cF(E_{k+1}^{k,2-k}) \text{ and }k_0:=\min\{ k\geq 1 \mid \tau_i=\tau_{i+1} \text{ for all $i\geq k$}\}.$$
Then we have
$$\tau_0\geq \tau_1\geq \cdots \geq \tau_{k_0} =\tau_{k_0+1}= \cdots$$
and 
$$H^2(C) \simeq \cR^{ \tau_{k_0}}\times \prod_{i=1}^{k_0} (\cR/\cI^i)^{\tau_{i-1}-\tau_{i}}.$$
\end{corollary}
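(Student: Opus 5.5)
The plan is to reduce everything to Proposition \ref{coker beta} and the structure theorem for finitely generated modules over a discrete valuation ring. Throughout I assume, as is implicit in the statement, that $C^1$ and $C^2$ are finitely generated $\cR$-modules, so that $H^2(C)=\coker d$ is finitely generated.

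First, since $\beta^{(k)}=d_k^{0,1}$ and $E_{k+1}^{k,2-k}\simeq\coker\beta^{(k)}$, Proposition \ref{coker beta} gives
\[
\tau_k=\dim_\cF\bigl(\cI^kH^2(C)/\cI^{k+1}H^2(C)\bigr)
\]
for $k\geq 1$. For $k=0$ the same computation works directly. Here $E_1^{0,2}=H^2(C/\cI C)\simeq H^2(C)/\cI H^2(C)$, as shown in the proof of Proposition \ref{coker psi}. Hence the formula also holds for $\tau_0$.

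Next, write $H^2(C)\simeq \cR^{r}\oplus\bigoplus_{i\geq1}(\cR/\cI^i)^{m_i}$, with only finitely many $m_i$ nonzero. Since $\cI$ is principal, say generated by $\varpi$, multiplication by $\varpi^k$ identifies $\cI^k(\cR/\cI^i)/\cI^{k+1}(\cR/\cI^i)$ with $\cF$ when $k<i$, and this quotient is $0$ when $k\geq i$. Summing over the summands gives
\[
\tau_k=r+\sum_{i>k}m_i .
\]
From this formula the monotonicity $\tau_0\geq\tau_1\geq\cdots$ is immediate, together with the identity $\tau_{i-1}-\tau_i=m_i$ for all $i\geq1$.

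Finally, $\tau_i=\tau_{i+1}$ holds exactly when $m_{i+1}=0$. Therefore $k_0$ is the least $k\geq1$ such that $m_i=0$ for all $i>k$, which is well defined because only finitely many $m_i$ are nonzero. For $k\geq k_0$ this gives $\tau_k=r$, so $r=\tau_{k_0}$. Substituting $m_i=\tau_{i-1}-\tau_i$ for $1\leq i\leq k_0$ into the decomposition yields the displayed isomorphism. If $H^2(C)$ happens to be free, then $k_0=1$ and the factor $(\cR/\cI)^{\tau_0-\tau_1}$ is trivial, so the formula still holds.

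There is no real obstacle in this argument. The only points needing care are the separate treatment of $\tau_0$, which is not covered by Proposition \ref{coker beta} as stated, and the convention that $k_0\geq1$.
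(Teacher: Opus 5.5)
Your proof is correct and follows the paper's argument. Both combine Proposition \ref{coker beta} with the structure theorem to get $\tau_k=r+\sum_{i>k}m_i$, read off $m_i=\tau_{i-1}-\tau_i$ and $r=\tau_{k_0}$, and identify $k_0$ with the least $l\geq 1$ such that no summand $\cR/\cI^i$ with $i>l$ occurs. The only cosmetic differences are that you deduce monotonicity from this formula rather than from the surjection $\cI^{k-1}H^2(C)/\cI^{k}H^2(C)\twoheadrightarrow \cI^{k}H^2(C)/\cI^{k+1}H^2(C)$, and that you justify the case $\tau_0$ explicitly, which the paper uses implicitly.
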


\begin{proof}
Since there is a natural surjection
$$E_{k}^{k-1,2-(k-1)}\simeq \cI^{k-1} H^2(C)/\cI^{k}H^2(C) \twoheadrightarrow  \cI^k H^2(C)/\cI^{k+1}H^2(C) \simeq E_{k+1}^{k,2-k},$$
we have $\tau_{k-1}\geq \tau_k$. 

By the structure theorem, we can write
$$H^2(C)\simeq \cR^s \times \prod_{i=1}^l (\cR/\cI^i)^{f_i}$$
for some $s\geq 0$, $f_i\geq 0$ and $l\geq 1$. We take $l$ to be minimal. 
By Proposition \ref{coker beta}, we have
$$\tau_k= \dim_\cF( \cI^k H^2(C)/\cI^{k+1}H^2(C)) = s+ f_{k+1} +f_{k+2}+\cdots + f_l. $$
Hence we have
$$\tau_{i-1}-\tau_i = f_i$$
for any $1\leq i\leq l$. We also have $\tau_l= s$. By the minimality of $l$, we have $l=k_0$. This completes the proof. 
\end{proof}

\section{Abstract derived heights}\label{sec abs}

In this appendix, we define two pairings $\langle \cdot,\cdot\rangle_k^{\rm BD}$ and $\langle \cdot,\cdot\rangle_k^{\rm Boc}$ in an abstract setting. The former is a direct generalization of the construction by Bertolini-Darmon in \cite{BD der} (see \S \ref{sec BD height}). The latter is defined by using derived Bockstein maps, based on the idea of Nekov\'a\v{r} in \cite[\S 11.5]{nekovar} (see \S \ref{sec bock pairing}). We prove that these pairings coincide (see Theorem \ref{compari}).

\subsection{Notation}\label{B notation}
We set notations used throughout this appendix. 
Let $p$ be a prime number. Fix a positive integer $n$. Let $G$ be a cyclic group of order $p^n$. We set
$$\cR:=\ZZ/p^n\ZZ[G] \text{ and }\cR_0:=\ZZ/p^n\ZZ.$$
For an $\cR$-module $M$, we set
$$M^\ast:=\Hom_\cR(M,\cR) \text{ and }M_0:=M^G.$$
We sometimes identify $M^\ast$ with $\Hom_{\cR_0}(M,\cR_0)$ via the isomorphism
$$ \Hom_{\cR_0}(M,\cR_0)\xrightarrow{\sim} M^\ast; \ f \mapsto \sum_{\sigma \in G}f(\sigma(-))\sigma^{-1}.$$

Suppose that the following data are given:
\begin{itemize}
\item $\cS,\cT$: finitely generated $\cR$-modules;
\item $X,Y$: free $\cR$-modules of finite rank;
\item an exact sequence
\begin{equation}\label{tate seq}
0\to \cS\to X\xrightarrow{\ell} Y^\ast \to \cT^\ast\to 0.
\end{equation}
\end{itemize}

Let $\cI:=\ker (\cR \twoheadrightarrow \cR_0)$ be the augmentation ideal. 
For $k\geq 1$, we set
$$\cQ^k:=\cI^k/\cI^{k+1}, \ \cS_0^{(k)}:= \cS_0\cap \cI^{k-1}\cS \text{ and }\cT_0^{(k)}:= \cT_0\cap \cI^{k-1} \cT.$$
The following fact is frequently used. 

\begin{lemma}\label{lemQ}
For $1\leq k \leq p-1$, we have
$$\cQ^k\simeq \cR_0.$$
\end{lemma}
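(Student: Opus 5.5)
The isomorphism $\cQ^k\simeq\cR_0$ should come from the standard filtration of the group ring of a cyclic group by powers of its augmentation ideal. Fix a generator $\sigma$ of $G$ and put $x:=\sigma-1$. Then $\cI=x\cR$, because every $\tau-1=\sigma^j-1$ is divisible by $x$ in $\cR$. Hence $\cI^k=x^k\cR$, and $\cR\simeq \cR_0[x]/((1+x)^{p^n}-1)$. I will construct a surjection $\cR_0\to \cQ^k$ and then show it is injective for $k\le p-1$.

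\emph{Surjectivity.} The map $\cR_0\to \cI^k/\cI^{k+1}$, $a\mapsto a x^k \bmod \cI^{k+1}$, is well defined and $\cR_0$-linear. It is surjective because any element $x^k r$ of $\cI^k$ satisfies $x^k r\equiv x^k \varepsilon(r)\bmod \cI^{k+1}$, where $\varepsilon$ denotes the augmentation. This uses only $r-\varepsilon(r)\in\cI$.

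\emph{Injectivity.} Suppose $a x^k\in \cI^{k+1}$, say $a x^k = x^{k+1}s$ in $\cR$. I would pass to the polynomial ring: in $\cR_0[x]$,
\[ a x^k - x^{k+1}s(x) = f(x)\, g(x),\qquad f(x):=(1+x)^{p^n}-1=\sum_{j\ge1}\binom{p^n}{j}x^j, \]
and the goal is $a\equiv 0 \bmod p^n$. Compare coefficients of $x^0,\dots,x^k$. The coefficient of $x^j$ in $f$ is $\binom{p^n}{j}$, and for $1\le j\le p-1$ one has $v_p\binom{p^n}{j}=n$, so these coefficients vanish in $\cR_0=\ZZ/p^n$. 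Hence $f(x)\equiv x^p h(x)$ in $\cR_0[x]$. Therefore $f g$ has all its coefficients in degrees $<p$ equal to zero in $\cR_0$. Since $k\le p-1$, the coefficient of $x^k$ in $fg$ is $0$. The coefficient of $x^k$ on the left side is $a$, so $a=0$ in $\cR_0$.

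This shows that $a x^k\mapsto a$ gives $\cQ^k\simeq\cR_0$; equivalently, $\cQ^k$ is free of rank one over $\cR_0$, generated by the class of $(\sigma-1)^k$. The isomorphism depends on the choice of generator only up to a unit, which is harmless for the statement.

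The only step with any content is the vanishing of the low-degree coefficients of $(1+x)^{p^n}-1$ modulo $p^n$, namely $p^n\mid\binom{p^n}{j}$ for $1\le j\le p-1$. This follows from Kummer's theorem: adding $j$ and $p^n-j$ in base $p$ produces exactly $n$ carries. The bound $k\le p-1$ is needed precisely here, because for $j=p$ one only gets $v_p\binom{p^n}{p}=n-1$.
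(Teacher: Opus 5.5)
Your proof is correct and complete. The paper gives no argument of its own here; it just cites Lemma 2.2 of Bertolini--Darmon's \emph{Derived $p$-adic heights}. So you are supplying a self-contained proof rather than an alternative to one in the text.

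Your injectivity step can be phrased a little more structurally. The congruence $p^n\mid\binom{p^n}{j}$ for $1\le j\le p-1$ says that $(1+x)^{p^n}-1\in x^p\cR_0[x]$. Since $\cI^p=x^p\cR$, this gives
$$\cR/\cI^p\simeq \cR_0[x]/\bigl((1+x)^{p^n}-1,\,x^p\bigr)=\cR_0[x]/(x^p).$$
Hence $\cQ^k\simeq x^k\cR_0[x]/x^{k+1}\cR_0[x]\simeq\cR_0$ for all $1\le k\le p-1$ at once; your coefficient comparison is exactly this.

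Beyond the bare statement, your argument shows that $\cQ^k$ is free of rank one over $\cR_0$ on the class of $(\sigma-1)^k$. This generator is canonical up to the unit $u^k$ if $\sigma$ is replaced by $\sigma^u$. Freeness is what is actually needed later: for instance, the identification $H^2(\cI^kC/\cI^{k+1}C)=H^2(C/\cI C)\otimes_{\cR_0}\cQ^k$ in the proof of Theorem~\ref{compari} relies on it.

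Your remark that $v_p\binom{p^n}{p}=n-1$ also shows the range is sharp. The relation $(1+x)^{p^n}=1$ then forces $p^{n-1}x^p\in\cI^{p+1}$, so $\cQ^p$ is killed by $p^{n-1}$ and is not isomorphic to $\cR_0$.
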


\begin{proof}
See \cite[Lemma 2.2]{BD der}. 
\end{proof}

\subsection{The Bertolini-Darmon pairing}\label{sec BD height}

The aim of this subsection is to define a pairing
\begin{equation}\label{BD pairing}
\langle \cdot,\cdot\rangle_k^{\rm BD}:\cS_0^{(k)} \times \cT_0^{(k)} \to \cQ^k
\end{equation}
for $1\leq k \leq p-1$, following the idea of Bertolini-Darmon \cite{BD der}. 

Fix a generator $\gamma \in G$. We define a ``derivative operator" by
$$D^{(k)}:= (-1)^k \sum_{i=0}^{p^n-1} \binom{i}{k}\gamma^{i-k} \in \cR.$$
Note that $N:=D^{(0)}=\sum_{i=0}^{p^n-1}\gamma^i$ is the norm operator. By computation, one checks that
\begin{equation}\label{der relation}
(\gamma-1)D^{(k)} = D^{(k-1)}.
\end{equation}

\begin{lemma}\label{der lemma}
Let $M$ be a free $\cR$-module. Then for any $1\leq k\leq p-1$ we have
$$D^{(k-1)}M = \ker((\gamma-1)^{k}: M\to M)$$
and 
$$\cI^{k} M = \ker (D^{(k-1)}: M\to M).$$
\end{lemma}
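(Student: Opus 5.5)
Since $M$ is free, we reduce to the case $M=\cR=\ZZ/p^n\ZZ[G]$: both sides of each claimed equality are compatible with direct sums, and each map is multiplication by an element of $\cR$. So it suffices to show that in $\cR$ we have
$$D^{(k-1)}\cR=\mathrm{Ann}_\cR((\gamma-1)^k) \quad\text{and}\quad \cI^k=\mathrm{Ann}_\cR(D^{(k-1)})$$
for $1\le k\le p-1$. We identify $\cR\simeq (\ZZ/p^n\ZZ)[x]/(x^{p^n}-1)$ with $x=\gamma$, and we write $t:=x-1$, so that $\cI=t\cR$ and $\cI^k=t^k\cR$.

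The first step is the inclusions that follow from the relation (\ref{der relation}). Iterating $(\gamma-1)D^{(j)}=D^{(j-1)}$ gives $(\gamma-1)^kD^{(k-1)}=(\gamma-1)N=0$. This yields $D^{(k-1)}\cR\subseteq\ker(t^k)$, and likewise $t^k\cR\subseteq\ker(D^{(k-1)})$.

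The second step is equality, which we get by a counting argument. The ring $\cR$ is a finite Frobenius (Gorenstein) ring, since it is a group ring over $\ZZ/p^n$. For a principal ideal $a\cR$ of such a ring we have $\#\mathrm{Ann}(a)\cdot\#a\cR=\#\cR$, because $\mathrm{Ann}(a)=\ker(\cdot a)$ and $a\cR\simeq\cR/\ker(\cdot a)$. So it is enough to prove one of the two equalities, for example $\ker(D^{(k-1)})=t^k\cR$, together with the order identity $\#D^{(k-1)}\cR=\#\cR/\#t^k\cR$.

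We will prove this order identity by computing $\#D^{(k-1)}\cR=\#\cR/\ker D^{(k-1)}$. The key claim is that $D^{(k-1)}$ equals $u\cdot t^{p^n-k}$ modulo a suitable ideal, where $u$ is a unit; indeed, heuristically $D^{(k-1)}$ behaves like $\binom{x^{p^n}-1}{\,\cdot\,}$ divided by $t^k$. More concretely, we plan to expand in the basis $\{t^j\}$. We have
$$\sum_i\binom{i}{k-1}x^{i-k+1}=\frac{1}{(k-1)!}\frac{d^{k-1}}{dx^{k-1}}\Bigl(\sum_i x^i\Bigr)\cdot(\text{shift}),$$
and over $\ZZ[x]$ the sum $\sum_i x^i$ equals $\bigl((1+t)^{p^n}-1\bigr)/t$. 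For $k-1<p$ the division by $(k-1)!$ is harmless.

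The main obstacle is exactly this step. Over $\ZZ/p^n$ the ring $\cR$ is not a valuation ring, so ``$t$-adic order'' arguments must be handled with care. We would try to avoid explicit coefficients and argue instead by a filtration. Since $D^{(k-1)}$ kills $t^k\cR$, it induces a map $\cR/t^k\cR\to\cR$, and we want this map to be injective. Here $\cR/t^k\cR\simeq(\ZZ/p^n)[t]/(t^k)$, because $t^k$ divides $x^{p^n}-1$ in $\ZZ[t]$ as a monic factorization. Thus we must show: if $f=\sum_{j<k}c_jt^j$ and $fD^{(k-1)}=0$, then $f=0$. Using $t^jD^{(k-1)}=D^{(k-1-j)}$, the condition becomes $\sum_{j<k}c_jD^{(k-1-j)}=0$. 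The elements $D^{(0)},\dots,D^{(k-1)}$ are linearly independent over $\ZZ/p^n$ in the strong sense that the coefficient matrix, with entries $\binom{i}{m}$ read off at the basis elements $\gamma^{i}$ for $i=m,\dots,k-1$, is unitriangular. Hence all $c_j=0$. This gives $\ker D^{(k-1)}=t^k\cR$, and therefore $\#D^{(k-1)}\cR=p^{nk}=\#\ker(t^k)$; the latter holds because $\#t^k\cR=p^{n(p^n-k)}$, as $\cR$ is free over $(\ZZ/p^n)[t]/(t^k)$-adic pieces. Combined with the first inclusion, this proves the first equality. The hypothesis $k\le p-1$ enters only in this step, through Lemma \ref{lemQ} and the invertibility of the relevant binomial pivots modulo $p$.
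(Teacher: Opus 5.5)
Your argument is correct in substance, and it takes a different route from the paper. The paper gives no argument of its own here: it cites Bertolini--Darmon (Lemma 2.3 and Corollary 2.5 of their derived heights paper). You instead give a self-contained proof, in four steps.
\begin{itemize}
\item You reduce to $M=\cR$.
\item Iterating (\ref{der relation}) gives $(\gamma-1)^kD^{(k-1)}=(\gamma-1)N=0$, hence both inclusions.
\item Writing $f\in\cR$ as $\sum_{j<k}c_jt^j+t^kg$ and using $t^jD^{(k-1)}=D^{(k-1-j)}$ reduces $\ker(D^{(k-1)})=t^k\cR$ to the $\ZZ/p^n\ZZ$-linear independence of $D^{(0)},\dots,D^{(k-1)}$.
\item The remaining equality follows from the count $\#D^{(k-1)}\cR=\#\cR/\#t^k\cR=\#\ker(t^k)$.
\end{itemize}
What this buys over the citation is an elementary, checkable proof that shows exactly where $k\le p-1$ is used. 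The citation is shorter but opaque.

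Several justifications need repair, though none affects the structure.

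First, the coefficient of $\gamma^i$ in $D^{(m)}$ is $(-1)^m\binom{i+m}{m}$ for $i\le p^n-1-m$ (and $0$ beyond). So the matrix with entries $\binom{i}{m}$ does not sit at the basis elements $\gamma^i$. It sits at $\gamma^{-1-i}=\gamma^{p^n-1-i}$. There the coefficient of $D^{(m)}$ is $0$ for $m>i$, and for $m\le i<p$ it is
$$(-1)^m\binom{p^n-1-i+m}{m}\equiv\binom{i}{m}\pmod{p^n},$$
because $m!$ is a unit modulo $p^n$. This gives the unitriangular matrix you want.

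Second, $t^k$ does not divide $(1+t)^{p^n}-1$ in $\ZZ[t]$ for $k\ge 2$. It does divide it in $(\ZZ/p^n\ZZ)[t]$ for $k\le p$, because $p^n\mid\binom{p^n}{j}$ for $1\le j<p$. In any case you need neither this nor the explicit orders $p^{nk}$ and $p^{n(p^n-k)}$; the phrase about ``$(\ZZ/p^n)[t]/(t^k)$-adic pieces'' should simply be dropped.

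Third, the Frobenius property is superfluous. In any finite commutative ring, the kernel $K_a$ of multiplication by $a$ satisfies $\#K_a\cdot\#a\cR=\#\cR$.

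Finally, Lemma \ref{lemQ} plays no role. The hypothesis $k\le p-1$ enters in two places:
\begin{itemize}
\item through (\ref{der relation}), since $(\gamma-1)D^{(j)}-D^{(j-1)}=(-1)^j\binom{p^n}{j}\gamma^{p^n-j}$, so the relation at index $j$ amounts to $p^n\mid\binom{p^n}{j}$;
\item through the congruence above that produces the unit pivots.
\end{itemize}
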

\begin{proof}
See \cite[Lemma 2.3 and Corollary 2.5]{BD der}.
\end{proof}

We shall define the pairing (\ref{BD pairing}). Take $s\in \cS_0^{(k)}$ and $t\in \cT_0^{(k)}$. Then there exist $\widetilde s \in \cS$ and $\widetilde t \in \cT$ such that $(\gamma-1)^{k-1}\widetilde s=s$ and $(\gamma-1)^{k-1}\widetilde t = t$. Since we have
$$(\gamma-1)^k \widetilde s = (\gamma-1)s = 0,$$
Lemma \ref{der lemma} implies that there exists $x_s \in X$ such that
$$D^{(k-1)}x_s=\widetilde s \text{ in } X.$$
Here we regard $\cS\subset X$ by the exact sequence (\ref{tate seq}). Similarly, there exists $y_t\in Y$ such that
$$D^{(k-1)}y_t = \widetilde t \text { in }Y. $$
Again, we regard $\cT\subset Y$ by the dual of (\ref{tate seq}). Let $\ell: X\to Y^\ast=\Hom_\cR(Y, \cR)$ be the map in (\ref{tate seq}). Then we have
$$D^{(k-1)}\cdot \ell(x_s)(y_t)=\ell(x_s)(D^{(k-1)}y_t)= \ell(x_s)(\widetilde t)=0.$$
Hence, by Lemma \ref{der lemma}, we have
$$\ell(x_s)(y_t) \in \cI^k.$$
Define the pairing by
$$\langle s,t\rangle_k^{\rm BD}:= \ell(x_s)(y_t) \text{ (mod $\cI^{k+1}$)}.$$
One checks that this is well-defined: it is independent of the choices of $x_s$, $y_t$, and $\gamma$.

\begin{remark}\label{rem symm}
Consider the dual of (\ref{tate seq}):
$$0\to \cT \to Y \xrightarrow{\ell^\ast} X^\ast \to \cS^\ast \to 0.$$
Using this sequence, we can define a pairing
$$(\cdot,\cdot)_k^{\rm BD}: \cT_0^{(k)}\times \cS_0^{(k)} \to \cQ^k$$
exactly in the same way, i.e., 
$$(t,s)_k^{\rm BD}:=\ell^\ast(y_t)(x_s) \text{ (mod $\cI^{k+1}$)}$$
for $s\in \cS_0^{(k)}$ and $t\in \cT_0^{(k)}$. 
Then we have
$$\langle s,t \rangle_k^{\rm BD} = (t,s)_k^{\rm BD}.$$
This follows from the fact that the map $\ell^\ast: Y\to X^\ast$ is given by $\ell^\ast(y)(x)=\ell(x)(y)$ for $x\in X$ and $y\in Y$. 
\end{remark}

\subsection{The Bockstein pairing}\label{sec bock pairing}

In this subsection, we define a pairing
\begin{equation}\label{nek pairing abs}
\langle \cdot,\cdot\rangle_k^{\rm Boc}: \cS_0^{(k)}\times \cT_0^{(k)} \to \cQ^k
\end{equation}
following the idea of Nekov\'a\v{r} \cite[\S 11.5]{nekovar}. 

We consider the complex
$$C:=[X\xrightarrow{\ell} Y^\ast],$$
where $X$ is placed in degree one. By (\ref{tate seq}), we have
$$H^1(C)=\cS \text{ and }H^2(C)=\cT^\ast.$$
As in Appendix \ref{sec bock}, we consider the spectral sequence $E_k^{i,j}$ associated to $C$ and the filtration $\{\cI^i C\}_i$. 
Let
$$\beta^{(k)}:=d_k^{0,1}: E_k^{0,1} \to E_k^{k,2-k}$$
be the $k$-th derived Bockstein map in Definition \ref{def der bock}.

\begin{lemma}\label{lem spec}\
\begin{itemize}
\item[(i)] There is a canonical isomorphism $E_k^{0,1}\simeq \cS_0^{(k)}$.
\item[(ii)] There is a canonical map $\iota_k: E_k^{k,2-k}\to \Hom_{\cR_0}(\cT_0^{(k)}, \cQ^k)$. 
\end{itemize}
\end{lemma}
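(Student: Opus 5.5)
For (i) I will start from the explicit description (\ref{eexplicit}) in the proof of Lemma \ref{fil surj}, now for the complex $C=[X\xrightarrow{\ell}Y^\ast]$:
$$E_k^{0,1}=\{x\in X\mid \ell(x)\in \cI^kY^\ast\}/\{x\in \cI X\mid \ell(x)\in \cI^kY^\ast\}.$$
The natural candidate isomorphism is $\overline{x}\mapsto D^{(k-1)}x$. First I check that it lands in $\cS_0^{(k)}$. If $\ell(x)\in\cI^kY^\ast$, then Lemma \ref{der lemma} applied to the free module $Y^\ast$ gives $\ell(D^{(k-1)}x)=D^{(k-1)}\ell(x)=0$, so $D^{(k-1)}x\in\cS$. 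Moreover $D^{(k-1)}x=(\gamma-1)^{k-1}D^{(0)}$-type manipulations via (\ref{der relation}) show $(\gamma-1)D^{(k-1)}x=D^{(k-2)}x$. More directly, $(\gamma-1)^kD^{(k-1)}=0$ on the free module $X$, by Lemma \ref{der lemma} since $D^{(k-1)}X=\ker(\gamma-1)^k$.

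The target is better handled as follows. I will send $\overline x$ to $s:=(\gamma-1)^{k-1}\widetilde s$ with $\widetilde s:=D^{(k-1)}x$. By (\ref{der relation}), $(\gamma-1)^{k-1}D^{(k-1)}=D^{(0)}=N$, so $s=Nx$, which is $G$-invariant and lies in $\cI^{k-1}\cS$ because $\widetilde s\in\cS$. Hence $s\in\cS_0^{(k)}$.

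Next I check the remaining properties of this map $\overline x\mapsto D^{(k-1)}x\in \cS$, followed by $(\gamma-1)^{k-1}$.

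Well-definedness: if $x\in\cI X$ then $Nx=0$.

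Surjectivity: this is exactly the BD construction. Given $s\in \cS_0^{(k)}$, write $s=(\gamma-1)^{k-1}\widetilde s$. Then $(\gamma-1)^k\widetilde s=0$, so $\widetilde s=D^{(k-1)}x_s$ by Lemma \ref{der lemma}. Then $D^{(k-1)}\ell(x_s)=0$, so $\ell(x_s)\in\cI^kY^\ast$ by Lemma \ref{der lemma}, and $Nx_s=s$.

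Injectivity: if $Nx=0$ with $\ell(x)\in\cI^kY^\ast$, then $x\in\ker N=\cI X$ by Lemma \ref{der lemma} with $k=1$, so $\overline x=0$.

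For (ii), recall that $E_k^{k,2-k}$ is a quotient of $\cI^kY^\ast$, namely $\cI^kY^\ast$ modulo $\cI^{k+1}Y^\ast+\ell(Z_{k-1}^{1,0})$-type terms, i.e. modulo $\cI^{k+1}Y^\ast+\ell(\{x\in\cI X\mid\ell(x)\in\cI^kY^\ast\})$. Given $f\in\cI^kY^\ast$ and $t\in\cT_0^{(k)}$, I choose $y_t\in Y$ with $D^{(k-1)}y_t=\widetilde t$, exactly as in \S\ref{sec BD height}, and set $\iota_k(\overline f)(t):=f(y_t)\bmod\cI^{k+1}$. This value lies in $\cI^k$ because $f\in\cI^kY^\ast$.

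The verification of (ii) has three parts.

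Independence of $y_t$: $y_t$ is determined modulo $\ker D^{(k-1)}=\cI^kY$, and $f(\cI^kY)\subset\cI^{2k}\subset\cI^{k+1}$.

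Independence of $\widetilde t$: it is determined modulo $\ker(\gamma-1)^{k-1}$ on $\cT$. This is the step I expect to be the main obstacle, since $\cT$ is not free. I will handle it by the same device as for $y_t$: a change of $\widetilde t$ changes $y_t$ by an element $y'$ with $D^{(k-1)}y'\in\ker(\gamma-1)^{k-1}$. Writing $D^{(k-1)}y'=D^{(k-2)}y''$ inside $Y$, one should get $y'\in(\gamma-1)Y+\cI^kY$, hence $f(y')\in\cI^{k+1}$.

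Vanishing on the relations: for $f=\ell(x)$ with $x\in\cI X$ and $\ell(x)\in\cI^kY^\ast$, I compute $f(y_t)=\ell(x)(y_t)$. Writing $x=(\gamma-1)x'$, this becomes $\ell(x')((\gamma^{-1}-1)y_t)$. Since $\ell(x')$ kills $\cT\ni\widetilde t$, and $(\gamma-1)D^{(k-1)}y_t$ relates to $D^{(k-2)}$, an induction on the $\cI$-adic order (using $\cI^k\cap\ker$ arguments from Lemma \ref{der lemma}) gives membership in $\cI^{k+1}$.

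Finally, $\cR_0$-linearity in $t$ follows from the construction.
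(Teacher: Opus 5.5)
Your proposal is correct. Part (i) is the paper's argument in different clothing. The paper identifies $E_k^{0,1}$ with $\im(H^1(C/\cI^kC)\to H^1(C/\cI C))$ and applies $N\colon H^1(C/\cI C)\cong\cS_0$, which is exactly your map $\overline{x}\mapsto Nx$ on the explicit quotient. Both inclusions are proved, as in your argument, from $(\gamma-1)^{k-1}D^{(k-1)}=N$ and Lemma~\ref{der lemma}. Your first paragraph is muddled, but the argument you settle on is complete.

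For (ii) you take a genuinely different route. The paper passes to the dual complex $[Y\xrightarrow{\ell^\ast}X^\ast]$ and identifies its term $F_k^{0,1}$ with $\cT_0^{(k)}$ via (i). It then invokes Nekov\'a\v{r}'s cup product $\cup_k\colon E_k^{k,2-k}\times F_k^{0,1}\to\cQ^k$ from \cite[\S 11.5.3]{nekovar}. You instead write down the cochain formula $\iota_k(\overline f)(t)=f(y_t)\bmod\cI^{k+1}$ and verify it by hand. Your checks go through, and two of them can be shortened.

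First, since $f(\cI Y)\subset\cI^{k+1}$, the value depends only on $y_t\bmod\cI Y$. The identity $Ny_t=(\gamma-1)^{k-1}D^{(k-1)}y_t=t$ shows this class is $N^{-1}(t)$ under $N\colon Y/\cI Y\cong Y_0$. That gives independence of $y_t$, of $\widetilde t$, and also of $\gamma$ (which you did not address) in one stroke.

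Second, the vanishing on relations, which you leave as a vague ``induction'', is one line. We have $D^{(k-1)}\cdot\ell^\ast(y_t)=\ell^\ast(\widetilde t)=0$, so $\ell^\ast(y_t)\in\cI^kX^\ast$ by Lemma~\ref{der lemma}. Hence $\ell(x)(y_t)=\ell^\ast(y_t)(x)\in\cI^{k+1}$ for every $x\in\cI X$.

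Your route gives a self-contained explicit $\iota_k$. It makes the right-hand vertical arrow in the diagram of the proof of Theorem~\ref{compari} (restriction from $\Hom_{\cR_0}(\cT_0,\cQ^k)$) immediate; the paper uses that compatibility without spelling it out.

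The paper's route makes $\iota_k$ Nekov\'a\v{r}'s cup product by construction. That is what makes $\langle\cdot,\cdot\rangle_k^{\rm Boc}$ for $C/p^n$ literally Nekov\'a\v{r}'s pairing in Theorem~\ref{main}. With your definition, that identification would have to be checked separately. It amounts to noting that your $y_t$ lies in $Z_k^{0,1}$ of the dual complex and represents the class of $t$, and that $\cup_k$ is induced by evaluation $(f,y)\mapsto f(y)$ on cochains.
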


\begin{proof}
Let $\gamma \in G$ be a generator. Since $X$ and $Y$ are free $\cR$-modules, the norm operator $N:=\sum_{i=0}^{p^n-1}\gamma^i$ induces an isomorphism
$$N: C/\cI C=[X/\cI X\to Y^\ast/\cI Y^\ast] \xrightarrow{\sim} [X_0 \to (Y^\ast)_0].$$
In particular, it induces an isomorphism
$$N: H^1(C/\cI C) \xrightarrow{\sim} \cS_0.$$
Let $\pi: H^1(C/\cI^k C)\to H^1(C/\cI C)$ be the natural map. 
By Remark \ref{rem special}, we have
$$E_k^{0,1}= \im \pi.$$
To prove (i), it is sufficient to show that
$$N (\im \pi) = \cS_0\cap \cI^{k-1} \cS. $$
We first show that the left hand side is contained in the right hand side. Take an element $a \in \im \pi$. Then there exists $\widetilde a \in H^1(C/\cI^k C)$ such that $\pi(\widetilde a)=a$. Regard $\widetilde a \in X/\cI^k X$ and take its lift $x_a \in X$. Note that the norm map $N: X\to X$ factors through $X/\cI X$. By the relation $(\gamma-1)^{k-1} D^{(k-1)} =N$ (see (\ref{der relation})), we have
$$Na = (\gamma-1)^{k-1}D^{(k-1)} x_a.$$
By Lemma \ref{der lemma}, note that the map $D^{(k-1)}: X\to X$ factors through $X/\cI^k X$. 
Since $\widetilde a$ lies in $H^1(C/\cI^k C)$, one sees that $D^{(k-1)}x_a \in H^1(C)=\cS$. Therefore, we have
$$Na \in (\gamma-1)^{k-1} \cS.$$
This shows that $N (\im \pi) \subset \cS_0\cap \cI^{k-1} \cS$. Conversely, take an element $s \in \cS_0\cap \cI^{k-1}\cS$. Then there exists $\widetilde s \in \cS$ such that $(\gamma-1)^{k-1}\widetilde s =s$. Since we have
$$(\gamma-1)^k \widetilde s = (\gamma-1)s=0,$$
Lemma \ref{der lemma} implies that there exists $x_s \in X$ such that $D^{(k-1)}x_s =\widetilde s$. Let $\overline x_s \in X/\cI^k X$ be the image of $x_s$. Since $D^{(k-1)}\ell(\overline x_s)=\ell(D^{(k-1)}x_s)=\ell(\widetilde s)=0$ and $D^{(k-1)}$ is injective on $Y^\ast/\cI^k Y^\ast$ by Lemma \ref{der lemma}, we have $\overline x_s \in H^1(C/\cI^k C)$. Also, we have $N(\pi(\overline x_s))= (\gamma-1)^{k-1}D^{(k-1)}x_s=(\gamma-1)^{k-1}\widetilde s=s$. This shows that $N (\im \pi) \supset \cS_0\cap \cI^{k-1} \cS$. Thus we have proved the equality $N (\im \pi)= \cS_0\cap \cI^{k-1} \cS$. This completes the proof of (i).

To prove (ii), consider the dual complex
$$D:= [Y\xrightarrow{\ell^\ast} X^\ast].$$
(We identify $Y^{\ast \ast}$ with $Y$.) Then we have $H^1(D)=\cT$. Let $F_k^{i,j}$ be the spectral sequence associated to $D$ and the filtration $\{\cI^i D\}_i$. Then (i) implies that
$$F_k^{0,1}\simeq \cT_0^{(k)}.$$
On the other hand, as in \cite[\S 11.5.3]{nekovar}, we have a canonical cup product 
$$\cup_k:E_k^{k,2-k} \times F_k^{0,1}\to \cQ^k.$$
This induces a map
$$E_k^{k,2-k}\to \Hom_{\cR_0}(F_k^{0,1}, \cQ^k); \ a\mapsto (b\mapsto a\cup_k b).$$
Combining this with $F_k^{0,1}\simeq \cT_0^{(k)}$, we obtain the map in (ii). 
\end{proof}

By Lemma \ref{lem spec}, we have an identification $E_k^{0,1}= \cS_0^{(k)}$ and a map $\iota_k: E_k^{k,2-k}\to \Hom_{\cR_0}(\cT_0^{(k)}, \cQ^k)$. We define the pairing (\ref{nek pairing abs}) by
$$\langle s,t\rangle_k^{\rm Boc}:= \iota_k(\beta^{(k)}(s) )(t) \in \cQ^k.$$


\subsection{Comparison}

We now show that two pairings defined above are the same.

\begin{theorem}\label{compari}
The pairings $\langle \cdot,\cdot\rangle_k^{\rm BD}$ and $\langle \cdot,\cdot \rangle_k^{\rm Boc}$ coincide. 
\end{theorem}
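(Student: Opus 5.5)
The plan is to compute both pairings on explicit representatives and check that they agree. Fix $s\in\cS_0^{(k)}$ and $t\in\cT_0^{(k)}$. Choose $\widetilde s,\widetilde t$, $x_s\in X$, $y_t\in Y$ as in \S\ref{sec BD height}, so that $(\gamma-1)^{k-1}\widetilde s=s$ and $D^{(k-1)}x_s=\widetilde s$, and likewise for $t$.

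The first task is to locate $s$ inside $E_k^{0,1}$ under the isomorphism of Lemma \ref{lem spec}(i). The proof of that lemma shows that the class $\overline x_s\in X/\cI^kX$ lies in $H^1(C/\cI^kC)$ and satisfies $N(\pi(\overline x_s))=s$. Therefore the element of $E_k^{0,1}$ corresponding to $s$ is represented, in the description (\ref{eexplicit}), by $x_s\in Z_k^{0,1}=\{a\in X\mid \ell(a)\in \cI^kY^\ast\}$. Indeed, $D^{(k-1)}\ell(x_s)=\ell(\widetilde s)=0$, so Lemma \ref{der lemma} gives $\ell(x_s)\in\cI^kY^\ast$. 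By the description of $\beta^{(k)}$ given in the proof of Proposition \ref{prop relate}, we then have $\beta^{(k)}(s)=\overline{\ell(x_s)}$, namely the image of $\ell(x_s)\in\cI^kY^\ast$ in $E_k^{k,2-k}$.

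The second task is to make the cup product $\cup_k:E_k^{k,2-k}\times F_k^{0,1}\to\cQ^k$ explicit. By the same reasoning applied to the dual complex $D$, the element $t$ corresponds to the class of $y_t\in Y$ in $F_k^{0,1}$. At the cochain level, Nekov\'a\v{r}'s cup product from \cite[\S 11.5.3]{nekovar} is induced by the evaluation pairing $Y^\ast\times Y\to\cR$. This pairing sends $\cI^iY^\ast\times\cI^jY$ into $\cI^{i+j}$, and reducing modulo $\cI^{k+1}$ gives $\overline{f}\cup_k\overline{y}=f(y)\bmod \cI^{k+1}$ for $f\in\cI^kY^\ast$ and $y\in Z_k^{0,1}(D)$. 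I would check that this is well defined, meaning it vanishes on $\cI^{k+1}Y^\ast$, on $\cI^kY^\ast\cap\ell(X)$, and on $Z_{k-1}^{1,0}(D)$. For the second of these, one uses $\ell(x)(y)=\ell^\ast(y)(x)$ together with $\ell^\ast(y)\in\cI^kX^\ast$.

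Granting that formula, we get
\[
\langle s,t\rangle_k^{\rm Boc}=\iota_k(\beta^{(k)}(s))(t)=\ell(x_s)(y_t) \bmod \cI^{k+1}=\langle s,t\rangle_k^{\rm BD},
\]
which is exactly the claimed equality.

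The main obstacle is matching conventions in the second task. One has to confirm that Nekov\'a\v{r}'s $\cup_k$, restricted to this two-term setting, really is the naive evaluation pairing with no extra sign or twist. In particular, the identification of $\Hom_{\cR_0}$ with $\Hom_{\cR}$ via $f\mapsto\sum_\sigma f(\sigma(-))\sigma^{-1}$, and the identification $Y^{\ast\ast}=Y$, must both be compatible with the way $F_k^{0,1}\simeq\cT_0^{(k)}$ is obtained through the norm map $N$. If the definition in \cite{nekovar} is phrased in terms of connecting maps of the filtration, I would unwind it at the level of $C/\cI^{k+1}C$ and verify directly that it reduces to evaluation. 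I would also check independence from the choice of lifts, which reduces to the well-definedness already asserted for both pairings.
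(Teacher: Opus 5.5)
Your computation is essentially the paper's. The paper reaches $\beta^{(k)}(s)=\overline{\ell(x_s)}$ by way of the generalized Bockstein map $\psi^{(k)}$ and Proposition \ref{prop relate}: it computes $\psi^{(k)}(\overline x_s)$ as a snake-lemma image and then evaluates at $y_t$. You instead read the same class off (\ref{eexplicit}) directly, which is exactly the computation inside the proof of Proposition \ref{prop relate}. The convention point you single out is also taken for granted in the paper. It asserts that, under $H^2(\cI^kC/\cI^{k+1}C)=\Hom_{\cR_0}(\cT_0,\cQ^k)$, the composite $\iota_k\circ\rho$ is restriction, i.e.\ that $\cup_k$ is induced by evaluation.

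One step of your well-definedness check is wrong as written. The subgroup killed in $E_k^{k,2-k}$ is
\[
Z_{k-1}^{k+1,1-k}+B_{k-1}^{k,2-k}=\cI^{k+1}Y^\ast+\bigl(\cI^kY^\ast\cap\ell(\cI X)\bigr),
\]
not $\cI^{k+1}Y^\ast+\bigl(\cI^kY^\ast\cap\ell(X)\bigr)$. The latter is the denominator of $E_{k+1}^{k,2-k}=\coker\beta^{(k)}$ (see the proof of Proposition \ref{coker beta}).

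The evaluation pairing does not vanish on $\cI^kY^\ast\cap\ell(X)$ in general. Indeed, $\ell(x_s)$ itself lies in that subgroup, so such vanishing would force both pairings to be identically zero. Your justification also only gives $\ell(x)(y)=\ell^\ast(y)(x)\in\cI^k$ for $x\in X$, not $\cI^{k+1}$.

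The correct check takes $x\in\cI X$. Then $\ell^\ast(y)\in\cI^kX^\ast$ yields $\ell^\ast(y)(x)\in\cI^{k+1}$, and this is precisely where the hypothesis $y\in Z_k^{0,1}(D)$ is needed. With that correction the rest of your argument goes through and proves the theorem.
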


\begin{proof}
Take $s \in \cS_0^{(k)}$ and $t\in \cT_0^{(k)}$. As in the definition of $\langle \cdot,\cdot\rangle_k^{\rm BD}$, take $\widetilde s \in \cS$, $x_s \in X$, $\widetilde t \in \cT$, and $y_t\in Y$. Let $\overline x_s \in X/\cI^k X$ be the image of $x_s$. Then the proof of Lemma \ref{lem spec}(i) shows that $\overline x_s \in H^1(C/\cI^k C)$ and $\pi(\overline x_s) = s$, where $\pi: H^1(C/\cI^k C)\twoheadrightarrow E_k^{0,1}=\cS_0^{(k)}$ is the map in Lemma \ref{fil surj}. (Note that, to identify $E_k^{0,1}$ with $\cS_0^{(k)}$, we implicitly use the norm map.) Let 
$$\psi^{(k)}: H^1(C/\cI^k C)\to H^2(\cI^k C/\cI^{k+1}C)$$
be the generalized Bockstein map in Definition \ref{def gen bock}. By Lemma \ref{lemQ}, we have identifications
$$H^2(\cI^k C/ \cI^{k+1} C)= H^2(C/\cI C)\otimes_{\cR_0} \cQ^k = \Hom_{\cR_0}(\cT_0, \cQ^k).$$
By Proposition \ref{prop relate}, we have a commutative diagram
$$\xymatrix{
H^1(C/\cI^k C) \ar[r]^{\psi^{(k)}} \ar@{->>}[d]_\pi & \Hom_{\cR_0}(\cT_0,\cQ^k) \ar@{->>}[d]\\
\cS_0^{(k)} \ar[r]_-{\iota_k\circ \beta^{(k)}} & \Hom_{\cR_0}(\cT_0^{(k)},\cQ^k),
}$$
where the right vertical arrow is the restriction map. From this, we have
\begin{equation}\label{nek compute}
\langle s,t\rangle_k^{\rm Boc} := \iota_k(\beta^{(k)}(s))(t)= \psi^{(k)}(\overline x_s)(t).
\end{equation}

On the other hand, the map $\psi^{(k)}$ is by definition the snake map associated to the diagram
$$
\xymatrix{
0\ar[r] & \cI^k X/\cI^{k+1} X \ar[r] \ar[d] & X/\cI^{k+1} X \ar[r]\ar[d] & X/\cI^k X\ar[r] \ar[d] & 0\\
0\ar[r] &\cI^k Y^\ast/\cI^{k+1}Y^\ast \ar[r] & Y^\ast/\cI^{k+1}Y^\ast \ar[r]& Y^\ast/\cI^k Y^\ast \ar[r] &0,
}
$$
where the vertical maps are induced by $\ell : X\to Y^\ast$. Hence the element $\psi^{(k)}(\overline x_s) $ is represented by
$$\overline {\ell(x_s)} \in Y^\ast/\cI^{k+1}Y^\ast=\Hom_\cR(Y, \cR/\cI^{k+1}).$$
The image of the map $\overline {\ell(x_s)}: Y \to \cR/\cI^{k+1}$ actually lies in $\cQ^k$, so it factors through $Y/\cI Y \simeq Y_0$. Since $y_t \in Y$ is a lift of $t \in \cT_0^{(k)}\subset Y_0$, we have
$$\psi^{(k)}(\overline x_s)(t)= \overline {\ell(x_s)}(y_t) =:\langle s,t \rangle_k^{\rm BD}.$$
By (\ref{nek compute}), we have
$$\langle s,t\rangle_k^{\rm Boc}=\langle s,t\rangle_k^{\rm BD}.$$
\end{proof}

\end{document}